\theoremstyle{plain}
\newtheorem{teo}{Theorem}[section]
\newtheorem{prop}[teo]{Proposition}
\newtheorem{proposition}[teo]{Proposition}
\newtheorem{theorem}[teo]{Theorem}
\newtheorem{cor}[teo]{Corollary}
\newtheorem{lemma}[teo]{Lemma}
\theoremstyle{definition}
\newtheorem{prob}[teo]{Problem}
\newtheorem{definition}[teo]{Definition}
\theoremstyle{remark}
\newtheorem{remark}[teo]{Remark}
\newtheorem{remarks}[teo]{Remarks}
 \newcommand{\Id}{\mathrm{Id}}
\newcommand{\K}{\mathbb{K}}
\newcommand{\R}{\mathbb{R}}
\newcommand{\C}{\mathbb{C}}
\newcommand{\T}{\mathbb{T}}
\newcommand{\Li}{\mathcal{L}}
\DeclareMathOperator{\co}{conv}
\DeclareMathOperator{\conv}{conv}
\DeclareMathOperator{\aconv}{aconv}
\DeclareMathOperator{\lin}{span}
\DeclareMathOperator{\re}{Re}
\DeclareMathOperator{\OF}{OF}
\DeclareMathOperator{\sign}{sign}
\DeclareMathOperator{\exposatx}{ext_x^+}
\DeclareMathOperator{\Lip}{Lip_0}
\DeclareMathOperator{\justLip}{Lip}
\newcommand{\cconv}{\overline{\conv}}
\newcommand{\acconv}{\overline{\aconv}}
\renewcommand{\geq}{\geqslant}
\renewcommand{\leq}{\leqslant}
\newcommand{\pten}{\ensuremath{\widehat{\otimes}_\pi}}
\newcommand{\ptensN}{\ensuremath{\widehat{\otimes}_{\pi,s,N}}}
\newcommand{\ext}{\operatorname{ext}}
\newcommand{\supp}{\operatorname{supp}}
\begin{document}

\title[Daugavet property projective symmetric tensor products]{Daugavet property in projective symmetric tensor products of Banach spaces}

\author[M.~Mart\'in]{Miguel Mart\'in}
\address[Mart\'{\i}n]{Departamento de An\'{a}lisis Matem\'{a}tico, Facultad de Ciencias, Universidad de Granada, E-18071-Granada,
Spain\newline
	\href{http://orcid.org/0000-0003-4502-798X}{ORCID: \texttt{0000-0003-4502-798X} }}
\email[Mart\'{\i}n]{mmartins@ugr.es}
\urladdr{\url{https://www.ugr.es/local/mmartins}}

\author[A.~Rueda Zoca]{Abraham Rueda Zoca}
\address[Rueda Zoca]{Universidad de Murcia, Departamento de Matem\'aticas, Campus de Espinardo 30100 Murcia, Spain} \email{abrahamrueda@ugr.es}
\urladdr{\url{https://arzenglish.wordpress.com}}

\thanks{First author partially supported by projects PGC2018-093794-B-I00 (MCIU/AEI/FEDER, UE), A-FQM-484-UGR18 (Universidad de Granada and Junta de Analuc\'{\i}a/FEDER, UE), and FQM-185 (Junta de Andaluc\'{\i}a/FEDER, UE). Second author partially supported by PGC2018-093794-B-I00 (MCIU/AEI/FEDER, UE), A-FQM-484-UGR18 (Universidad de Granada and Junta de Analuc\'{\i}a/FEDER, UE), and FQM-185 (Junta de Andaluc\'{\i}a/FEDER, UE) }

\date{October 29th, 2020}

\keywords{Daugavet property; polynomial Daugavet property; symmetric tensor product; projective tensor product; $L_1$-preduals}
\subjclass[2020]{Primary 46B04; Secondary 46B20; 46B25; 46B28; 46G25}

\begin{abstract}
We show that all the symmetric projective tensor products of a Banach space $X$ have the Daugavet property provided $X$ has the Daugavet property and either $X$ is an $L_1$-predual (i.e.\ $X^*$ is isometric to an $L_1$-space) or $X$ is a vector-valued $L_1$-space. In the process of proving it, we get a number of results of independent interest. For instance, we characterise ``localised'' versions of the Daugavet property (i.e.\ Daugavet points and $\Delta$-points introduced in \cite{ahlp}) for $L_1$-preduals in terms of the extreme points of the topological dual, a result which allows to characterise a polyhedrality property of real $L_1$-preduals in terms of the absence of $\Delta$-points and also to provide new examples of $L_1$-preduals having the convex diametral local diameter two property. These results are also applied to nicely embedded Banach spaces (in the sense of \cite{wernerpredu}) so, in particular, to function algebras. Next, we show that the Daugavet property and the polynomial Daugavet property are equivalent for $L_1$-preduals and for spaces of Lipschitz functions. Finally, an improvement of recent results in \cite{rtv} about the Daugavet property for projective tensor products is also got.
\end{abstract}

\maketitle

\section{Introduction}

A Banach space $X$ is said to have the \emph{Daugavet property} if every rank-one operator $T\colon X\longrightarrow X$ satisfies the so-called \emph{Daugavet equation}:
\begin{equation*}\label{DE}\tag{\textrm{DE}}
\Vert \Id + T\Vert=1+\Vert T\Vert,
\end{equation*}
where $\Id\colon X\longrightarrow X$ denotes the identity operator (and then the equality actually holds for all weakly compact operators). This property comes from the 1963 work of I.~Daugavet \cite{dau} in which the author proved that every compact operator on $C[0,1]$ satisfies the Daugavet equation. Since then, a big effort has been done in order to give more examples of spaces enjoying this property, and also in order to understand its strong connection with different geometrical properties of Banach spaces (see \cite{ikw,kkw,kssw,kw,shv,werner,woj} and references therein). Let us mention that the list of examples of spaces with the Daugavet property includes $C(K)$ spaces when the compact Hausdorff topological space $K$ is perfect, $L_1(\mu)$ and $L_\infty(\mu)$ when the positive measure $\mu$ is atomless (actually, arbitrary vector valued versions of these three kind of spaces work), and the disk algebra, among others. It is of special interest the celebrated characterisation of the Daugavet property given in \cite[Lemma 2.1]{kssw} in terms of a geometric condition of the slices of the unit ball of the Banach space (see the paragraph after Definition~\ref{def:Daugavet} for details). This characterisation has allowed to obtain big progresses on the Daugavet property by making use of techniques coming from the geometry of Banach spaces. A key application of the theory is that a Banach space with the Daugavet property cannot be embedded into a Banach space with unconditional basis \cite{kssw}, extending the classical result by A.~Pe\l czy\'{n}ski  for $L_1[0,1]$ (and so for $C[0,1]$).

One of the oldest questions that nowadays remains open concerning the Daugavet property (explicitly posed in \cite[Section 6, Question (3)]{werner}) is whether $X\pten Y$ has the Daugavet property if $X$ and $Y$ has the Daugavet property. Actually, the original question also asked if $X\pten Y$ has the Daugavet property if $X$ or $Y$ has the Daugavet property, but it was quickly answered in the negative in \cite[Corollary 4.3]{kkw}  (see \cite{llr2} for a counterexample failing even a weaker property than the Daugavet property). Very recently, in \cite[Theorem 1.2]{rtv}, it has been proved that $X\pten Y$ has the Daugavet property if $X$ and $Y$ are $L_1$-preduals with the Daugavet property. The proof relies on an strengthening of the Daugavet property, that the authors of \cite{rtv} named the operator Daugavet property (see Definition~\ref{def:ODP}), which is satisfied by $L_1$-preduals with the Daugavet property thanks to the possibility of extending compact operators on them, a classical result by J.\ Lindenstrauss \cite{linds}. In the final section of \cite{rtv}, the operator Daugavet property is also applied to give non-trivial examples of symmetric projective tensor products with the Daugavet property. More precisely, it is proved in \cite[Proposition 5.3]{rtv} that $\ptensN C(K)$ has the Daugavet property if $K$ is a compact Hausdorff topological space without isolated points and $N$ is an odd positive integer. In view of the non-symmetric case, it is a natural question (suggested in the paragraph after Remark 5.2 in \cite{rtv}) whether $\ptensN X$ has the Daugavet property if $X$ is an $L_1$-predual with the Daugavet property.

The main aim of this paper is to provide a positive answer to the previous question and also to give completely different examples of symmetric tensor products with the Daugavet property. Actually, as a consequence of the results of Section~\ref{section:WODP}, we obtain the following theorem.

\begin{theorem}\label{theo:tensorsymejemplos}
Let $N\in\mathbb N$. Then, the space $\ptensN X$ has the Daugavet property in the following cases:
\begin{enumerate}[(1)]
    \item when $X$ is an $L_1$-predual with the Daugavet property.
    \item when $X=L_1(\mu,Y)$, for an atomless $\sigma$-finite positive measure $\mu$ and a (non-zero) Banach space $Y$.
\end{enumerate}
\end{theorem}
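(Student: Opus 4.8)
The plan is to factor the proof through an intermediate, operator-type strengthening of the Daugavet property---the weak operator Daugavet property studied in Section~\ref{section:WODP}---and to establish two independent facts: \emph{(A)} if $X$ enjoys this property, then every symmetric projective tensor power $\ptensN X$ has the Daugavet property; and \emph{(B)} both an $L_1$-predual with the Daugavet property and a vector-valued $L_1$-space $L_1(\mu,Y)$ with $\mu$ atomless enjoy it. Granting (A) and (B), the theorem is immediate. The purpose of this intermediate notion is to weaken the operator Daugavet property of Definition~\ref{def:ODP} (which is available for $L_1$-preduals via Lindenstrauss' extension of compact operators but is not expected to hold for $L_1(\mu,Y)$) just enough that it still passes to symmetric tensor powers while simultaneously covering the second class.

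For (A), I would use the slice characterisation of the Daugavet property \cite[Lemma 2.1]{kssw}: it suffices to show that for every slice $S$ of $B_{\ptensN X}$, every norm-one element of the form $u^{\otimes N}$ with $u\in S_X$, and every $\varepsilon>0$, there is $z\in S$ with $\Vert u^{\otimes N}+z\Vert_{\pi,s}>2-\varepsilon$; the reduction to diagonal elementary tensors $u^{\otimes N}$ is legitimate because their absolutely convex hull is dense in $B_{\ptensN X}$. Writing the slice through a norm-one $N$-homogeneous polynomial $P$ and testing $z=v^{\otimes N}$, the goal becomes to produce $v\in S_X$ with $\re P(v)$ close to $1$ together with a norm-one $N$-homogeneous polynomial $Q$ with $Q(u)$ and $Q(v)$ both close to $1$, since $\Vert u^{\otimes N}+v^{\otimes N}\Vert_{\pi,s}=\sup_{\Vert Q\Vert\le 1}\vert Q(u)+Q(v)\vert$. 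The weak operator Daugavet property is precisely what I would use to manufacture such a $v$ and the accompanying $Q$, despite $P$ being polynomial rather than linear; polarisation and the symmetric structure let one pass between the polynomial $P$ and the associated (multi)linear data.

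For (B), the two classes call for different concrete inputs. For an $L_1$-predual $X$ with the Daugavet property I would transfer the richness of its slices through Lindenstrauss' simultaneous extension of compact operators \cite{linds}---the same mechanism that yields the operator Daugavet property in \cite{rtv}---the only point being that the \emph{weak} version survives the passage. For $X=L_1(\mu,Y)$ I would instead argue directly from the geometry of $L_1$: atomlessness of $\mu$ lets one find, inside any prescribed slice, norm-one elements supported on sets of arbitrarily small measure that are nearly disjoint from a fixed element, so that the relevant norms add up to almost $2$; this is a hands-on verification that does not use any $L_1$-predual structure.

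The main obstacle I expect is the symmetric, diagonal nature of the problem for \emph{even} $N$. When $N$ is odd one has $(-v)^{\otimes N}=-v^{\otimes N}$, so the sign-flipping argument that settles the $C(K)$ case in \cite[Proposition 5.3]{rtv} is available; when $N$ is even the map $v\mapsto v^{\otimes N}$ forgets the sign of $v$, so one cannot simply reuse $\pm u$ and must genuinely produce a new near-norming direction $v$ together with a polynomial norming both $u$ and $v$. Calibrating the weak operator Daugavet property so that it is simultaneously strong enough to be inherited by $\ptensN X$ through this polynomial/symmetric bottleneck and weak enough to remain valid for $L_1(\mu,Y)$ is the delicate point of the whole argument.
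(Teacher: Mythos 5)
Your overall architecture --- an intermediate operator-type Daugavet property, its inheritance by $\ptensN X$, and its verification for the two classes --- is exactly the paper's strategy. But there is a genuine gap at the heart of step (A): the WODP of Definition~\ref{def:WODP} only quantifies over \emph{linear} slices of $B_X$, whereas a slice of $B_{\ptensN X}$ is determined by a norm-one $N$-homogeneous polynomial $P$, so to place $v^{\otimes N}$ (or a convex combination of such) in the given slice you must produce $v$ in the \emph{polynomial} slice $\{v\in B_X\colon \re\omega P(v)>1-\alpha\}$ while simultaneously controlling an operator $T$ with $T(v)\approx x'$. Your proposed bridge --- ``polarisation and the symmetric structure let one pass between the polynomial $P$ and the associated (multi)linear data'' --- does not close this gap: polarisation relates $P$ to its symmetric multilinear form only up to the constant $N^N/N!$, which destroys the ``within $\alpha$ of the supremum'' condition, and a polynomial slice is not a union or intersection of linear slices in any usable way. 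The paper resolves this by introducing the \emph{polynomial} WODP (Definition~\ref{defi:polywodp}), proving that it passes to $\ptensN X$ (Theorem~\ref{theo:polynomialWODPimpliesptensNWODP}), and verifying it for both classes (Propositions \ref{prop:WODPpolinomial-predual} and \ref{prop:WODPpolinomial-L1vectorvalued}). That verification is where the real work hides and where your sketch of (B) is also incomplete: for $L_1$-preduals one first needs the polynomial Daugavet property (Theorem~\ref{theorem:polyDPpreduL1}, which rests on the whole Section~\ref{section:L1predDeltapoints} analysis of Daugavet points plus Davie--Gamelin/Aron--Berner arguments), combined with a Santos-type localisation (Lemma~\ref{lemma:Santos}) before Lindenstrauss--Hustad extension is applied; for $L_1(\mu,Y)$ one needs small-support elements almost norming a \emph{polynomial}, which is the polynomial Daugavet property of atomless $L_1$ from \cite{mmp}, not just the linear slice geometry you describe.

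A secondary flaw: your reduction of the test element in the slice characterisation to diagonal tensors $u^{\otimes N}$ ``because their absolutely convex hull is dense in $B_{\ptensN X}$'' is not legitimate. The set of points $x$ satisfying ``for every slice $S$ there is $y\in S$ with $\|x+y\|>2-\varepsilon$'' is the set of Daugavet points, which is neither convex nor stable under passing to closed convex hulls; indeed Section~\ref{section:L1predDeltapoints} discusses spaces where a norming set of $\Delta$-points coexists with points that are not $\Delta$-points. The paper circumvents this by establishing the WODP of $\ptensN X$ itself: the elements $z_1,\dots,z_n$ to be approximately fixed are arbitrary, they are decomposed as convex combinations of elementary tensors, and the single operator $T^N$ approximately fixes all of them at once; the Daugavet property then follows for arbitrary test vectors via Remark~\ref{remark:WODPimpliesDaugavet}. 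On the positive side, you correctly isolated the even-$N$ sign obstruction, which the paper handles with the auxiliary polynomial of Lemma~\ref{lema:polypar}.
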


Note that item (1) extends \cite[Proposition 5.3]{rtv} to general $L_1$-preduals, whereas item (2) provides a different kind of examples of symmetric projective tensor products with the Daugavet property.

In the way to prove the above result, we develop a number of techniques and we get a number of results which are of independent interest. Let us explain the contents of the papers.

We devote Section~\ref{sect:Notation} to give the necessary notation and preliminary results needed for the rest of the paper. Next, in Section~\ref{section:L1predDeltapoints} we make a deep study of the Daugavet property for $L_1$-preduals with the Daugavet property which extends the characterisation given in \cite{bm} (based on the results of \cite{wernerpredu}). Actually, the results are ``localised'' in the sense introduced very recently in \cite{ahlp} of the study of the Daugavet-points and $\Delta$-points in Banach spaces (see Definition~\ref{def:Daugavetanddeltapoints}). We characterise in Theorem~\ref{theo:carapuntodaugapreduL1} these kind of points for an $L_1$-predual in terms of the behaviour of the extreme points of the dual ball, and also in terms of the possibility of getting special $c_0$-sequences in the bidual space. This characterisation generalises previously known results from \cite{ahlp}. The main tool to prove the theorem is the use of $L$-projections techniques, so it is actually true for nicely embedded Banach spaces (Proposition~\ref{prop:nicelyembedded}), in particular, to function algebras. The section ends with a discussion on the relationship between our results and polyhedrality of real $L_1$-preduals and with applications of the results to the convex diametral local diameter two property for $L_1$-preduals (Corollaries \ref{cor:suff-convexDLD2P} and \ref{cor:suff-convexDLD2P-particularcases}) and for nicely embedded Banach spaces (Corollary~\ref{coro:nicellyembedded-convexDLD2P}) so, in particular, for function algebras. These results extend again results from \cite{ahlp} and provide with new examples of Banach spaces with the convex diametral local diameter two property.

Section~\ref{sect:Polynomial} deals with the polynomial Daugavet property, a property stronger than the Daugavet property which requires Eq.~\eqref{DE} to hold for weakly compact polynomials instead of just linear operators (see Definition~\ref{def:polynomialDaugavet}). Using the results of Section~\ref{section:L1predDeltapoints}, we show that $L_1$-preduals with the Daugavet property actually fulfill the polynomial Daugavet property (Theorem~\ref{theorem:polyDPpreduL1}), extending the result from \cite{cgmm} that this happens for $C(K)$ spaces. This result will be a key tool in the way of proving in Section~\ref{section:WODP} item (1) of Theorem~\ref{theo:tensorsymejemplos}. Besides, we include the analogous result to Theorem~\ref{theorem:polyDPpreduL1} for spaces of Lipschitz functions, see Proposition~\ref{prop:Lipschitz}.

Finally, we devote Section~\ref{section:WODP} to the last steps to prove Theorem~\ref{theo:tensorsymejemplos}. We introduce in Definition~\ref{def:WODP} a property called weak operator Daugavet property (WODP), which is (formaly) weaker that the ODP but still implies the Daugavet property.  We show that the WODP is stable by projective tensor products (Theorem~\ref{theorem:WODP}), a promising result in connection with a possible positive answer to \cite[Section 6, Question (3)]{werner}. Observe that this result improves those of \cite{rtv}. Furthermore, we introduce a mix of the WODP and the polynomial Daugavet property, which we call the polynomial weak operator Daugavet property (polynomial WODP in short), see Definition~\ref{defi:polywodp}, which implies both of them. We prove in Propositions \ref{prop:WODPpolinomial-predual} and \ref{prop:WODPpolinomial-L1vectorvalued} that both $L_1$-preduals with the Daugavet property and $L_1(\mu,Y)$, for a non-atomic measure $\mu$ and any non-zero Banach space $Y$, enjoy the polynomial WODP. Finally, in Theorem~\ref{theo:polynomialWODPimpliesptensNWODP} we prove that if $X$ has the polynomial WODP, then for every positive integer $N$ $\ptensN X$ has the WODP (so, in particular, the Daugavet property). Putting all together, we obtain the promised proof of Theorem~\ref{theo:tensorsymejemplos}.

\section{Notation and preliminary results}\label{sect:Notation}
We denote by $\mathbb K$ the scalar field, which will always be either $\mathbb R$ or $\mathbb C$, and the set of modulus one scalars by $\T$. Given a Banach space $X$, we denote the closed unit ball and the unit sphere of $X$ by $B_X$ and $S_X$, respectively. The topological dual of $X$ is denoted by $X^*$. Given a closed convex and bounded subset $C$ of $X$, a \emph{slice of $C$} is the non-empty intersection of $C$ with an open half space. We use the notation
$$
S(C,x^*,\alpha):=\{x\in C\colon \re  x^*(x)>\sup \re  x^*(C)-\alpha\}
$$
where $x^*\in X^*$ and $\alpha>0$. Note that every slice of $C$ can be written in the above form. We write $\ext(C)$ to denote the set of extreme points of $C$. Given a subset $B\subset X$, the convex hull and the absolutely convex hull of $B$ are denoted, respectively, by $\conv(B)$ and $\aconv(B)$. The closure of these two sets is denoted by $\cconv(B)$ and $\acconv(B)$, respectively.

Let us recall the definition of the Daugavet property from \cite{kssw}

\begin{definition}[\textrm{\cite{kssw}}]\label{def:Daugavet}
A Banach space $X$ is said to have the \emph{Daugavet property} if every rank-one operator $T\colon X\longrightarrow X$ satisfies the equation
$$
\Vert \Id+T\Vert=1+\Vert T\Vert,
$$
where $\Id\colon X\longrightarrow X$ denotes the identity operator.
\end{definition}

As commented in the introduction, examples of Banach spaces with the Daugavet property are $C(K)$ spaces when the compact Hausdorff space $K$ has no isolated points, $L_1(\mu)$ when the positive measure $\mu$ has no atoms, the disk algebra, and non-atomic $C^*$-algebras, among many others. We refer the reader to the papers \cite{BM_JFA2005,ikw,kkw,kssw,kw,shv,werner,woj} and references therein for background. The following geometric characterisation of the Daugavet property, given in \cite[Lemma 2.1]{kssw}, is well known and will be freely used throughout the text without any explicit mention.
\begin{quote}
A Banach space $X$ has the Daugavet property if, and only if, for every $\varepsilon>0$, every point $x\in S_X$ and every slice $S$ of $B_X$, there exists a point $y\in S$ such that $\Vert x+y\Vert>2-\varepsilon$.
\end{quote}

Given two Banach spaces $X$ and $Y$, we denote by $\Li(X,Y)$ the space of bounded linear operators $T\colon X\longrightarrow Y$. We denote by $\mathcal B(X,Y)$ the space of bounded bilinear maps $G\colon X\times Y\longrightarrow \mathbb K$. For $N\in \mathbb{N}$, $\mathcal{P}(^N X,Y)$ is the Banach space of $N$-homogeneous scalar-valued continuous polynomials from $X$ into $Y$ and we write $\mathcal{P}(^0 X,Y)$ for the space of constant functions. The space of all $Y$-valued continuous polynomials is then
$$
\mathcal{P}(X,Y):=\left\{\sum_{k=0}^n P_k\colon n\in \mathbb{N},\ P_k\in \mathcal{P}(^k X,Y)\ \forall k=1,\ldots,n\right\}.
$$
Recall that $\mathcal{P}(X,Y)$ is a normed space when endowed with the norm $\|P\|=\sup_{x\in B_X} \|P(x)\|$ for every $P\in \mathcal{P}(X,Y)$.
We simply write $\mathcal{P}(^N X)$ and $\mathcal{P}(X)$ for, respectively, $\mathcal{P}(^N X,\K)$ and $\mathcal{P}(X,\K)$.

Recall that the \emph{projective tensor product} of $X$ and $Y$, denoted by $X\pten Y$, is the completion of the algebraic tensor product $X\otimes Y$ under the norm given by
\begin{equation*}
   \Vert u \Vert :=
   \inf\left\{
      \sum_{i=1}^n  \Vert x_i\Vert\Vert y_i\Vert
      \colon u=\sum_{i=1}^n x_i\otimes y_i
      \right\}.
\end{equation*}
It follows easily from the definition  that $$B_{X\pten Y}=\cconv(B_X\otimes B_Y)
=\cconv(S_X\otimes S_Y).$$
It is well known that
$(X\pten Y)^*=\Li(X,Y^*)=\mathcal{B}(X,Y)$ see \cite[p.~27]{DeFl} for instance. We refer the reader to \cite{DeFl,rya} for a detailed treatment of tensor product spaces.

Given a Banach space $X$, the \emph{($N$-fold) projective symmetric tensor product} of $X$, denoted by
$\widehat{\otimes}_{\pi,s,N} X$, is defined as the completion of the space
$\otimes^{s,N}X$ under the norm
\begin{equation*}
   \Vert u\Vert:=\inf
   \left\{
      \sum_{i=1}^n \vert \lambda_i\vert \Vert x_i\Vert^N \colon
      u:=\sum_{i=1}^n \lambda_i x_i^N,\, n\in\mathbb N,\, x_i\in X
   \right\}.
\end{equation*}
Notice that $B_{\widehat{\otimes}_{\pi,s,N} X} =\acconv\bigl(\bigl\{x^N\colon x\in S_X\bigr\}\bigr)$ and that $\bigl[\widehat{\otimes}_{\pi,s,N} X\bigr]^*=\mathcal P(^N X)$ (see \cite{flo} for background).

A projection $P\colon X\longrightarrow X$ on a Banach space $X$ is said to be an \emph{$L$-projection} if $\|x\|=\|Px\|+\|x-Px\|$ for every $x\in X$. The range of an $L$-projection is called an \emph{$L$-summand}. The following easy result on $L$-projection is surely well known. We include it here as we have not found any concrete reference, although it follows routinely from \cite[Theorem~I.1.10]{HWW}.

\begin{lemma}\label{lemma:L-projections}
Let $Z$ be a Banach space and let $z_1,\ldots,z_n\in S_Z$ pairwise linearly independent elements such that $\K z_k$ is an $L$-summand of $Z$ for every $k=1,\ldots,n$. For each $k\in \{1,\ldots,n\}$, write $P_k$ for the $L$-projection with range $\K z_k$, so $Z=\K z_k\oplus_1 \ker P_k$. Then, $P_kP_j=0$ when $k\neq j$, $P:=P_1+\cdots + P_n$ is an $L$-projection with kernel $\bigcap\nolimits_{k=1}^n \ker P_k$, and $P(Z)\equiv \ell_1^n$ with $B_{P(Z)}=\aconv\bigl(\{p_1,\ldots,p_n\}\bigr)$. In particular, the points $z_1,\ldots,z_n$ are linearly independent.
\end{lemma}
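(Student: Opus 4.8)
The plan is to build everything on the single structural input that any two $L$-projections on a Banach space commute, which is precisely the content of \cite[Theorem~I.1.10]{HWW} (the $L$-projections of $Z$ form a Boolean algebra). Granting this, the first step is to establish the disjointness relations $P_kP_j=0$ for $k\neq j$. Since $P_k$ and $P_j$ commute, the operator $P_kP_j$ is idempotent, and its range satisfies $P_kP_j(Z)\subseteq P_k(Z)=\K z_k$ while, by commutativity, $P_kP_j(Z)=P_jP_k(Z)\subseteq P_j(Z)=\K z_j$. As $z_k$ and $z_j$ are linearly independent for $k\neq j$, we have $\K z_k\cap\K z_j=\{0\}$, whence $P_kP_j=0$.

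Next I would check that $P=P_1+\cdots+P_n$ is a projection: expanding $P^2=\sum_{k,j}P_kP_j$ and using $P_k^2=P_k$ together with the disjointness relations collapses the sum to $\sum_k P_k=P$. To see that $P$ is an $L$-projection I would argue by induction on $n$, the inductive heart being the assertion that if $Q$ and $R$ are $L$-projections with $QR=RQ=0$, then $Q+R$ is an $L$-projection; the induction applies since each partial sum $P_1+\cdots+P_m$ is disjoint from $P_{m+1}$ by the first step. For the two-projection claim, given $x\in Z$, I would first use that $Q$ is an $L$-projection to write $\|x\|=\|Qx\|+\|(\Id-Q)x\|$, then apply the $L$-decomposition for $R$ to $(\Id-Q)x$, noting $R(\Id-Q)=R$ and $(\Id-R)(\Id-Q)=\Id-(Q+R)$, to obtain
\[
\|x\|=\|Qx\|+\|Rx\|+\|x-(Q+R)x\|.
\]
The one point requiring a small argument is the identity $\|Qx+Rx\|=\|Qx\|+\|Rx\|$: writing $u=Qx$ and $v=Rx$ one has $Qu=u$ and $Qv=QRx=0$, so applying the $L$-projection $Q$ to $u+v$ gives $\|u+v\|=\|Q(u+v)\|+\|(u+v)-Q(u+v)\|=\|u\|+\|v\|$. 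Combining these yields $\|x\|=\|(Q+R)x\|+\|x-(Q+R)x\|$. Unwinding the induction, $P$ is an $L$-projection and the same computation delivers the refined identities $\|Px\|=\sum_k\|P_kx\|$ and $\|x\|=\sum_k\|P_kx\|+\|x-Px\|$ for every $x\in Z$.

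Finally, these identities give the remaining claims directly. From $\|Px\|=\sum_k\|P_kx\|$ we read off that $Px=0$ if and only if all $P_kx=0$, i.e.\ $\ker P=\bigcap_{k}\ker P_k$. Since $P_kz_k=z_k$ and $P_jz_k=P_j(P_kz_k)=(P_jP_k)z_k=0$ for $j\neq k$, each $z_k$ satisfies $Pz_k=z_k$, so $z_k\in P(Z)$ and in fact $P(Z)=\lin\{z_1,\ldots,z_n\}$; writing $y=\sum_k\lambda_k z_k\in P(Z)$ we have $P_ky=\lambda_k z_k$, hence $\|y\|=\sum_k|\lambda_k|\,\|z_k\|=\sum_k|\lambda_k|$. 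Thus $y\mapsto(\lambda_1,\ldots,\lambda_n)$ is an isometric isomorphism $P(Z)\equiv\ell_1^n$; in particular $z_1,\ldots,z_n$ are linearly independent, and $B_{P(Z)}=\{\sum_k\lambda_kz_k\colon\sum_k|\lambda_k|\leq1\}=\aconv(\{z_1,\ldots,z_n\})$.

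I expect no deep obstacle here: once the commutativity of $L$-projections is invoked, the first step is immediate and the remaining steps are bookkeeping. The only place demanding genuine care is the additivity of the norm under disjoint $L$-projections (the identity $\|Qx+Rx\|=\|Qx\|+\|Rx\|$), since it is exactly this that upgrades ``$P$ is a projection'' to ``$P$ is an $L$-projection'' and simultaneously produces the $\ell_1^n$ structure of the range.
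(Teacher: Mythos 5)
Your proposal is correct and follows essentially the same route as the paper: both arguments start from the commutativity of $L$-projections (\cite[Theorem~I.1.10]{HWW}) to get $P_kP_j(Z)\subseteq \K z_k\cap \K z_j=\{0\}$, and then deduce the remaining claims from the orthogonality relations. The only difference is that where the paper cites \cite[Theorem~I.1.10]{HWW} a second time to conclude that $P=P_1+\cdots+P_n$ is an $L$-projection, you prove this directly by induction via the norm identity $\|Qx+Rx\|=\|Qx\|+\|Rx\|$ for disjoint $L$-projections; this is a correct and slightly more self-contained rendering of the same argument, and it also makes explicit the identity $\|Px\|=\sum_k\|P_kx\|$ that the paper leaves as ``immediate'' for the $\ell_1^n$ structure of the range.
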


\begin{proof}
First, fix $k,j$ with $k\neq j$ and use that $P_kP_j=P_jP_k$ by \cite[Theorem~I.1.10]{HWW} to get that $P_kP_j(Z)\subset P_k(Z)\cap P_j(Z)=(\K p_k)\cap (\K p_j)$. As $p_k$ and $p_j$ are linearly independent, we get that $P_kP_j(Z)=0$, that is, $P_kP_j=0$. Now, it follows also from \cite[Theorem~I.1.10]{HWW} that $P=P_1+\cdots+P_n$ is an $L$-projection. It is straightforward to show that $\ker P =\bigcap\nolimits_{k=1}^n \ker P_k$ using that the projections are orthogonal. Finally, it is also immediate that $P(Z)\equiv \ell_1^n$ and that $B_{P(Z)}=\aconv\bigl(\{p_1,\ldots,p_n\}\bigr)$.
\end{proof}

By an \emph{$L_1$-predual} we mean a Banach space $X$ such that $X^*\equiv L_1(\mu)$ for certain measure $\mu$. We refer the reader to the book \cite{Lacey} and the seminal paper \cite{linds} for background on these spaces. Also, we refer to \cite{bm,wernerpredu} for background on $L_1$-preduals with the Daugavet property. Recall that the extreme points of the unit ball of an $L_1(\mu)$ space are of the form $\theta \frac{\chi_A}{\mu(A)}$ where $\theta\in \T$ and $A$ is an atom of $\mu$ with $0<\mu(A)<\infty$. It is readily follows that when $f_0\in \ext(B_{L_1(\mu)})$, then $\K f_0$ is an $L$-summand of $L_1(\mu)$. Actually, $L_1(\mu)=\K f_0 \oplus_1 Z$ where $Z$ is just the subspace of those functions of $L_1(\mu)$ whose support do not intersect $A$ and the projection is given by $P(f)=\frac{1}{\mu(A)}\int f\chi_A$ for every $f\in L_1(\mu)$. With this in mind, the particular case of Lemma~\ref{lemma:L-projections} in which $Z=L_1(\mu)$ and $z_1,\ldots,z_n$ are pairwise linearly independent extreme points of $B_Z$ is immediate to prove directly.

\section{Daugavet-points and $\Delta$-points in $L_1$-preduals} \label{section:L1predDeltapoints}

Our main goal in this section is to study $L_1$-preduals with the Daugavet property, showing some characterisations which will be the key in
Section~\ref{sect:Polynomial} to get that they have the polynomial Daugavet property and in Section~\ref{section:WODP} to get the stability of the Daugavet property by symmetric tensor product of them. We need some notation which allows to ``localise'' the Daugavet property in the sense that has been recently done in \cite{ahlp}.

\begin{definition}[\textrm{\cite{ahlp}}]\label{def:Daugavetanddeltapoints}
Given a Banach space $X$, a point $x\in S_X$ is said to be:
\begin{enumerate}[(a)]
    \item a \emph{Daugavet-point} if, for every slice $S$ of $B_X$ and every $\varepsilon>0$ there exists $y\in S$ with $\Vert x-y\Vert>2-\varepsilon$, equivalently, if $$B_X=\cconv\bigl(\{y\in B_X\colon \|x-y\|>2-\varepsilon\}\bigr) \ \ \text{for every $\varepsilon>0$}.$$
    \item a \emph{$\Delta$-point} if, for every $\varepsilon>0$ and every slice $S$ of $B_X$ containing $x$, there exists $y\in S$ with $\Vert x-y\Vert>2-\varepsilon$, equivalently, if $$x\in \cconv\bigl(\{y\in B_X\colon \|x-y\|>2-\varepsilon\}\bigr) \ \ \text{for every $\varepsilon>0$}.$$
\end{enumerate}
\end{definition}

It is clear that a Banach space $X$ has the Daugavet property if, and only if, every element of $S_X$ is a Daugavet-point (see \cite[Corollary 2.3]{werner}). The case that every element of $S_X$ is a $\Delta$-point is known to be equivalent to a property called the diametral local diameter two property, see \cite[Proposition 1.1]{ahlp}. It is immediate that every Daugavet-point is a $\Delta$-point but, in general, a $\Delta$-point does not need to be a Daugavet-point \cite[Example 4.7]{ahlp}. See \cite{ahlp,HallerPirkWeeorg} for background and motivation for the study of Daugavet-points and $\Delta$-points.

Let us start with the following characterisation of the Daugavet-points and $\Delta$-points in $L_1$-preduals. Given a Banach space $X$ and $x\in S_X$, we write
$$
D(x):=\{x^*\in S_{X^*}\colon x^*(x)=1\}=\{x^*\in S_{X^*}\colon \re x^*(x)=1\}
$$
and we write
$$
\exposatx(B_{X^*}):=\{x^*\in \ext(B_{X^*})\colon \re x^*(x)=|x^*(x)|\}.
$$
Observe that $\T \exposatx(B_{X^*})=\ext(B_{X^*})$.

From now on, we consider the set $\exposatx(B_{X^*})$ endowed with the restriction of the weak-start topology. Finally, note that two different elements in $\exposatx(B_{X^*})$ for which the value at $x$ is not zero have to be linearly independent.

\begin{theorem}\label{theo:carapuntodaugapreduL1}
Let $X$ be an $L_1$-predual and $x\in S_X$. Then, the following assertions are equivalent:
\begin{enumerate}
\item[(1)] $x$ is a Daugavet-point.
\item[(2)] $x$ is a $\Delta$-point.
\item[(3)] For every $\varepsilon>0$, the set $$\{e^*\in \exposatx(B_{X^*})\colon \re e^*(x)>1-\varepsilon\}$$
is infinite.
\item[(4)] For every $\varepsilon>0$, the set $$\{e^*\in \ext(B_{X^*})\colon |e^*(x)|>1-\varepsilon\}$$
contains infinitely many pairwise linearly independent elements.
\item[(5)] $D(x)\cap [\exposatx(B_{X^*})]'\neq \emptyset$.
\item[(6)] For every $y\in B_X$ there exists a sequence $\{x_n^{**}\}\subseteq B_{X^{**}}$ satisfying that $\limsup\Vert x-x_n^{**} \Vert=2$ and that
    $$
    \left\|\sum_{k=1}^m \lambda_k (x_k^{**}-y)\right\|\leq 2 \max\bigl\{|\lambda_1|,\ldots,|\lambda_m|\bigr\}
    $$
    for every $m\in \mathbb{N}$ and every $\lambda_1,\ldots,\lambda_m\in \K$ (that is, the linear operator $T$ from $c_0$ to $X^{**}$ defined by $T(e_n) = x_n^{**}-y$ for all $n\in \mathbb{N}$ is continuous).
\item[(7)] For every $y\in B_X$ there exists a sequence $\{x_n^{**}\}\subseteq B_{X^{**}}$ satisfying that $\limsup\Vert x-x_n^{**} \Vert=2$ and that $\{x_n^{**}\}\longrightarrow y$ in the weak-star topology.
\end{enumerate}
\end{theorem}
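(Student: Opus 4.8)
The plan is to run the implications in the cycle $(1)\Rightarrow(2)\Rightarrow(3)\Rightarrow(4)\Rightarrow(3)$, $(3)\Leftrightarrow(5)$, $(3)\Rightarrow(6)\Rightarrow(7)\Rightarrow(1)$, using throughout the identification $X^*\equiv L_1(\mu)$ together with two facts about $L_1$-preduals. First, $X^{**}\equiv L_\infty(\mu)$ and $\ext(B_{X^*})$ consists exactly of the normalised atoms $\theta\chi_A/\mu(A)$ (with $\theta\in\T$ and $A$ an atom), so that $\ext(B_{X^*})=\T\,\exposatx(B_{X^*})$ and every $e^*\in\exposatx(B_{X^*})$ satisfies $e^*(x)=|x(A)|$, where $x(A)$ is the ($\mu$-a.e.\ constant) value of $x$ on the atom $A$. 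Second --- and this is the observation that makes the atomic condition (3) control distances --- for $z\in X$ the map $f^*\mapsto\re f^*(z)$ is weak-star continuous and affine on the weak-star compact convex set $B_{X^*}$, so by Bauer's maximum principle $\|z\|=\max_{e^*\in\ext(B_{X^*})}\re e^*(z)$. Applying this to $z=x-y$ with $x,y\in B_X$ gives
$$\|x-y\|=\sup\bigl\{\,|e^*(x)-e^*(y)|\colon e^*\in\exposatx(B_{X^*})\,\bigr\}.$$

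The core of the argument is $(2)\Rightarrow(3)$, which I would prove by contraposition. If (3) fails, there is $\varepsilon_0>0$ for which $F:=\{e^*\in\exposatx(B_{X^*})\colon\re e^*(x)>1-\varepsilon_0\}=\{e_1^*,\dots,e_n^*\}$ is finite; since this set only shrinks as $\varepsilon_0$ decreases, I may assume $\varepsilon_0$ so small that $(n+2)\varepsilon_0<2$ (and $\|x\|=1$ forces $F\neq\emptyset$, so $n\ge1$). The $e_k^*$ are normalised disjoint atoms, so $x^*:=\frac1n\sum_{k}e_k^*\in S_{X^*}$ and $\re x^*(x)=\frac1n\sum_k|x(A_k)|>1-\varepsilon_0$. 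Fixing $\alpha$ with $1-\re x^*(x)<\alpha<(2-2\varepsilon_0)/n$, the slice $S:=S(B_X,x^*,\alpha)$ contains $x$. For $y\in S$ one has $\frac1n\sum_k\re e_k^*(y)>1-\alpha$, whence $\re e_k^*(y)>1-n\alpha$ for each $k$; this forces $|e_k^*(x)-e_k^*(y)|$ to be small (bounded by $\sqrt{\varepsilon_0^2+2n\alpha}<2-\varepsilon_0$ by the choice of $\alpha$), while for $e^*\in\exposatx(B_{X^*})\setminus F$ one has $|e^*(x)|\le1-\varepsilon_0$ and hence $|e^*(x)-e^*(y)|\le2-\varepsilon_0$. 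By the displayed formula $\|x-y\|\le2-\varepsilon_0$ for all $y\in S$, so $x$ is not a $\Delta$-point.

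The equivalences $(3)\Leftrightarrow(4)\Leftrightarrow(5)$ are routine: passing between $e^*\in\exposatx(B_{X^*})$ and its $\T$-multiples identifies (3) and (4) (distinct atoms give linearly independent extreme points), while a family of normalised atoms with $\re e^*(x)\to1$ has, by weak-star compactness of $B_{X^*}$, an accumulation point $x^*$ with $x^*(x)=1$, giving $(3)\Rightarrow(5)$; conversely a point of $D(x)\cap[\exposatx(B_{X^*})]'$ forces, by the definition of accumulation point, infinitely many elements of $\exposatx(B_{X^*})$ in each half-space $\{\re e^*(x)>1-\varepsilon\}$, giving $(5)\Rightarrow(3)$. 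For $(3)\Rightarrow(6)$ I would work in $X^{**}\equiv L_\infty(\mu)$: condition (3) yields pairwise disjoint atoms $A_n$ with $|x(A_n)|\to1$, and for a given $y\in B_X$ I set
$$x_n^{**}:=y\,\chi_{A_n^{c}}-\tfrac{x(A_n)}{|x(A_n)|}\,\chi_{A_n}\in B_{X^{**}}.$$
Then $\|x-x_n^{**}\|_\infty\ge1+|x(A_n)|\to2$, and since $x_n^{**}-y$ is supported on $A_n$ with $\|x_n^{**}-y\|_\infty\le2$ and the $A_n$ are disjoint, $\bigl\|\sum_{k=1}^m\lambda_k(x_k^{**}-y)\bigr\|_\infty\le2\max_k|\lambda_k|$, which is precisely the required estimate.

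Finally, $(6)\Rightarrow(7)$ is formal: the estimate in (6) says $\sum(x_n^{**}-y)$ is weakly unconditionally Cauchy, so testing against $\phi\in X^{***}$ with unimodular signs gives $\sum_n|\phi(x_n^{**}-y)|\le2\|\phi\|$, hence $\phi(x_n^{**})\to\phi(y)$ and in particular $x_n^{**}\to y$ weak-star. For $(7)\Rightarrow(1)$, given a slice $S=S(B_X,x^*,\alpha)$ and $\varepsilon>0$ I would pick $y\in B_X$ deep in $S$, apply (7), observe that the weak-star convergence $x_n^{**}\to y$ places $x_n^{**}$ eventually in the corresponding slice of $B_{X^{**}}$ while $\|x-x_n^{**}\|$ is close to $2$, and then use Goldstine's theorem to pull a suitable $x_n^{**}$ back to a point of $S\cap B_X$ at distance $>2-\varepsilon$ from $x$ (both $x^*$ and the norming functional of $x-x_n^{**}$ are weak-star continuous on $X^{**}$, and $x\in X$ is fixed). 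The implication $(1)\Rightarrow(2)$ is trivial. I expect the main obstacle to be $(2)\Rightarrow(3)$: the delicate point is choosing $\varepsilon_0$ and the slice width $\alpha$ compatibly, so that $x$ lies in the slice yet every point of it stays at distance $\le2-\varepsilon_0$ from $x$; more conceptually, the key is to recognise that the distance $\|x-y\|$ between predual elements is governed solely by the extreme points (atoms) of $B_{X^*}$ via Bauer's principle, which is what reduces a $\Delta$-point question to the purely combinatorial finiteness condition (3).
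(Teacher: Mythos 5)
Your proposal is correct and follows essentially the same route as the paper: the same cycle of implications, the same slice built from the average of the finitely many almost-norming extreme points for $(2)\Rightarrow(3)$ (your quantitative bound $\sqrt{\varepsilon_0^2+2n\alpha}<2-\varepsilon_0$ checks out), and the same Goldstine/weak-star lower semicontinuity argument for $(7)\Rightarrow(1)$. Your construction of $x_n^{**}$ in $(3)\Rightarrow(6)$, done concretely in $X^{**}\equiv L_\infty(\mu)$ with functions supported on disjoint atoms, is exactly the concrete instantiation of the paper's abstract $L$-projection argument (Lemma~\ref{lemma:L-projections} and Eq.~\eqref{eq:proof-Theorem-L1predual-def-xnstarstar}); the only thing the abstract version buys is that it transfers verbatim to nicely embedded spaces in Proposition~\ref{prop:nicelyembedded}.
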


Note that in the real case, assertion (5) of the theorem above is equivalent to $D(x)\cap \bigl[\ext(B_{X^*})\bigr]'\neq \emptyset$.

\begin{proof}
(1)$\Rightarrow$(2) is obvious.

(2)$\Rightarrow$(3). Assume that (3) does not hold and so that there exists $\varepsilon_0>0$ such that the set $$\{e^*\in \exposatx(B_{X^*})\colon e^*(x)>1-\varepsilon_0\}$$ is finite. Then, there exists extreme points $e^*_1,\ldots, e^*_k$ and $\alpha>0$ such that $e^*_i(x)=1$ for $i=1,\ldots,k$ and $\vert e^*(x)\vert\leq 1-\alpha$ if $e^*\in \ext(B_{X^*})\setminus\T \{e^*_1,\ldots, e^*_k\}$.

Define $g:=\frac{1}{k}\sum_{i=1}^k e^*_i$, which is a norm-one functional as $g(x)=1$. Define $S=S(B_X,g,\frac{\alpha}{2k})$. Pick $y\in S$ and let us estimate $\Vert x-y\Vert$. As $\re g(y)>1-\dfrac{\alpha}{2k}$, a convexity argument gives $$\re e^*_i(y)>1-\frac{\alpha}{2}\ \ \text{ for every $i\in\{1,\ldots, k\}$}.
$$
In particular, $\vert e^*_i(x-y)\vert<\frac{\alpha}{2}$ for every $1\leq i\leq k$. Now, since $\T \exposatx(B_{X^*})=\ext(B_{X^*})$, we have that
\[
\begin{split}
\Vert x-y\Vert&=\sup\left\{ \vert e^*(x-y)\vert\colon e^*\in \exposatx(B_{X^*})\right\}\\ & =\max\left\{\max\limits_{1\leq i\leq k} \vert e^*_i(x-y)\vert,\sup\limits_{e^*\notin \{e^*_1,\ldots, e^*_k\}}\vert e^*(x-y)\vert\right\}\\
& \leq \max \left\{\frac{\alpha}{2}, \sup\limits_{e^*\notin \{e^*_1,\ldots, e^*_k\}}\vert e^*(x)\vert +\vert e^*(y)\vert \right\}\\
& \leq \max\left\{\frac{\alpha}{2}, 1+1-\alpha\right\}\leq 2-\alpha.
\end{split}
\]
Since, clearly, $x\in S$ and $y\in S$ was arbitrary, we get that $x$ is not a $\Delta$-point.

(3)$\Leftrightarrow$(4)$\Leftrightarrow$(5) are immediate.

(3)$\Rightarrow$(6). Pick $y\in B_X$. By the assumption, take an infinite set $\{e^*_n\}\subseteq \exposatx(B_{X^*})$ such that $\re e^*_n(x)>1-\frac{1}{n}$ for all $n\in \mathbb{N}$. Observe that the elements of $\{e_n^*\colon n\in \mathbb{N}\}$ are pairwise linearly independent.  Notice that, since $X^*\equiv L_1(\mu)$ for some positive measure $\mu$, being each $e^*_n$ an extreme point of $B_{X^*}$, we may find an $L$-projection $P_n\colon X^*\longrightarrow X^*$ such that $P_n(X^*)=\K e_n^*$. Now, for every $n\in\mathbb N$ define the linear functional $x_n^{**}\colon X^*=\K e^*_n\oplus_1 \ker P_n\longrightarrow \mathbb K$ by
\begin{equation*}\label{eq:proof-Theorem-L1predual-def-xnstarstar} \tag{$\star$}
x_n^{**}(\lambda e^*_n+z^*)=-\lambda+z^*(y).
\end{equation*}
Notice that, since $\|y\|\leq 1$, we have that
$$
\bigl|x_n^{**}(\lambda e^*_n + z^*)\bigr| \leq |\lambda|+\|z^*\|= \|\lambda e^*_n + z^*\|,
$$
so $x_n^{**}$ is continuous and, moreover, $x_n^{**}\in B_{X^{**}}$. Let us now prove that the sequence $\{x_n^{**}-y\}$ satisfies our requirements. Indeed, pick $m\in\mathbb N$ and $\lambda_1,\ldots, \lambda_m\in \mathbb K$. We consider $P=P_1+\cdots+P_m$ and use Lemma~\ref{lemma:L-projections} to get that $P$ is an $L$-projection, that $X^*=P(X^*)\oplus_1 \ker P$, that $\ker P=\bigcap\nolimits_{k=1}^m \ker P_k$, and that $B_{P(Z)}=\aconv\bigl(\{e^*_1,\ldots,e^*_n\}\bigr)$. With this in mind, taking into account that $x_k^{**}(x^*)-x^*(y)=0$ for $k=1,\ldots,m$ whenever $x^*\in \ker P$, we have that
$$
\left\Vert \sum_{k=1}^{m} \lambda_k (x_k^{**}-y)\right\Vert= \sup\limits_{j=1,\ldots,m}\left\vert \sum_{k=1}^m \lambda_k \bigl(x_k^{**}(e^*_j)-e^*_j(y)\bigr)\right\vert.
$$
But now, as $x_k^{**}(e^*_j)-e_j^*(y)=0$ whenever $k,j\in\{1,\ldots,m\}$ with $k\neq j$, it follows that
\begin{align*}
\left\Vert \sum_{k=1}^{m} \lambda_k (x_k^{**}-y)\right\Vert & = \max\limits_{j=1,\ldots,m} \bigl\vert \lambda_j\bigl(x_j^{**}(e^*_j)- e^*_j(y)\bigr)\bigr\vert \\ & \leq \max\limits_{j=1,\ldots,m} \bigl\vert \lambda_j\bigl(|x_j^{**}(e^*_j)| + |e^*_j(y)|\bigr)\bigr\vert\leq 2\max\limits_{1\leq j\leq n}\vert \lambda_j\vert.
\end{align*}
On the other hand, since $$\Vert x-x_n^{**}\Vert\geq  \bigl|e^*_n(x)-x_n^{**}(e^*_n)\bigr|>2-\frac{1}{n},$$ it follows that $\limsup \Vert x-x_n^{**}\Vert=2$, as desired.

(6)$\Rightarrow$(7). It is immediate since the basis $\{e_n\}$ of $c_0$ converges weakly to $0$ and then, so does $\{T(e_n)\}=\{x_n^{**}-y\}$. A fortiori, $\{x_n^{**}\}$ converges to $y$ in the weak-star topology.

(7)$\Rightarrow$(1). Pick $\varepsilon>0$ and a slice $S=S(B_X,g,\alpha)$ of $B_X$, where $g\in S_{X^*}$ and $\alpha>0$. Pick $y\in S$ and consider, by the assumption, a sequence $\{x_n^{**}\}$ in $S_{X^{**}}$ satisfying that $\limsup\Vert x-x_n^{**}\Vert=2$ and that $x_n^{**}\longrightarrow y$ in the weak-star topology. Since $\re g(y)>1-\alpha$, we may find $n\in \mathbb{N}$ large enough so that
$$
\re x_n^{**}(g)>1-\alpha \quad \text{ and }\quad \Vert x-x_n^{**}\Vert>2-\varepsilon.
$$
Now, by the weak-star denseness of $B_X$ in $B_{X^{**}}$ and the lower weak-star semicontinuity of the norm of $X^{**}$, we may find $z\in B_X$ such that
$$
\re g(z)>1-\alpha \quad \text{ and }\quad \Vert x-z\Vert> 2-\varepsilon.
$$
This proves that $x$ is a Daugavet-point, as desired.
\end{proof}

There are several remarks and consequences of the result above which we would like to state. Let us start by presenting some results which are extended by it.

\begin{remarks}\label{remarks:DaugavetDeltapoints}
\begin{enumerate}[(a)]
  \item Theorem~\ref{theo:carapuntodaugapreduL1} extends \cite[Theorem 3.4]{ahlp}, where the equivalences (1)$\Leftrightarrow$(2)$\Leftrightarrow$(5) was given for $C(K)$ spaces. Observe that assertion (5) for a $C(K)$ space can be  written in terms of accumulation points of $K$ using the well-known equivalence between $K$ and the set of extreme points of the unit ball $C(K)^*$ with the weak-star topology. This is how (4) is given in \cite{ahlp}.
  \item Besides, the fact that Daugavet-points and $\Delta$-points are equivalent for general $L_1$-preduals was already known. It is shown in \cite[Theorem~3.7]{ahlp} with an indirect argument which does not need to characterise them. Indeed, they first proved the result for $C(K)$ spaces (item (a) above) and then translated it to arbitrary $L_1$-preduals by an argument depending on the principle of local reflexivity.
\end{enumerate}
\end{remarks}

Our next comment is that Theorem~\ref{theo:carapuntodaugapreduL1} gives an alternative proof of a characterisation of the Daugavet property for $L_1$-preduals given in \cite{wernerpredu} and \cite{bm}. We need some notation. Given a Banach space $X$, we consider the equivalent relation $f\sim g$ if and only if $f$ and $g$ are linearly independent elements of $\ext(B_{X^*})$ and we endow the quotient space $\ext(B_{X^*})/\sim$ with the quotient topology of the weak-star topology. Observe that $f,g\in \ext(B_{X^*})$ are linearly dependent if and only if $f=\theta g$ for some $\theta\in \T$, so the equivalent class of $f\in \ext(B_{X^*})$ identifies with $\T f$.

\begin{cor}[\textrm{\cite[Theorem 3.5]{wernerpredu} and \cite[Theorem 2.4]{bm}}]
\label{corollary:Daugavet-L1preduals}
Let $X$ be an $L_1$-predual. Then, $X$ has the Daugavet property if and only if $\ext(B_{X^*})/\sim$ does not contain any isolated point.
\end{cor}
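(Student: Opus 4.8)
The plan is to derive Corollary~\ref{corollary:Daugavet-L1preduals} as a direct consequence of the equivalence (1)$\Leftrightarrow$(3) in Theorem~\ref{theo:carapuntodaugapreduL1}, together with the observation (recorded just before Definition~\ref{def:Daugavet}) that $X$ has the Daugavet property if and only if \emph{every} point of $S_X$ is a Daugavet-point. So the whole statement reduces to translating the ``infinitely many extreme points near $x$'' condition from the theorem into the topological condition on the quotient $\ext(B_{X^*})/\sim$, uniformly over all $x \in S_X$.

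First I would fix notation: write $K := \ext(B_{X^*})$ with the weak-star topology and $Q := K/\sim$ with the quotient topology, and recall from the discussion preceding the corollary that the class of $f$ is exactly $\T f$. The key dictionary is the following: for a fixed $x \in S_X$, condition (3) of Theorem~\ref{theo:carapuntodaugapreduL1} says that for every $\varepsilon>0$ the set $\{e^*\in \exposatx(B_{X^*})\colon \re e^*(x)>1-\varepsilon\}$ is infinite. Since $\T\,\exposatx(B_{X^*}) = K$ and distinct elements of $\exposatx(B_{X^*})$ taking a nonzero value at $x$ are linearly independent (noted right after the definition of $\exposatx$), this infiniteness is equivalent to saying that there are infinitely many \emph{pairwise linearly independent} extreme points $e^*$ with $|e^*(x)|$ close to $1$; that is, infinitely many distinct classes in $Q$ accumulating (in the quotient sense) on the class of some $e^* \in D(x)\cap K$. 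I would make this precise by noting that a basic weak-star neighbourhood of an extreme point $e^*_0$ with $\re e^*_0(x)=1$ is, after intersecting with $K$, detected by the single linear condition $\re e^*(x)>1-\varepsilon$ up to the circle action, so $e^*_0$ is a non-isolated point of $Q$ precisely when condition (3) holds at $x$ for that witness $e^*_0$.

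The two directions then run as follows. For the forward implication, if $X$ has the Daugavet property then every $x \in S_X$ is a Daugavet-point, hence satisfies (3); given any class $[e^*_0] \in Q$, pick $x \in S_X$ with $\re e^*_0(x)=1$ (possible since $e^*_0 \in S_{X^*}$, using weak-star density of $B_X$ in $B_{X^{**}}$ to realise the norm, or simply Hahn--Banach/Goldstine to get $x\in S_X$ nearly norming $e_0^*$ and then a limiting argument), and conclude from (3) that infinitely many inequivalent extreme points cluster at $e^*_0$, so $[e^*_0]$ is not isolated in $Q$. For the converse, if $Q$ has no isolated point, I would take an arbitrary $x \in S_X$ and any $e^*_0 \in D(x) \cap K$ (nonempty by the same norming argument), and use non-isolation of $[e^*_0]$ to produce, for each $\varepsilon>0$, infinitely many pairwise linearly independent extreme points in every weak-star neighbourhood of $\T e^*_0$, which after the circle normalisation gives infinitely many $e^* \in \exposatx(B_{X^*})$ with $\re e^*(x)>1-\varepsilon$; thus (3) holds at $x$, every point of $S_X$ is a Daugavet-point, and $X$ has the Daugavet property.

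The main obstacle I anticipate is the careful handling of the circle action $\T$ and the quotient topology when passing between ``$\re e^*(x)$ close to $1$'' (a half-space condition that singles out a representative in $\exposatx(B_{X^*})$) and ``$[e^*]$ close to $[e^*_0]$ in $Q$'' (a condition insensitive to the unimodular factor); one must check that a weak-star neighbourhood basis of $e^*_0$ in $K$ maps, under the quotient map, to a neighbourhood basis of $[e^*_0]$ in $Q$, and that accumulation of classes corresponds exactly to the existence of infinitely many pairwise linearly independent representatives satisfying the half-space inequality. A secondary technical point is guaranteeing, for a given extreme functional $e^*_0$, the existence of some $x \in S_X$ with $e^*_0 \in D(x)$ so that every class of $Q$ is tested by condition (3); this is where I would invoke weak-star density of $B_X$ in $B_{X^{**}}$ together with the fact that an extreme point of $B_{X^*}=B_{L_1(\mu)}$ is weak-star exposed (or simply norm-attained on the bidual) to produce the required norming element. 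Once these identifications are in place, the corollary is immediate from Theorem~\ref{theo:carapuntodaugapreduL1}.
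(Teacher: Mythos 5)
Your ``no isolated points $\Rightarrow$ Daugavet property'' direction is essentially sound and, modulo running it in contrapositive, is the paper's argument: for any $x\in S_X$ the slice $\{x^*\in B_{X^*}\colon \re x^*(x)>1-\varepsilon\}$ \emph{is} a weak-star neighbourhood of any $e_0^*\in D(x)\cap\ext(B_{X^*})$ (such $e_0^*$ exist by Krein--Milman applied to the weak-star compact face $D(x)$ --- no Goldstine or norm-attainment argument is needed there), and non-isolation of $[e_0^*]$ then pushes infinitely many pairwise linearly independent extreme points into that slice, which after rotating into $\exposatx(B_{X^*})$ gives condition (3) of Theorem~\ref{theo:carapuntodaugapreduL1}.

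The other direction has a genuine gap. To show that a given class $[e_0^*]$ is not isolated you fix one $x\in S_X$ with $\re e_0^*(x)=1$ and invoke condition (3) at that $x$; but (3) only says that each slice $S(B_{X^*},x,\varepsilon)$ contains infinitely many pairwise linearly independent extreme points, and the slices determined by a \emph{fixed} $x$ do not form a neighbourhood base of $e_0^*$: the extreme points they contain may accumulate at a different extreme point of the face $D(x)$ and stay uniformly away from $e_0^*$. Concretely, for $X=C(K)$ with $K=\{0\}\cup\{1/n\colon n\in\mathbb N\}$ and $x\equiv 1$, one has $\re\delta_{1/2}(x)=1$ and $\{e^*\in\exposatx(B_{X^*})\colon \re e^*(x)>1-\varepsilon\}=\{\delta_t\colon t\in K\}$ is infinite for every $\varepsilon>0$, yet $[\delta_{1/2}]$ \emph{is} isolated in $\ext(B_{X^*})/\sim$; so the inference ``(3) holds at some $x$ norming $e_0^*$, hence $[e_0^*]$ is not isolated'' is invalid as stated (here $X$ fails the Daugavet property, but your argument never uses the Daugavet hypothesis at any point of $S_X$ other than this one $x$). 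The paper closes exactly this hole by arguing contrapositively with Choquet's lemma: if $[e_0^*]$ is isolated, then the isolating weak-star open set can be upgraded to a weak-star slice $\{x^*\colon\re x^*(x)>1-\varepsilon\}$ meeting $\ext(B_{X^*})$ only inside $\T e_0^*$, because slices form a neighbourhood base at extreme points of the weak-star compact convex set $B_{X^*}$; condition (3) then fails at \emph{that} $x$, so $x$ is not a Daugavet-point. Choquet's lemma is the missing ingredient in your write-up, and using it also dispenses entirely with the need to produce an element of $S_X$ norming a prescribed $e_0^*$ (a fact which your Goldstine-plus-limiting sketch does not establish and which, for $L_1$-preduals, really rests on the $L$-summand structure of $\K e_0^*$).
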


\begin{proof}
Suppose $X$ does not have the Daugavet property. Then, there is $x\in S_X$ which is not a Daugavet-point, so by Theorem~\ref{theo:carapuntodaugapreduL1} there is $\varepsilon_0>0$ such that, writing $W=\{x^*\in B_{X^*}\colon \re x^*(x)>1-\varepsilon_0\}$, we have that $W\cap \exposatx(B_{X^*})$ is finite. Observe that this implies that only finitely many linearly independent extreme points of $B_{X^*}$ belong to $W$, which is an weak-star open set. This shows that $\ext(B_{X^*})/\sim$ contains isolated points.

Conversely, suppose that the equivalent class of $e^*\in \ext(B_{X^*})$ is isolated in $\ext(B_{X^*})/\sim$. That is, there is a weak-star open set $W$ of $X^*$ containing $e^*$ such that
$\ext(B_{X^*})\cap W\subseteq \T e^*$. By Choquet's lemma, we may suppose that $W$ is a weak-star slice, that is, there is $x\in S_X$ and $\varepsilon>0$ such that
$$
\{x^*\in \ext(B_{X^*})\colon \re x^*(x)> 1-\varepsilon\}\subseteq \T e^*.
$$
But this clearly implies that the set
$$
\{x^*\in \exposatx(B_{X^*})\colon \re x^*(x)>1-\varepsilon\}
$$
contains only one element. So, Theorem~\ref{theo:carapuntodaugapreduL1} gives that $x$ is not a Daugavet-point and so $X$ fails the Daugavet property.
\end{proof}

Next, a sight to the proof of Theorem~\ref{theo:carapuntodaugapreduL1} shows that the hypothesis of $X$ being an $L_1$-predual is only used for the implication (3)$\Rightarrow$(6), being the rest of implications true for general Banach spaces.

\begin{remark}\label{remark:algunasvalenengeneral}
Let $X$ be a Banach space and consider the assertions (1) to (7) of Theorem~\ref{theo:carapuntodaugapreduL1}. Then, the following implications hold:
$$
\text{(1)}\Rightarrow \text{(2)} \Rightarrow \text{(3)} \Leftrightarrow \text{(4)} \Leftrightarrow \text{(5)} \qquad \text{ and }\qquad \text{(6)} \Rightarrow \text{(7)} \Rightarrow \text{(1)}.
$$
\end{remark}

Let us observe that (3)$\Rightarrow$(2) does not hold in general (and so, neither does (3)$\Rightarrow$(6)), as $X=\ell_2$ shows.

Our next remark on Theorem~\ref{theo:carapuntodaugapreduL1} is that it is possible to give a version of it for nicely embedded spaces. Let us introduce some notation. Let $S$ be a Hausdorff topological space, and let $C^b(S)$ be the sup-normed Banach space of all bounded continuous scalar-valued functions. For $s\in \Omega$, the funcional $f\longmapsto f(s)$ is denoted by $\delta_s$.

\begin{definition}[\textrm{\cite{wernerpredu}}]\label{def:nicelyembedded}
A Banach space $X$ is \emph{nicely embedded} into $C_b(S)$ if there is an isometry $J\colon X\longrightarrow C^b(S)$ such that for all $s\in S$ the following properties are satisfied:
\begin{enumerate}[(N1)]
\item For $p_s:=J^*(\delta_s)\in X^*$ we have $\|p_s\|=1$.
\item $\K p_s$ is an $L$-summand in $X^*$.
\end{enumerate}
We will further suppose, for the sake of simplicity and since it can be done in the most interesting examples, that the elements of the set $\{p_s\colon s\in S\}\subset X^*$ are pairwise linearly independent (so, by (N1), they are linearly independent, see Lemma~\ref{lemma:L-projections}).
\end{definition}

We have the following version of Theorem~\ref{theo:carapuntodaugapreduL1}.

\begin{prop}\label{prop:nicelyembedded}
Let $X$ be a Banach space nicely embedded into $C^b(S)$ for which $\{p_s\colon s\in S\}$ is (pairwise) linearly independent and let $x\in S_X$. Then, the following assertions are equivalent:
\begin{enumerate}
\item[(1)] $x$ is a Daugavet-point.
\item[(2)] $x$ is a $\Delta$-point.
\item[(4)] For every $\varepsilon>0$, the set $$\{s\in S\colon |p_s(x)|>1-\varepsilon\}$$
is infinite.
\item[(5)] $D(x)\cap \bigl[\{p_s\colon s\in S\}\bigr]'\neq \emptyset$.
\item[(6)] For every $y\in B_X$ there exists a sequence $\{x_n^{**}\}\subseteq B_{X^{**}}$ satisfying that $\limsup\Vert x-x_n^{**} \Vert=2$ and that
    $$
    \left\|\sum_{k=1}^m \lambda_k (x_k^{**}-y)\right\|\leq 2 \max\bigl\{|\lambda_1|,\ldots,|\lambda_m|\bigr\}
    $$
    for every $m\in \mathbb{N}$ and every $\lambda_1,\ldots,\lambda_m\in \K$ (that is, the linear operator $T$ from $c_0$ to $X^{**}$ defined by $T(e_n) = x_n^{**}-y$ for all $n\in \mathbb{N}$ is continuous).
\item[(7)] For every $y\in B_X$ there exists a sequence $\{x_n^{**}\}\subseteq B_{X^{**}}$ satisfying that $\limsup\Vert x-x_n^{**} \Vert=2$ and that $\{x_n^{**}\}\longrightarrow y$ in the weak-star topology.
\end{enumerate}
\end{prop}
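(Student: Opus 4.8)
The plan is to run the proof of Theorem~\ref{theo:carapuntodaugapreduL1} almost verbatim, letting the family $\{p_s\colon s\in S\}$ play the role that $\exposatx(B_{X^*})$ played there. Two structural facts legitimise this substitution. First, since $J\colon X\longrightarrow C^b(S)$ is an isometry and $p_s=J^*(\delta_s)$, for every $z\in X$ one has $\|z\|=\|Jz\|_\infty=\sup_{s\in S}|p_s(z)|$; this replaces the formula $\|z\|=\sup\{|e^*(z)|\colon e^*\in\exposatx(B_{X^*})\}$ used repeatedly in the original argument. Second, hypothesis (N2) says that each $\K p_s$ is an $L$-summand of $X^*$, and together with the assumed pairwise linear independence of $\{p_s\colon s\in S\}$ this is precisely the input needed to invoke Lemma~\ref{lemma:L-projections}.

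The implication (1)$\Rightarrow$(2) is trivial, while (6)$\Rightarrow$(7) and (7)$\Rightarrow$(1) are copied unchanged from the proof of Theorem~\ref{theo:carapuntodaugapreduL1}: those steps use only that the $c_0$-basis is weakly null, weak-star density of $B_X$ in $B_{X^{**}}$, and lower weak-star semicontinuity of the norm, none of which refer to the $L_1$-predual structure. The equivalence (4)$\Leftrightarrow$(5) is immediate from the definition of the derived set $[\{p_s\colon s\in S\}]'$ together with the norm formula above (in the complex case one passes to a weak-star convergent subnet of a sequence $p_{s_n}$ with $|p_{s_n}(x)|\to 1$, obtaining a cluster point $x^*$ with $|x^*(x)|=1$).

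For (2)$\Rightarrow$(4) I would argue by contraposition, exactly as in (2)$\Rightarrow$(3). If $\{s\colon |p_s(x)|>1-\varepsilon_0\}$ is finite for some $\varepsilon_0>0$, then, because $\sup_s|p_s(x)|=\|x\|=1$ while the supremum over the complement is at most $1-\varepsilon_0$, the maximum is attained: there are finitely many $s_1,\dots,s_k$ with $|p_{s_i}(x)|=1$ and an $\alpha>0$ with $|p_s(x)|\leq 1-\alpha$ for every other $s$. Choosing $\theta_i\in\T$ with $\theta_i p_{s_i}(x)=1$ and setting $g:=\frac1k\sum_{i=1}^k\theta_i p_{s_i}$ gives $g\in S_{X^*}$ with $g(x)=1$, so $x\in S:=S(B_X,g,\delta)$ for every $\delta>0$. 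A convexity argument forces $|p_{s_i}(x-y)|$ to be small for each $i$ whenever $y\in S$ and $\delta$ is small, and then the splitting $\|x-y\|=\max\{\max_i|p_{s_i}(x-y)|,\ \sup_{s\notin\{s_1,\dots,s_k\}}|p_s(x-y)|\}\leq 2-\alpha$ shows that $x$ is not a $\Delta$-point.

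For (4)$\Rightarrow$(6), given $y\in B_X$ I would pick distinct $s_n$ with $|p_{s_n}(x)|>1-\frac1n$ (possible by (4)), take the $L$-projection $P_n$ onto $\K p_{s_n}$ supplied by (N2), choose $\theta_n\in\T$ with $\theta_n p_{s_n}(x)=|p_{s_n}(x)|$, and define $x_n^{**}(\lambda p_{s_n}+z^*):=-\theta_n^{-1}\lambda+z^*(y)$ on $X^*=\K p_{s_n}\oplus_1\ker P_n$. The $L$-summand splitting gives $x_n^{**}\in B_{X^{**}}$, and Lemma~\ref{lemma:L-projections} applied to $P=P_1+\cdots+P_m$ yields $B_{P(X^*)}=\aconv\{p_{s_1},\dots,p_{s_m}\}$, so the same orthogonality computation as in the theorem produces the $c_0$-estimate $\|\sum_k\lambda_k(x_k^{**}-y)\|\leq 2\max_j|\lambda_j|$, while $|(x-x_n^{**})(p_{s_n})|=|p_{s_n}(x)|+1>2-\frac1n$ forces $\limsup\|x-x_n^{**}\|=2$. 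The main obstacle is the phase bookkeeping in the complex case: unlike $\ext(B_{X^*})$, the family $\{p_s\colon s\in S\}$ need not be stable under multiplication by $\T$, so one must insert the unimodular factors $\theta_i,\theta_n$ to make the relevant functionals attain $1$ at $x$ and to make $(x-x_n^{**})(p_{s_n})$ have modulus near $2$; once this is handled, every estimate reduces to its counterpart in the proof of Theorem~\ref{theo:carapuntodaugapreduL1}.
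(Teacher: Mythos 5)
Your proposal is correct and is essentially the paper's own argument: the paper likewise reduces everything to (4)$\Rightarrow$(6) (the remaining implications being the general-Banach-space ones of Remark~\ref{remark:algunasvalenengeneral}, with $\|z\|=\sup_{s\in S}|p_s(z)|$ available because $J$ is an isometry), and the only modification it records is precisely the unimodular correction $\theta_n$ in the definition of $x_n^{**}$ that you introduce. The one point where you are no more precise than the paper is (4)$\Rightarrow$(5) over $\C$, where a weak-star cluster point of the $p_{s_n}$ a priori only satisfies $|x^*(x)|=1$ rather than $x^*(x)=1$; this imprecision is inherited from the paper's own formulation of (5) and does not affect the cycle (1)$\Rightarrow$(2)$\Rightarrow$(4)$\Rightarrow$(6)$\Rightarrow$(7)$\Rightarrow$(1).
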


The proof is just an adaptation of the one of Theorem~\ref{theo:carapuntodaugapreduL1}. Actually, as it is noted in Remark~\ref{remark:algunasvalenengeneral}, only (4)$\Rightarrow$(6) has to be proved. To get this implication, we follow the proof of (3)$\Rightarrow$(6) of Theorem~\ref{theo:carapuntodaugapreduL1}, find a sequence $\{s_n\}$ of different points of $S$ such that $|p_{s_n}(x)|>1-\varepsilon$ and, instead of using Eq.~\eqref{eq:proof-Theorem-L1predual-def-xnstarstar}, we define the linear functional $x_n^{**}\colon X^*=\K p_{s_n}\oplus_1 \ker P_n\longrightarrow \mathbb K$ by
\begin{equation*}
x_n^{**}(\lambda p_{s_n}+z^*)=-\lambda\theta_n +z^*(y)
\end{equation*}
where $\theta_n\in \T$ satisfies that $p_{s_n}(x)=\theta_n|p_{s_n}(x)|$ for every $n\in \mathbb{N}$.

The above result applies, for instance, to a \emph{function algebra} $A$ on a compact Hausdorff space $K$, that is, A is a closed subalgebra of a $C(K)$ spaces separating the points of $K$ and containing the constant functions. Indeed, to $A$ it is associated a distinguished subset $\delta A\subset K$, called the Choquet boundary of $A$, defined by
$$
\partial A=\bigl\{k\in K\colon \delta_k|_A \text{ is an extreme point of $B_{A^*}$}\bigr\}.
$$
Then, it is known that $A$ is nicely embedded into $C^b(\partial A)$ (see the proof of \cite[Theorem 3.3]{wernerpredu} for instance). A distinguished example is the disk algebra $\mathbb{A}$, the space of those functions on $C\bigl(\overline{\mathbb{D}}\bigr)$ which are holomorphic on $\mathbb{D}$, endowed with the supremum norm. The Choquet boundary of $\mathbb{A}$ is $\T$, so Proposition~\ref{prop:nicelyembedded} gives an alternative proof of the fact that $\mathbb{A}$ has the Daugavet property from \cite{woj} or \cite{wernerpredu}.

Next, let us relate $\Delta$-points and polyhedrality for real Banach spaces. Recall that a real Banach space is said to be \emph{polyhedral} if the unit balls of all its finite-dimensional subspaces are polytopes (i.e.\ they have finitely many extreme points). There are several versions of polyhedality which have been studied in the literature (see \cite{DurierPapini,FonfVesely} and references therein) of which we would like to emphasise the following two, named using the notation of \cite{cas}. A real Banach space $X$ is said to be:
\begin{enumerate}[(a)]
  \item \emph{(GM) polyhedral} if $x^*(x)<1$ whenever $x\in S_X$ and $x^*\in [\ext(B_{X^*})]'$;
  \item \emph{(BD) polyhedral} if for each $x\in S_X$, $\sup\{x^*(x)\colon x^*\in\ext(B_{X^*})\setminus D(x)\}<1$.
\end{enumerate}
It is known that
\begin{equation*}\label{eq:GM=>BD}\tag{$\star\star\star$}
\text{(GM) polyhedral } \ \Longrightarrow\  \ \text{ (BD) polyhedral } \ \Longrightarrow\  \ \text{ polyhedral},
\end{equation*}
see \cite[Theorem 1]{DurierPapini} or \cite[Theorem 1.2]{FonfVesely}. The above implications does not reverse in general \cite{DurierPapini,FonfVesely}.

Our first observation is the following easy consequence of Theorem~\ref{theo:carapuntodaugapreduL1}, Remark~\ref{remark:algunasvalenengeneral}, and Proposition~\ref{prop:nicelyembedded}.

\begin{cor}\label{cor:equivalence-GMpolyhedral}
Let $X$ be a real Banach space.
\begin{enumerate}[(a)]
\item If $X$ is (GM) polyhedral, then the set of $\Delta$-points of $X$ is empty.
\item If $X$ is actually an $L_1$-predual, then $X$ is (GM) polyhedral if and only if the set of $\Delta$-points of $X$ is empty.
\item More in general, if $X$ is nicely embedded in some $C^b(S)$ space, then $X$ is (GM) polyhedral if and only if the set of $\Delta$-points of $X$ is empty.
\end{enumerate}
\end{cor}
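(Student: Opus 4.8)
The plan is to run all three parts off the single observation that, for a real Banach space, \emph{failure} of (GM) polyhedrality at a point $x\in S_X$ is exactly the real-case form of condition~(5) in Theorem~\ref{theo:carapuntodaugapreduL1}. Indeed, if $x^*\in[\ext(B_{X^*})]'$ then $\|x^*\|\leq 1$, so $x^*(x)\leq 1$; hence $x$ witnesses the failure of (GM) polyhedrality precisely when there is $x^*\in[\ext(B_{X^*})]'$ with $x^*(x)=1$, i.e.\ when $D(x)\cap[\ext(B_{X^*})]'\neq\emptyset$. By the remark following Theorem~\ref{theo:carapuntodaugapreduL1}, in the real case this is the same as $D(x)\cap[\exposatx(B_{X^*})]'\neq\emptyset$, which is assertion~(5). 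So I only need to decide, for each class of spaces, which implications between (2) and (5) are available.

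For part~(a) I would argue by contraposition inside \emph{general} real Banach spaces. If $x$ is a $\Delta$-point, then by Remark~\ref{remark:algunasvalenengeneral} (which records that (2)$\Rightarrow$(5) holds with no hypothesis on $X$) we obtain some $x^*\in D(x)\cap[\exposatx(B_{X^*})]'$. Since $\exposatx(B_{X^*})\subseteq\ext(B_{X^*})$ gives $[\exposatx(B_{X^*})]'\subseteq[\ext(B_{X^*})]'$, this same $x^*$ lies in $D(x)\cap[\ext(B_{X^*})]'$, so $x^*(x)=1$ with $x^*\in[\ext(B_{X^*})]'$, contradicting (GM) polyhedrality. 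Hence a (GM) polyhedral space has no $\Delta$-points.

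For part~(b) the forward implication is exactly part~(a). For the converse I would prove the contrapositive: if $X$ is not (GM) polyhedral, choose $x\in S_X$ and $x^*\in[\ext(B_{X^*})]'$ with $x^*(x)=1$, so that $D(x)\cap[\ext(B_{X^*})]'\neq\emptyset$; by the real-case reformulation this is assertion~(5) of Theorem~\ref{theo:carapuntodaugapreduL1}, and since $X$ is an $L_1$-predual the full equivalence of that theorem yields (5)$\Rightarrow$(2), i.e.\ $x$ is a $\Delta$-point. Thus failure of (GM) polyhedrality produces a $\Delta$-point, which is what we need.

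Part~(c) follows the same two-step scheme, using Proposition~\ref{prop:nicelyembedded} in place of the theorem; here lies the only real difficulty and the step I expect to be the main obstacle. The forward direction is again part~(a) (completely general). For the converse I start from $x^*\in[\ext(B_{X^*})]'$ with $x^*(x)=1$; the obstacle is that Proposition~\ref{prop:nicelyembedded} is phrased through the distinguished functionals $\{p_s\colon s\in S\}$ rather than through $\ext(B_{X^*})$, so I must pass from accumulation of extreme points to accumulation of the $p_s$, and this is the only step that genuinely uses the structure of the nice embedding rather than the equivalences already established. The bridge is the pair of facts that each $p_s$ is an extreme point of $B_{X^*}$ (because $\K p_s$ is an $L$-summand with $\|p_s\|=1$, as in Lemma~\ref{lemma:L-projections}) and that, conversely, the isometric embedding $J\colon X\to C^b(S)$ forces $\ext(B_{X^*})\subseteq\overline{\T\{p_s\colon s\in S\}}^{\,w^*}$ (the dual map $J^*$ carries the extreme points of $B_{C^b(S)^*}$, unimodular evaluations, into this weak-star closure). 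Granting this, fix $\varepsilon>0$ and the weak-star neighbourhood $U=\{y^*\colon \re y^*(x)>1-\varepsilon\}$ of $x^*$: since weak-star accumulation points meet every neighbourhood in infinitely many points of the set (the topology is $T_1$), $U$ contains infinitely many extreme points, and the closure inclusion then forces $\T\{p_s\}\cap U$ to be infinite; in the real case ($\T=\{\pm1\}$) this says $\{s\colon|p_s(x)|>1-\varepsilon\}$ is infinite, which is assertion~(4) of Proposition~\ref{prop:nicelyembedded}. Hence $x$ is a $\Delta$-point and the converse holds.
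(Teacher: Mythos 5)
Your proposal is correct and follows exactly the route the paper intends: the paper states this corollary without proof as an ``easy consequence'' of Theorem~\ref{theo:carapuntodaugapreduL1}, Remark~\ref{remark:algunasvalenengeneral}, and Proposition~\ref{prop:nicelyembedded}, and your argument is precisely the chain (GM)-failure at $x$ $\Leftrightarrow$ $D(x)\cap[\ext(B_{X^*})]'\neq\emptyset$ $\Leftrightarrow$ condition (5), combined with (2)$\Rightarrow$(5) in general and (5)$\Rightarrow$(2) in the two special classes. The only point the paper leaves implicit and you rightly supply is the bridge $\ext(B_{X^*})\subseteq\overline{\T\{p_s\colon s\in S\}}^{\,w^*}$ (Milman's theorem applied to $B_{X^*}=J^*(B_{C^b(S)^*})=\acconv^{w^*}\{p_s\}$) needed to translate accumulation of extreme points into assertion (4) of Proposition~\ref{prop:nicelyembedded}; your handling of it is sound.
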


Contrary to what it was stated during years in many papers, the implications in Eq.~\eqref{eq:GM=>BD} does not reverse for $L_1$-preduals, a result recently discovered \cite{cas}. Actually, (GM) polyhedrality and (BD) polyhedrality are not equivalent for $L_1$-preduals. This is exactly the ``breaking'' point for $L_1$-preduals \cite{cas}, as all versions of polyhedrality weaker than (BD) polyhedrality (including polyhedrality itself) are equivalent to (BD) polyhedrality for $L_1$-preduals. It is easy to see that examples of Banach spaces failing (BD) polyhedrality are $C(K)$ spaces and $C_0(L)$ spaces when the locally compact topological space $L$ has an accumulation point.

Inspired by Corollary \ref{cor:equivalence-GMpolyhedral}, one may wonder if the failure of (BD) polyhedrality for $L_1$-preduals can be characterised by some kind of ``massiveness'' of the set of $\Delta$-points. The example given in \cite{cas} to show that (GM) polyhedrality and (BD) polyhedrality are not equivalent, makes us think that a positive answer could be possible. Let us state the example here. Consider
$$
W=\left\{x\in c\colon \lim_n x(n)=\sum_{n=1}^\infty \frac{x(n)}{2^n}  \right\}.
$$
It is shown in \cite[Section 3]{cas} that $W$ is an $L_1$-predual (the operator $\phi\colon \ell_1 \longrightarrow W^*$ given by
$$
[\phi(y)](x)=\sum_{n=1}^{\infty} x(n)y(n) \ \ \text{for every $x\in W$ and every $y\in \ell_1$}
$$
is an onto isometry) and that $W$ is (BD) polyhedral. Besides, $\{e_n^*\}$ converges weakly-star to the functional $\left\{\frac{1}{2^n}\right\}$ in $S_{W^*}$. This shows that the constant function $1$ of $W$ is a $\Delta$-point by Theorem~\ref{theo:carapuntodaugapreduL1}, so $W$ is not (GM) polyhedral. Actually, it also follows from Theorem~\ref{theo:carapuntodaugapreduL1} that the only $\Delta$-points of $S_W$ are the constant function $1$ and its opposite, the constant function $-1$, since they are the only points of $S_W$ at which the functional $\left\{\frac{1}{2^n}\right\}$ attains its norm.

A property related to $\Delta$-points which implies some ``massiveness'' of the set of $\Delta$-points is the following one from \cite{ahlp}. Let $X$ be a Banach space, and let $\Delta_X$ be the set of $\Delta$-points of $S_X$. A Banach space $X$ is said to have the \emph{convex diametral local diameter two property} (\emph{convex-DLD2P} in short) if $B_X=\cconv(\Delta_X)$. This property is introduced in \cite{ahlp} as a property which is implied by the diametral local diameter two property or DLD2P (in our language, $\Delta_X=S_X$) and which implies that every slice of $B_X$ has diameter two \cite[Proposition 5.2]{ahlp}.
It is also shown in \cite{ahlp} that $C(K)$ spaces (with $K$ infinite) and M\"{u}ntz spaces on $[0,1]$ have the convex-DLD2P \cite[Proposition 5.3 and Theorem 5.7]{ahlp}, and that $c_0$ fails the convex-DLD2P \cite[Remark 5.5]{ahlp}. This shows, in particular, that the DLD2P, the convex-DLD2P, and the diameter two property of the slices are different properties even in the $L_1$-preduals ambient.

As a consequence of Theorem~\ref{theo:carapuntodaugapreduL1}, we get the following result on the convex-DLD2P.

\begin{cor}\label{cor:suff-convexDLD2P}
Let $X$ be an infinite-dimensional $L_1$-predual. If there is $e^*\in \ext(B_{X^*})$ which is the weak-star limit of a net $\{e_\lambda^*\}$ of pairwise linearly independent elements in $\ext(B_{X^*})$, then $X$ has the convex-DLD2P. In the real case, the previous condition can be replaced with $[\ext(B_{X^*})]'\cap \ext(B_{X^*}) \neq \emptyset$.
\end{cor}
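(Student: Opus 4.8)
The plan is to establish the identity $B_X=\cconv(\Delta_X)$ through its support functional. Since rotating a $\Delta$-point by a modulus-one scalar again gives a $\Delta$-point, the set $\Delta_X$ is balanced, so $\cconv(\Delta_X)$ is a closed absolutely convex subset of $B_X$; by Hahn--Banach it equals $B_X$ exactly when $\sup\{\re f(x):x\in\Delta_X\}=1$ for every $f\in S_{X^*}$. Thus it suffices to produce, for each $f\in S_{X^*}$ and each $\varepsilon>0$, a genuine $\Delta$-point $x$ with $\re f(x)>1-\varepsilon$; equivalently, every slice of $B_X$ must meet $\Delta_X$.

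First I would isolate a large reservoir of $\Delta$-points. Writing $e^*$ for the extreme point in the hypothesis and $\{e_\lambda^*\}$ for the net of pairwise linearly independent extreme points with $e_\lambda^*\to e^*$ weak-star, I claim that $A:=\{x\in B_X:|e^*(x)|=1\}\subseteq\Delta_X$ (note $A\subseteq S_X$ automatically). Indeed, if $e^*(x)=\theta\in\T$ then $\bar\theta e^*\in D(x)$; moreover, phasing each $e_\lambda^*$ by the unimodular $\phi_\lambda$ for which $\re(\phi_\lambda e_\lambda^*(x))=|e_\lambda^*(x)|$ (so $\phi_\lambda e_\lambda^*\in\exposatx(B_{X^*})$) and observing that $\phi_\lambda\to\bar\theta$ because $e_\lambda^*(x)\to\theta$, we get $\phi_\lambda e_\lambda^*\to\bar\theta e^*$. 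Hence $\bar\theta e^*\in D(x)\cap[\exposatx(B_{X^*})]'$ and Theorem~\ref{theo:carapuntodaugapreduL1}(5) gives $x\in\Delta_X$. This phasing is exactly the device from the implication (3)$\Rightarrow$(6); in the real case it degenerates to the two signs $\theta=\pm1$.

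It then remains to prove $\cconv(A)=B_X$, for which I would exploit the $L$-summand structure. Since $e^*\in\ext(B_{X^*})$ with $X^*\equiv L_1(\mu)$, the line $\K e^*$ is an $L$-summand, say $X^*=\K e^*\oplus_1 Z$, and dually $X$ embeds isometrically into $\K\oplus_\infty Z^*$ via $x\mapsto(e^*(x),Rx)$, where $R\colon X\to Z^*$ is the map dual to $Z\hookrightarrow X^*$. The accumulating extreme points are supported off the atom of $e^*$, hence lie in $Z$, and the convergence $e_\lambda^*\to e^*$ reads $\langle Rx,e_\lambda^*\rangle\to e^*(x)$ for all $x\in X$: this is precisely the compatibility between the ``$e^*$-coordinate'' and the ``$Z^*$-coordinate'' that powers a $C(K)$-style peak-and-average argument. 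Reducing by the support functional I would split $f=\beta e^*+g$ with $g\in Z$ and $|\beta|+\Vert g\Vert=1$; choosing $\theta\in\T$ with $\beta\theta=|\beta|$, the task becomes to produce elements of $A$ with $e^*$-coordinate $\theta$ and $g$-value arbitrarily close to $\Vert g\Vert$, and then to average finitely many such elements (over unimodular values of $\theta$, with weights summing to $e^*(y)$) so as to recover an arbitrary $y\in B_X$ in norm, just as one fills $B_{C(K)}$ from functions peaking at an accumulation point of $K$. Lemma~\ref{lemma:L-projections} applied to $e^*,e_{\lambda_1}^*,\dots,e_{\lambda_n}^*$ supplies the $\ell_1^n$ $L$-summands needed to prescribe finitely many coordinates, and the interpolation available in $L_1$-preduals (\cite{linds}) is what lets these prescriptions be realized inside $X$.

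The main obstacle is this last step, and specifically the word \emph{genuine}: a naive Goldstine approximation of the norming element $(\theta,\zeta)\in B_{X^{**}}=\K\oplus_\infty Z^*$ only yields $x\in B_X$ with $|e^*(x)|$ \emph{close to} $1$, and such an $x$ need not be a $\Delta$-point, hence need not lie in $A$. One must instead attain $|e^*(x)|=1$ \emph{exactly} while controlling $x$ against all extreme functionals of $B_{X^*}$ simultaneously rather than finitely many, and it is here that the hypothesis that $e^*$ is itself extreme and an accumulation point of extreme points is indispensable---mirroring the failure for the space $W$, where the weak-star limit of the $e_n^*$ is not extreme and the convex-DLD2P fails. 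Passing rigorously from prescribing finitely many coordinates to a genuine norm approximation is the crux; the $C(K)$ case of \cite{ahlp} is the model to imitate, now carried out inside the Lindenstrauss space $X\subseteq X^{**}=L_\infty(\mu)$.
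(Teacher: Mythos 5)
Your first step is correct and coincides with what the paper does: if $|e^*(x)|=1$ then, after multiplying the net by suitable unimodular scalars, conditions (4)/(5) of Theorem~\ref{theo:carapuntodaugapreduL1} show that $x$ is a $\Delta$-point, so the face $F(e^*)=\{x\in S_X\colon e^*(x)=1\}$ and its rotations consist of $\Delta$-points. The problem is the second step, and you have diagnosed it yourself: you never actually prove that $\cconv(A)=B_X$; you describe a peak-and-average strategy and then concede that ``passing rigorously from prescribing finitely many coordinates to a genuine norm approximation is the crux.'' As written the argument does not establish the corollary --- a Goldstine approximation only gives $|e^*(x)|$ close to $1$, and nothing in your sketch shows that $F(e^*)$ is even non-empty, let alone that its closed convex hull is all of $B_X$. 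So there is a genuine gap at the decisive point.

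The paper closes this gap with a citation rather than a construction: since $\K e^*$ spans an $L$-summand of $X^*$ (being an extreme point of the ball of an $L_1$-space), $e^*$ is a spear functional, and \cite[Corollary 2.13]{SpearsBook} gives directly that $F(e^*)\neq\emptyset$ and $B_X=\cconv\bigl(F(e^*)\bigr)$; combined with the first step this finishes the proof in two lines. If you insist on a self-contained argument you would essentially have to reprove that result (it is the content of \cite[Theorem~2.9]{SpearsBook}, invoked again in Corollary~\ref{coro:nicellyembedded-convexDLD2P}); your decomposition $X^{**}=\K\oplus_\infty Z^*$ is the right starting point, but the passage from prescribing finitely many coordinates via Lemma~\ref{lemma:L-projections} to producing elements of $B_X$ on which $e^*$ attains its norm \emph{exactly} is precisely the non-trivial content you would still need to supply.
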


\begin{proof}
As $e^*\in \ext(B_{X^*})$, it is known that the set $F(e^*)=\{x\in S_X\colon e^*(x)=1\}$ is not empty and, moreover, that
$$
B_X=\cconv\bigl(F(x)\bigr)
$$
(see \cite[Corollary 2.13]{SpearsBook}, for instance). Pick any $x\in F(e^*)$. As $\{|e^*_\lambda(x)|\}$ converges to $e^*(x)=1$ and the $\{e^*_\lambda\}$ are pairwise linearly independent, it follows from Theorem~\ref{theo:carapuntodaugapreduL1} that $x$ is a $\Delta$-point. Therefore, $X$ has the convex-DLD2P, as desired.
\end{proof}

Two particular cases of the above result are interesting. Item (a) extends the result on $C(K)$ spaces from \cite{ahlp} and item (c) provides new examples of $L_1$-preduals with the convex-DLD2P.

\begin{cor}\label{cor:suff-convexDLD2P-particularcases}
Let $X$ be an infinite-dimensional $L_1$-predual. Then, each of the following conditions implies the convex-DLD2P:
\begin{enumerate}
  \item[(a)] if $\ext(B_{X^*})$ is weak-star closed (in particular, if $X=C(K)$ for some compact space $K$);
  \item[(b)] if $S_X$ contains a $\Delta$-point at which the norm is smooth;
  \item[(c)] $X=C_0(L)$ for some locally compact space $L$ containing an accumulation point.
\end{enumerate}
\end{cor}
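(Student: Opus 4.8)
The plan is to deduce all three statements from Corollary~\ref{cor:suff-convexDLD2P}. Concretely, in each case I would produce an extreme point $e^*\in\ext(B_{X^*})$ that is the weak-star limit of a net of \emph{pairwise linearly independent} elements of $\ext(B_{X^*})$; in the real case it is enough to exhibit a single point of $[\ext(B_{X^*})]'\cap\ext(B_{X^*})$. Once this is done, Corollary~\ref{cor:suff-convexDLD2P} immediately yields the convex-DLD2P, so the whole task reduces to verifying its hypothesis in the three situations.

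For (a), first note that since $X$ is infinite-dimensional, $B_{X^*}$ is an infinite-dimensional weak-star compact convex set, so by Krein--Milman $\ext(B_{X^*})$ must be infinite. As it is assumed weak-star closed, it is weak-star compact. In the real case an infinite compact set has an accumulation point inside itself, giving at once $[\ext(B_{X^*})]'\cap\ext(B_{X^*})\neq\emptyset$. For the complex case I would first check that there are infinitely many pairwise linearly independent extreme points: otherwise $\ext(B_{X^*})$ would be contained in a finite union of circles $\T e^*_1,\dots,\T e^*_m$, hence in the finite-dimensional (thus weak-star closed) subspace $\lin\{e^*_1,\dots,e^*_m\}$, forcing $X^*$ to be finite-dimensional, a contradiction. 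Picking countably many pairwise linearly independent extreme points, their infinite set has a weak-star accumulation point $e^*$, which lies in $\ext(B_{X^*})$ since this set is closed; a net of distinct (hence pairwise linearly independent) such points converges to $e^*$, and Corollary~\ref{cor:suff-convexDLD2P} applies. The case $X=C(K)$ fits here because $\ext(B_{C(K)^*})=\T\{\delta_k\colon k\in K\}$ is the continuous image of the compact set $\T\times K$, hence weak-star closed.

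For (b), let $x$ be a smooth $\Delta$-point, so that $D(x)=\{e^*\}$ is a singleton. Since $x$ is a $\Delta$-point, the implication (2)$\Rightarrow$(5) in Theorem~\ref{theo:carapuntodaugapreduL1} gives $D(x)\cap[\exposatx(B_{X^*})]'\neq\emptyset$, whence $e^*\in[\exposatx(B_{X^*})]'$. The point I would verify is that $e^*$ is itself extreme: if $e^*=\tfrac12(f+g)$ with $f,g\in B_{X^*}$, then evaluating at $x$ and using $e^*(x)=1$ forces $f(x)=g(x)=1$, so $f,g\in D(x)=\{e^*\}$ and $e^*\in\ext(B_{X^*})$. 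Finally, a net $\{e^*_\lambda\}\subseteq\exposatx(B_{X^*})$ converging to $e^*$ satisfies $e^*_\lambda(x)\to e^*(x)=1\neq0$, so eventually its (distinct) terms have nonzero value at $x$ and are therefore pairwise linearly independent by the remark preceding Theorem~\ref{theo:carapuntodaugapreduL1}; Corollary~\ref{cor:suff-convexDLD2P} then gives the conclusion.

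For (c), recall that $C_0(L)$ is an infinite-dimensional $L_1$-predual (its dual is the $L$-space $M(L)$) and that $\ext(B_{C_0(L)^*})=\T\{\delta_t\colon t\in L\}$, with the weak-star topology on $\{\delta_t\}$ matching the topology of $L$. If $t_0$ is an accumulation point of $L$, choosing a net $t_\lambda\to t_0$ with $t_\lambda\neq t_0$ yields $\delta_{t_\lambda}\to\delta_{t_0}$ weak-star; distinct point masses are pairwise linearly independent and $\delta_{t_0}\in\ext(B_{C_0(L)^*})$, so Corollary~\ref{cor:suff-convexDLD2P} applies. The only genuinely delicate step in the whole proof is securing \emph{pairwise linear independence} of the approximating net in the complex setting of (a) and (b); everything else is a direct verification of the hypotheses of Corollary~\ref{cor:suff-convexDLD2P}.
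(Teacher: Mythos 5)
Your proposal is correct and follows essentially the same route as the paper: all three cases are reduced to Corollary~\ref{cor:suff-convexDLD2P} by exhibiting an extreme point that is a weak-star limit of a net of pairwise linearly independent extreme points. You are in fact slightly more thorough than the paper in a couple of spots (you supply the argument for (c), which the paper leaves implicit, and you justify the pairwise linear independence in the complex case via the nonzero value at $x$, where the paper simply asserts it).
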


\begin{proof}
For (a), being $X$ infinite-dimensional, we may find a net $\{e_\lambda^*\}$ of pairwise linearly independent elements of $\ext(B_{X^*})$. Being $\{e_\lambda^*\colon \lambda \in \Lambda\}$ an infinite subset of the weakly-star compact subset $B_{X^*}$, it contains a weak-star limit point $e^*$. As $\ext(B_{X^*})$ is weakly-star closed, such a limit point must belong to $\ext(B_{X^*})$. But then, $e^*\in \ext(B_{X^*})$ is the weak-star limit of a net of pairwise linearly independent extreme points, so Corollary~\ref{cor:suff-convexDLD2P} gives the result.

For (b), pick $e^*\in \ext(B_{X^*})$ with $e^*(x)=1$. If the norm of $X$ is smooth at $x$, then $D(x)=\{e^*\}$. As $x$ is a $\Delta$-point, it follows from Theorem~\ref{theo:carapuntodaugapreduL1} that $D(x)\cap [\exposatx(B_{X^*})]'\neq \emptyset$, so there is a net of distint elements of $\exposatx(B_{X^*})$ which is weakly-star convergent to $e^*$. But distint elements of $\exposatx(B_{X^*})$ are clearly pairwise linearly independent. Then, Corollary~\ref{cor:suff-convexDLD2P} gives the result.
\end{proof}

Let us observe that part of the results in Corollaries \ref{cor:suff-convexDLD2P} and \ref{cor:suff-convexDLD2P-particularcases} can be obtained for nicely embedded Banach spaces, applying  Proposition~\ref{prop:nicelyembedded} instead of Theorem~\ref{theo:carapuntodaugapreduL1}. Only part of the language changes, so we only include a sketch of its proof.

\begin{cor}\label{coro:nicellyembedded-convexDLD2P}
Let $X$ be an infinite-dimensional Banach space which is nicely embedded into $C^b(S)$ for which $\{p_s\colon s\in S\}$ is (pairwise) linearly independent (in particular, if $X$ is a function algebra). Then, each of the following conditions implies that $X$ has the convex DLD2P:
\begin{enumerate}
\item[(a)] if there is a net $\{p_{s_\lambda}\}$ of distinct elements which is weak-star converging to some $p_{s_0}$;
\item[(b)] if $\{p_s\colon s\in S\}$ is weak-star closed;
\item[(c)] if $S_X$ contains a $\Delta$-point at which the norm is smooth.
\end{enumerate}
\end{cor}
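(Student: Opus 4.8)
The plan is to mimic the proofs of Corollaries~\ref{cor:suff-convexDLD2P} and \ref{cor:suff-convexDLD2P-particularcases}, replacing the set $\ext(B_{X^*})$ by $\{p_s\colon s\in S\}$ and Theorem~\ref{theo:carapuntodaugapreduL1} by Proposition~\ref{prop:nicelyembedded}. Two ingredients should be isolated first. The first is that each $p_s$ is an extreme point of $B_{X^*}$: a standard computation in the $\ell_1$-decomposition $X^*=\K p_s\oplus_1 \ker P_s$ shows that a norm-one functional spanning an $L$-summand is extreme. The second, which is the nicely embedded analogue of the fact used in Corollary~\ref{cor:suff-convexDLD2P} (namely \cite[Corollary 2.13]{SpearsBook}), is that for $s_0\in S$ the face $F(p_{s_0})=\{x\in S_X\colon p_{s_0}(x)=1\}$ is non-empty and $B_X=\acconv\bigl(F(p_{s_0})\bigr)$. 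Here is precisely where the hypothesis (N2) — that $\K p_{s_0}$ is an $L$-summand, equivalently that $\ker p_{s_0}$ is an $M$-ideal of $X$ — enters, paralleling the role of the $L_1$-predual structure in the original argument.

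With these in hand I would prove the common core, the analogue of Corollary~\ref{cor:suff-convexDLD2P}: \emph{if $p_{s_0}$ is the weak-star limit of a net $\{p_{s_\lambda}\}$ of distinct elements of $\{p_s\colon s\in S\}$, then $X$ has the convex-DLD2P}. Indeed, fix any $x\in F(p_{s_0})$; then $p_{s_\lambda}(x)\to p_{s_0}(x)=1$, so $|p_{s_\lambda}(x)|>1-\varepsilon$ along a cofinal (hence infinite, since the $s_\lambda$ are distinct) set of indices. Thus $\{s\in S\colon |p_s(x)|>1-\varepsilon\}$ is infinite for every $\varepsilon>0$, and $x$ is a $\Delta$-point by the equivalence (2)$\Leftrightarrow$(4) of Proposition~\ref{prop:nicelyembedded}. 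Hence $F(p_{s_0})\subseteq \Delta_X$, and since $\Delta_X$ is balanced (condition (4) is invariant under multiplication by $\T$), the second ingredient yields $B_X=\acconv\bigl(F(p_{s_0})\bigr)\subseteq \cconv(\Delta_X)$.

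Item (a) is then immediate. For item (b), infinite-dimensionality forces $\{p_s\colon s\in S\}$ to be infinite (otherwise $\|\cdot\|=\max_s|p_s(\cdot)|$ would embed $X$ into a finite-dimensional $\ell_\infty^n$); being an infinite subset of the weak-star compact ball $B_{X^*}$, it has a weak-star accumulation point, which lies in $\{p_s\colon s\in S\}$ because that set is weak-star closed, and extracting a net of distinct $p_{s_\lambda}$ converging to it reduces (b) to the core statement. For item (c), if $x_0$ is a $\Delta$-point at which the norm is smooth, then $D(x_0)$ is a singleton $\{e^*\}$ with $e^*\in\ext(B_{X^*})$ (an extreme point of the one-point face $D(x_0)$), and Proposition~\ref{prop:nicelyembedded}(5) gives $e^*\in\bigl[\{p_s\colon s\in S\}\bigr]'$, i.e.\ $e^*$ is the weak-star limit of a net of distinct $p_{s_\lambda}$; running the core argument with $e^*$ in place of $p_{s_0}$ shows $F(e^*)\subseteq\Delta_X$, and one concludes provided $B_X=\acconv\bigl(F(e^*)\bigr)$.

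I expect the main obstacle to be the second ingredient, and specifically its use in item (c). For the functionals $p_s$ themselves (items (a) and (b)) the equality $B_X=\acconv\bigl(F(p_s)\bigr)$ should be extracted from the $L$-summand/$M$-ideal structure granted by (N2), exactly as in the $L_1$-predual case, where every extreme point of $B_{X^*}$ spans an $L$-summand. In item (c), however, the limiting functional $e^*$ is only known to be \emph{some} extreme point of $B_{X^*}$ and need not be one of the $p_s$, so $\K e^*$ is not a priori an $L$-summand; establishing $B_X=\acconv\bigl(F(e^*)\bigr)$ in that case (or, alternatively, arranging for the accumulation point to be chosen inside $\{p_s\colon s\in S\}$) is the delicate point, since the naive statement fails for arbitrary extreme points — e.g.\ in $\ell_2$ with $e^*=e_1$ one has $\acconv\bigl(F(e^*)\bigr)=[-e_1,e_1]\neq B_{\ell_2}$ — so the argument genuinely relies on the additional structure at $e^*$.
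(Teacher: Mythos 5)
Your proposal follows the paper's proof essentially verbatim for the common core and for items (a) and (b). The one ingredient you postpone --- that $F(p_{s_0})=\{x\in S_X\colon p_{s_0}(x)=1\}$ is non-empty and norming --- is supplied in the paper exactly where you locate it: since $\K p_{s_0}$ is an $L$-summand of $X^*$ by (N2), $p_{s_0}$ is a spear element of $B_{X^*}$ by \cite[Example 2.12.a]{SpearsBook}, and \cite[Theorem 2.9]{SpearsBook} then gives that $F(p_{s_0})\neq\emptyset$ and $\cconv\bigl(F(p_{s_0})\bigr)=B_X$. From there the paper argues as you do: every $x\in F(p_{s_0})$ satisfies $|p_{s_\lambda}(x)|\to 1$ along a net of distinct indices, hence is a $\Delta$-point by (2)$\Leftrightarrow$(4) of Proposition~\ref{prop:nicelyembedded}, and the weak-star compactness argument reduces (b) to (a).

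Your reservation about item (c) is well founded, and you should be aware that the paper does not resolve it either: it merely asserts that (b) and (c) ``follow in the same manner'' as Corollary~\ref{cor:suff-convexDLD2P-particularcases}. In the $L_1$-predual case the smooth-point argument closes because the unique element $e^*$ of $D(x_0)$ is an extreme point of $B_{X^*}$ and \emph{every} extreme point of the dual ball of an $L_1$-predual spans an $L$-summand, hence is a spear; in the nicely embedded setting only the $p_s$ are known to span $L$-summands, and the limit $e^*\in D(x_0)\cap\bigl[\{p_s\colon s\in S\}\bigr]'$ need not belong to $\{p_s\colon s\in S\}$. Your point that $\acconv\bigl(F(e^*)\bigr)$ can be a proper subset of $B_X$ for such a limit functional is confirmed by the paper's own example $W$ in Section~\ref{section:L1predDeltapoints}: there the coordinate functionals $e_n^*$ (which are the $p_s$ of a nice embedding of $W$ into $C^b(\mathbb N)$) converge weak-star to $(1/2^n)$, yet $F\bigl((1/2^n)\bigr)=\{\mathbf{1}\}$ and $W$ fails the convex-DLD2P. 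This does not contradict item (c), since the norm of $W$ is not smooth at $\mathbf{1}$, but it shows the step you isolate cannot be waved away: to complete (c) one must genuinely exploit the smoothness hypothesis to show that the limit functional is a spear (or otherwise produce a norming set of $\Delta$-points), and neither your write-up nor the paper's supplies that argument.
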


\begin{proof}
(a) Since $\mathbb K p_{s_0}$ is $L$-embedded, it follows from \cite[Example 2.12.a]{SpearsBook} that $p_{s_0}$ is a spear element of $B_{X^*}$ (see \cite[Definition 2.1]{SpearsBook}), so \cite[Theorem~2.9]{SpearsBook} gives us that $A=\{x\in S_X\colon  p_{s_0}(x)=1\}$ is non-empty and that $\overline{\co}(A)=B_X$. The rest of the proof is completely analogous to the one of Corollary~\ref{cor:suff-convexDLD2P}, using Proposition~\ref{prop:nicelyembedded} instead of Theorem~\ref{theo:carapuntodaugapreduL1}.

(b) and (c) follows from the previous result in the same manner than it is done in the proof of Corollary~\ref{cor:suff-convexDLD2P-particularcases}.
\end{proof}

We do not know wether Corollary~\ref{cor:suff-convexDLD2P} or assertion (b) of Corollary~\ref{cor:suff-convexDLD2P-particularcases} characterises the convex-DLD2P for $L_1$-preduals. On the other hand, we also do not know if the convex-DLD2P for real $L_1$-preduals can be characterised in terms of the failure of some kind of polyhedrality. Let us emphasise the question.

\begin{prob}
We do not know if a real $L_1$-predual has the convex-DLD2P if and only if $X$ has fails to be (BD) polyhedral.
\end{prob}

\section{Polynomial Daugavet property}\label{sect:Polynomial}

Let us start by recalling the definition of the polynomial Daugavet property introduced in \cite{cgmm,cgmm2}.

\begin{definition}[\textrm{\cite{cgmm,cgmm2}}]\label{def:polynomialDaugavet}
A Banach space $X$ has the \emph{polynomial Daugavet property} if every weakly compact polynomial $P\in \mathcal{P}(X,X)$ satisfies the Daugavet equation
$$
\|\Id + P\|=1 + \|P\|.
$$
\end{definition}

Examples of Banach spaces with the polynomial Daugavet property include $C(K)$ spaces for perfect $K$, $L_1(\mu)$ and $L_\infty(\mu)$ for atomless $\mu$, and some generalisations of these examples, as non-atomic $C^*$-algebras, representable spaces, $C$-rich subspaces of $C(K)$ spaces, among others. We refer the reader to \cite{Bothelho-Santos,ChoiGarKimMaestre2014,cgmm,cgmm2,mmp,SantosCstar, santos2020} for more information and background. Let us comment that it is still unknown whether the Daugavet property always implies the polynomial Daugavet property. The following equivalent reformulation of the polynomial Daugavet property is well known and we will make use of it profusely, see \cite[Proposition 1.3 and Corollary 2.2]{cgmm} or \cite[Lemma 6.1]{cgmm2}:
\begin{quote}
$X$ has the polynomial Daugavet property if, and only if, given $x\in S_X$, $\varepsilon>0$, and a norm-one polynomial $p\in\mathcal P(X)$, there exists $y\in B_X$ and $\omega\in \T$ with $\re \omega P(y)>1-\varepsilon$ and $\Vert x+\omega y\Vert>2-\varepsilon$.
\end{quote}

Our main goal in this section is to use the results of Section~\ref{section:L1predDeltapoints} to prove the following extension of the fact that $C(K)$ spaces with the Daugavet property actually satisfy the polynomial Daugavet property.

\begin{theorem}\label{theorem:polyDPpreduL1}
The following spaces have the polynomial Daugavet property:
\begin{enumerate}[(a)]
  \item $L_1$-preduals with the Daugavet property;
  \item more in general, spaces nicely embedded in $C^b(\Omega)$ when $\Omega$ has no isolated points and for which $\{p_s\colon s\in S\}$ is (pairwise) linearly independent.
\end{enumerate}
\end{theorem}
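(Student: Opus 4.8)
The plan is to verify the polynomial Daugavet property using the well-known reformulation quoted just before the statement: given $x\in S_X$, $\varepsilon>0$, and a norm-one polynomial $p\in\mathcal P(X)$, I must produce $y\in B_X$ and $\omega\in\T$ with $\re\omega\, p(y)>1-\varepsilon$ and $\Vert x+\omega y\Vert>2-\varepsilon$. The strategy is to exploit the fact that every point of $S_X$ is a Daugavet-point (since $X$ has the Daugavet property), and to use the strong structural characterisation of Daugavet-points provided by Theorem~\ref{theo:carapuntodaugapreduL1} (respectively Proposition~\ref{prop:nicelyembedded} for case (b)), which hands us a sequence of functionals on which the behaviour of $x$ is almost extremal while simultaneously giving us freedom to prescribe values at a target $y$.

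First I would fix a norm-one polynomial $p\in\mathcal P(X)$ and, by definition of the norm, choose $y\in B_X$ with $|p(y)|>1-\varepsilon$; after multiplying $y$ by a suitable modulus-one scalar we may assume $\re p(y)>1-\varepsilon$. Now the point is to perturb $y$ only slightly, so as to keep $\re p(y)$ large (using continuity of $p$), while arranging $\Vert x+y\Vert$ close to $2$. For this I would invoke assertion (6)/(7) of Theorem~\ref{theo:carapuntodaugapreduL1}: applied to this $y$, it produces a sequence $\{x_n^{**}\}\subseteq B_{X^{**}}$ with $\limsup\Vert x-x_n^{**}\Vert=2$ and $x_n^{**}\to y$ weak-star. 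Choosing $n$ large makes $\Vert x-x_n^{**}\Vert>2-\varepsilon$; since $B_X$ is weak-star dense in $B_{X^{**}}$ and $p$ is weak-star continuous on bounded sets only in limited senses, I would instead use the weak-star density together with the lower weak-star semicontinuity of the norm (exactly as in the (7)$\Rightarrow$(1) step of the theorem) to descend to an honest $z\in B_X$ with $\Vert x-z\Vert>2-\varepsilon$ and $z$ weak-star (hence, for the polynomial's purposes, suitably) close to $y$.

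The delicate point, and the one I expect to be the main obstacle, is that polynomials are \emph{not} weak-star continuous, so I cannot directly transfer ``$z$ close to $y$'' into ``$\re p(z)$ close to $\re p(y)$''. The clean way around this is to keep $p$ evaluated at the \emph{original} $y$ and instead run the Daugavet-point machinery to find a \emph{second} unit vector whose sum with $x$ is nearly $2$, then combine. Concretely, I would rather apply the reformulation by letting $x$ play the role of the fixed sphere point, use that $x$ is a Daugavet-point to find $z\in B_X$ inside the slice $S(B_X,p\text{-related functional},\alpha)$ determined by $y$ with $\Vert x-z\Vert>2-\varepsilon$; here the slice is chosen so that membership forces $\re p(\cdot)$ to be large via a convexity/linearisation argument on the symmetric multilinear form associated to $p$. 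Replacing $z$ by $\omega z$ for an appropriate $\omega\in\T$ converts $\Vert x-z\Vert$ into $\Vert x+\omega z\Vert$ and aligns the phase of $p$, yielding the two required inequalities.

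Finally, for case (b), the entire argument goes through verbatim after replacing Theorem~\ref{theo:carapuntodaugapreduL1} by Proposition~\ref{prop:nicelyembedded}: the hypothesis that $\Omega$ (the symbol $S$ in the proposition) has no isolated points guarantees, via assertion (4) of that proposition, that for every $x\in S_X$ and every $\varepsilon>0$ the set $\{s\in S\colon |p_s(x)|>1-\varepsilon\}$ is infinite, so every point of $S_X$ is a Daugavet-point and the $c_0$-sequence construction of (6) is available. The only genuine work beyond citing these results is the linearisation step controlling the polynomial along the slice, and organising the two perturbations (one to keep $\re\omega\, p(y)$ large, one to make $\Vert x+\omega y\Vert$ large) so that they do not conflict; I expect this to reduce to a routine $\varepsilon$-bookkeeping once the Daugavet-point data from Section~\ref{section:L1predDeltapoints} is in hand.
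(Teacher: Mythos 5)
You have correctly located the obstacle (polynomials are not weak-star continuous), but your proposed way around it does not work, and the two ingredients that actually close the gap in the paper are missing from your sketch. Your plan is to find $z$ with $\Vert x-z\Vert>2-\varepsilon$ inside a \emph{slice} ``chosen so that membership forces $\re p(\cdot)$ to be large via a convexity/linearisation argument on the symmetric multilinear form associated to $p$.'' No such linearisation exists in general: the set $\{z\in B_X\colon \re p(z)>1-\alpha\}$ for a genuine polynomial $p$ need not contain, nor be contained in, any slice of $B_X$ determined by a linear functional, and if one could always trap a linear slice inside a polynomial one, the Daugavet property would imply the polynomial Daugavet property for \emph{every} Banach space --- which the paper explicitly records as an open problem. (The paper's Lemma~\ref{lemma:Santos} does produce such a localising object, but it is again a polynomial, not a linear functional, and it already \emph{assumes} the polynomial Daugavet property.) Your first variant --- perturb $y$ ``only slightly'' to keep $\re p$ large --- also fails, because the points $x_n^{**}$ produced by condition (6)/(7) are only weak-star close to $y$; in norm they can be at distance up to $2$, so continuity of $p$ gives you nothing.

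What the paper actually does (Proposition~\ref{prop:sufficient_polynomial_Daugavet}) uses condition (6) in an essential way that goes beyond the weak-star convergence of (7): the bound $\bigl\|\sum_k\lambda_k(x_k^{**}-y)\bigr\|\leq 2\max_k|\lambda_k|$ makes $e_n\mapsto x_n^{**}-y$ a \emph{bounded operator from $c_0$}, so $Q:=\widehat{P}\circ T$ is a polynomial on $c_0$, and polynomials on $c_0$ are weakly continuous on bounded sets (Dineen). This is what transfers $\re\omega P(y)>1-\varepsilon$ to $\re\omega\widehat{P}(x_n^{**})>1-\varepsilon$ for large $n$, where $\widehat{P}$ is the Aron--Berner extension. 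The second missing ingredient is the Davie--Gamelin theorem, which supplies a net in $B_X$ converging to $x_n^{**}$ in the \emph{polynomial-star} topology, so that both the norm estimate and the polynomial estimate descend from $B_{X^{**}}$ to $B_X$ simultaneously; weak-star density plus lower semicontinuity of the norm alone would only recover the norm condition. Without these two steps your argument does not go through.
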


The particular case of item (b) of the corollary above for uniform algebras whose Choquet boundaries have no isolated points is already known, see \cite[Theorem 2.7]{ChoiGarKimMaestre2014}.

The proof of Theorem~\ref{theorem:polyDPpreduL1} will follow directly from Theorem~\ref{theo:carapuntodaugapreduL1} and Proposition~\ref{prop:nicelyembedded} by using the following general result which extends \cite[Proposition 6.3]{cgmm2}.

\begin{prop}\label{prop:sufficient_polynomial_Daugavet}
Let $X$ be a Banach space. Suppose that given $x\in S_X$, $y\in B_X$, and $\omega\in \T$, there is a sequence $\{x_n^{**}\}$ in $B_{X^{**}}$ such that
$$
\limsup \|x + \omega x_n^{**}\|=2
$$
and that the linear operator from $c_0$ to $X^{**}$ defined by $e_n \longmapsto x_n^{**}-y$ for all $n\in \mathbb{N}$ is continuous. Then, $X$ has the polynomial Daugavet property.
\end{prop}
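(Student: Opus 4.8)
The plan is to use the polynomial reformulation of the Daugavet property recalled just before the statement: it suffices to fix $x\in S_X$, $\varepsilon>0$, and a norm-one polynomial $p\in\mathcal P(X)$, and produce $y\in B_X$ and $\omega\in\T$ with $\re\omega\, p(y)>1-\varepsilon$ and $\Vert x+\omega y\Vert>2-\varepsilon$. First I would choose, using $\Vert p\Vert=1$, a point $y\in B_X$ and $\omega\in\T$ with $\re\omega\, p(y)>1-\varepsilon/2$; this $y$ will remain essentially fixed, and the job is to perturb it slightly so that the second inequality holds while the first survives.

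Next I would feed this $x$, $y$, and $\omega$ into the hypothesis to obtain a sequence $\{x_n^{**}\}\subseteq B_{X^{**}}$ with $\limsup\Vert x+\omega x_n^{**}\Vert=2$ and with $e_n\longmapsto x_n^{**}-y$ inducing a bounded operator $T\colon c_0\to X^{**}$. The key consequence of $T$ being bounded is that $T(e_n)=x_n^{**}-y\longrightarrow 0$ weakly in $X^{**}$ (since $e_n\to0$ weakly in $c_0$), so $x_n^{**}\longrightarrow y$ in the weak topology of $X^{**}$, and in particular weak-star. I would then pick $n$ large enough that $\Vert x+\omega x_n^{**}\Vert>2-\varepsilon/2$, and argue that, because $x_n^{**}$ is weak-star close to $y$, one can replace $x_n^{**}$ by a genuine element $z\in B_X$ retaining both the norm lower bound and the polynomial estimate.

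The step I expect to be the main obstacle is this last replacement, because $p$ is a polynomial rather than a linear functional and is only weak-star continuous on bounded sets under extra hypotheses. The clean way around it is to handle the two requirements by two different mechanisms. For the norm bound, the weak-star density of $B_X$ in $B_{X^{**}}$ together with the lower weak-star semicontinuity of the bidual norm lets me find $z\in B_X$, weak-star close to $x_n^{**}$, with $\Vert x+\omega z\Vert>2-\varepsilon$, exactly as in the implication (7)$\Rightarrow$(1) of Theorem~\ref{theo:carapuntodaugapreduL1}. For the polynomial requirement, rather than trying to evaluate $p$ at $x_n^{**}$ I would keep the evaluation at the original $y$: the point is that the perturbation $x_n^{**}-y$ is weakly null, so moving from $y$ toward $x_n^{**}$ (and then to a nearby $z\in B_X$) changes $p$ by an arbitrarily small amount. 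Concretely, I would use that $p$, being continuous on $B_{X^{**}}$ for a suitable weak-type topology on the relevant bounded convex set, and the fact that $z$ may be chosen simultaneously weak-star close to $y$ (since $x_n^{**}$ is), to guarantee $\re\omega\, p(z)>1-\varepsilon$; the convexity/denseness argument can be run so that a single $z\in B_X$ satisfies both inequalities at once.

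Finally I would assemble the pieces: the chosen $z\in B_X$ and the scalar $\omega\in\T$ satisfy $\re\omega\, p(z)>1-\varepsilon$ and $\Vert x+\omega z\Vert>2-\varepsilon$, which is precisely the reformulated polynomial Daugavet condition, so $X$ has the polynomial Daugavet property. The delicate point throughout is ensuring that a single approximating $z$ serves both estimates; I would secure this by choosing the weak-star neighbourhood of $y$ (equivalently of $x_n^{**}$, once $n$ is fixed) small enough to control $p$ and then invoking lower semicontinuity of the norm inside that neighbourhood, so that the two requirements are met by the same point rather than by two competing approximations.
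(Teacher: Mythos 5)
Your overall skeleton (fix $y$ and $\omega$ with $\re\omega\,p(y)>1-\varepsilon$, invoke the hypothesis, pick $n$ with $\Vert x+\omega x_n^{**}\Vert>2-\varepsilon$, then descend to $B_X$) matches the paper's, but the step you yourself flag as the main obstacle is where the argument genuinely breaks. You claim that because $x_n^{**}-y$ is weakly null, ``moving from $y$ toward $x_n^{**}$ changes $p$ by an arbitrarily small amount.'' This is false in general: polynomials on a Banach space are not weakly (sequentially) continuous on bounded sets --- the standard example is $p(x)=\sum_k x_k^2$ on $\ell_2$, where $e_n\to 0$ weakly but $p(e_n)=1\not\to p(0)$. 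Weak nullity of the perturbations is exactly the information that is \emph{not} enough here; what the hypothesis actually gives you, and what the paper exploits, is that the perturbations form the image of the $c_0$-basis under a bounded operator $T\colon c_0\to X^{**}$. The paper composes the Aron--Berner extension $\widehat P$ with $T$ to obtain a polynomial $Q=\widehat P\circ T$ on $c_0$, and then uses the special fact that polynomials on $c_0$ \emph{are} weakly continuous on bounded sets (Dineen, Proposition~1.59) applied to $e_1+e_n\to e_1$ to conclude $\widehat P(x_n^{**})\to P(y)$. Without routing the argument through $c_0$, the convergence $p(x_n^{**})\to p(y)$ simply does not follow, and your proof has no substitute for this step.

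A second, related gap is the final descent from $x_n^{**}\in B_{X^{**}}$ to $z\in B_X$. You propose to use weak-star density of $B_X$ together with ``continuity of $p$ on $B_{X^{**}}$ for a suitable weak-type topology,'' but polynomials are not weak-star continuous on bounded sets either, so weak-star approximation controls the norm inequality (via lower semicontinuity of the bidual norm, as in (7)$\Rightarrow$(1) of Theorem~\ref{theo:carapuntodaugapreduL1}) yet says nothing about $p(z)$ versus $\widehat P(x_n^{**})$. The tool that makes a single $z$ work for both estimates is the Davie--Gamelin theorem \cite{dg}: $B_X$ is dense in $B_{X^{**}}$ in the \emph{polynomial-star} topology, i.e.\ there is a net $\{z_\alpha\}$ in $B_X$ with $R(z_\alpha)\to\widehat R(x_n^{**})$ for every $R\in\mathcal P(X)$ (hence also weak-star). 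Applying this to $R=p$ and to the norm estimate simultaneously is what closes the proof; your ``choose the weak-star neighbourhood small enough to control $p$'' cannot be made rigorous without it.
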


Observe that the only difference between the above result and \cite[Proposition~6.3]{cgmm2} is that, in the latter, the sequence $\{x_n^{**}\}$ has to belong to $X$.

\begin{proof}
Pick $x\in S_X$, $P\in\mathcal P(X)$ with $\|P\|=1$, and $\varepsilon>0$. Let us find an element $z\in B_X$ and $\omega\in \T$ such that $\re \omega P(z)>1-\varepsilon$ and that $\Vert x+\omega z\Vert>2-\varepsilon$, and then apply \cite[Proposition 1.3 and Corollary 2.2]{cgmm} to get that $X$ has the polynomial Daugavet property. To this end, pick $y\in B_X$ and $\omega\in \T$ such that $\re\omega P(y)>1-\varepsilon$ and let $\{x_n^{**}\}$ in $B_{X^{**}}$ the sequence given by the hypothesis. So, on the one hand, we have that
\begin{equation*}\label{eq:proof-polynomial1} \tag{$\star\star$}
\limsup \|x + \omega x_n^{**}\|=2.
\end{equation*}
Define a linear operator $T\colon c_0\longrightarrow X^{**}$ by $T(e_1)=y$ and $T(e_{n+1})=x_n^{**}-y$ for $n\in \mathbb{N}$. It follows from the hypothesis that $T$ is continuous. Therefore, $Q:=\widehat{P}\circ T\colon c_0 \longrightarrow \K$, where $\widehat P$ is the Aron-Berner extension of $P$, is a continuous polynomial on $c_0$. As $\{e_1+e_n\}$ converges weakly to $e_1$ in $c_0$, using the weak continuity of polynomials on bounded subsets of $c_0$ (see \cite[Proposition 1.59]{Dineen}), we get that $\{Q(e_1+e_n)\}\longrightarrow Q(e_1)$. In particular,
\begin{equation*}
\re \omega\widehat{P}(x_n^{**}) \longrightarrow \re \omega\widehat{P}(y)=\re\omega P(y)>1-\varepsilon.
\end{equation*}
This, together with Eq.~\eqref{eq:proof-polynomial1}, allows us to find $n\in \mathbb{N}$ such that
$$
\|x+\omega x_n^{**}\|>2-\varepsilon \quad \text{ and } \quad \re \omega \widehat{P}(x_n^{**})>1-\varepsilon.
$$
By \cite[Theorem 2]{dg} we can find a net $\{z_\alpha\}$ in $B_X$ converging to $x_n^{**}$ in the polynomial-star topology of $X^{**}$ (that is, $R(z_\alpha)\longrightarrow \widehat{R}(x_n^{**})$ for every polynomial $R\in \mathcal{P}(X)$) so, in particular, it also converges in the weak-star topology. Consequently, we can find $\alpha$ large enough so that
\begin{equation*}
\Vert x+\omega z_\alpha\Vert>2-\varepsilon \quad \text{ and } \quad \re\omega P(z_\alpha)>1-\varepsilon.
\qedhere
\end{equation*}
\end{proof}

\begin{proof}[Proof of Theorem~\ref{theorem:polyDPpreduL1}]
We only have to check that the hypotheses of Proposition~\ref{prop:sufficient_polynomial_Daugavet} are satisfied.
For $X$ being a $L_1$-predual with the Daugavet property, given $x\in S_X$, $y\in B_X$, and $\omega\in \T$, we just have to apply condition (6) of Theorem~\ref{theo:carapuntodaugapreduL1} for $-\bar{\omega}x\in S_X$ (which is a Daugavet point). For a nicely embedded space, as $S$ has no isolated points, the condition (d) of Proposition~\ref{prop:nicelyembedded} is clearly satisfies for $-\bar{\omega}x$, so item (e) of that proposition provides the proof that we are in the hypotheses of Proposition~\ref{prop:sufficient_polynomial_Daugavet}, as desired.
\end{proof}

A final result in this section will deal with the spaces of Lipschitz functions. Let us say that this result will not be used in Section \ref{section:WODP}, but we include it here as the same kind of arguments than the previous ones allows us to provide new examples of Banach spaces in which the Daugavet property and the polynomial Daugavet property are equivalent. Let us introduce briefly the necessary notation.  Given a metric space $M$ and a point $x\in M$, we will denote by $B(x,r)$ the closed ball centred at $x$ with radius $r$. Let $M$ be a metric space with a distinguished point $0 \in M$. The pair $(M,0)$ is commonly called a \emph{pointed metric space}. By an abuse of language, we will say only ``let $M$ be a pointed metric space'' and similar sentences.
The vector space of Lipschitz functions from $M$ to $\mathbb R$ will be denoted by $\justLip(M)$.
Given a Lipschitz function $f\in \justLip(M)$, we denote its Lipschitz constant by
\[ \Vert f\Vert_{L} = \sup\left\{ \frac{|f(x)-f(y)|}{d(x,y)}\colon x,y\in M,\, x\neq y\right\}. \]
This is a seminorm on $\justLip(M)$ which is a Banach space norm on the space $\Lip(M)\subseteq \justLip(M)$ of Lipschitz functions on $M$ vanishing at $0$.

\begin{prop}\label{prop:Lipschitz}
Let $M$ be a pointed complete metric space. If $\Lip(M)$ has the Daugavet property, then it actually has the polynomial Daugavet property.
\end{prop}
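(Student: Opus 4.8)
The plan is to verify the hypotheses of Proposition~\ref{prop:sufficient_polynomial_Daugavet}, which is tailored precisely for this situation. Recall that $\Lip(M)$ is canonically a dual space, $\Lip(M)=\mathcal{F}(M)^*$, where $\mathcal{F}(M)$ denotes the Lipschitz-free (Arens--Eells) space, and that the molecules $m_{u,v}:=\tfrac{\delta_u-\delta_v}{d(u,v)}\in S_{\mathcal F(M)}\subseteq S_{\Lip(M)^*}$ are weak-star continuous norming functionals acting by $m_{u,v}(f)=\tfrac{f(u)-f(v)}{d(u,v)}$. Since $M$ is real, $\T=\{-1,1\}$. Fix $x\in S_{\Lip(M)}$, $y\in B_{\Lip(M)}$ and $\omega\in\T$; the goal is to produce a sequence $\{x_n\}\subseteq B_{\Lip(M)}\subseteq B_{\Lip(M)^{**}}$ with $\limsup\|x+\omega x_n\|_L=2$ and such that $e_n\longmapsto x_n-y$ extends to a bounded operator from $c_0$ into $\Lip(M)$. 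This is exactly the hypothesis of Proposition~\ref{prop:sufficient_polynomial_Daugavet} (here we only use the bidual freedom of that result to regard $B_{\Lip(M)}$ inside $B_{\Lip(M)^{**}}$), and it will be achieved by perturbing $y$ by small Lipschitz bumps placed on pairwise disjoint regions of $M$.

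Since $\|x\|_L=1$, there are pairs $(u,v)$ with $m_{u,v}(x)$ arbitrarily close to $1$. The decisive point is that the Daugavet property of $\Lip(M)$ forces a rich geometry on $M$: by the study of the Daugavet property for Lipschitz spaces \cite{ikw}, $\Lip(M)$ can have the Daugavet property only when $M$ is a length space (in particular $M$ has no isolated points), so each such near-norming pair is joined by an almost-geodesic along which $x$ has slope close to $1$ throughout. Subdividing such an almost-geodesic and taking small balls around the nodes, we obtain infinitely many pairs $(u_n,v_n)$ with $m_{u_n,v_n}(x)\to 1$ and whose neighbourhoods $U_n:=B(u_n,r_n)\cup B(v_n,r_n)$ can be arranged to be pairwise disjoint.

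On each $U_n$ we build a Lipschitz function $x_n$ agreeing with $y$ on $M\setminus U_n$, with $\|x_n\|_L\le 1$ and $m_{u_n,v_n}(x_n)=\omega$; equivalently, a bump $g_n:=x_n-y$ supported in $U_n$ that fixes the slope of $x_n$ on the pair to be $\omega$. Granting this, $x_n\in B_{\Lip(M)}$ and $m_{u_n,v_n}(x+\omega x_n)=m_{u_n,v_n}(x)+\omega^2\to 1+1=2$, so $\limsup\|x+\omega x_n\|_L=2$. Moreover the $g_n$ have pairwise disjoint supports and $\|g_n\|_L\le\|x_n\|_L+\|y\|_L\le 2$, whence for any scalars $\lambda_1,\dots,\lambda_m$ and any $p,q\in M$ at most one $g_k$ is non-zero at $p$ and at most one at $q$; using that each $g_k$ vanishes off $U_k$ one gets $\bigl|\sum_k\lambda_k g_k(p)-\sum_k\lambda_k g_k(q)\bigr|\le 4\,(\max_k|\lambda_k|)\,d(p,q)$, so $\bigl\|\sum_{k=1}^m\lambda_k g_k\bigr\|_L\le 4\max_k|\lambda_k|$ and $e_n\mapsto x_n-y$ extends to a bounded operator from $c_0$. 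Proposition~\ref{prop:sufficient_polynomial_Daugavet} then yields the polynomial Daugavet property.

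The reduction and the $c_0$-estimate are routine; the main obstacle is the construction of the third paragraph, namely extracting infinitely many disjointly supported near-norming pairs and, above all, inserting on each of them a Lipschitz bump that pins the slope to $\omega$ while keeping the \emph{global} Lipschitz norm at most $1$ and gluing continuously to $y$ on $\partial U_n$. This is precisely where the Daugavet property is consumed, through the length-type structure it imposes on $M$. If the explicit bump construction turns out to be awkward, the flexibility of Proposition~\ref{prop:sufficient_polynomial_Daugavet}---which, unlike \cite[Proposition~6.3]{cgmm2}, allows the reinforcing sequence to live in the bidual $\Lip(M)^{**}$---permits replacing the genuine functions $x_n$ by suitable weak-star cluster points of the molecule data in $\Lip(M)^{**}$, trading the geometric bump estimate for a weak-star compactness argument.
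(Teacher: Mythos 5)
Your overall strategy coincides with the paper's: verify the hypothesis of Proposition~\ref{prop:sufficient_polynomial_Daugavet} by perturbing $y$ on infinitely many disjointly supported small regions where $x$ has slope close to $1$, and use the disjointness of supports to obtain the $c_0$-estimate. The $c_0$-estimate itself is essentially fine (for $p\in U_j$ and $q\in U_k$ with $j\neq k$ one has $|g_j(p)|\leq \|g_j\|_L\, d(p,q)$ because $q\notin U_j$ and $g_j$ vanishes off $U_j$). But the step you yourself flag as ``the main obstacle'' is a genuine gap, and moreover the exact form you posit is in general false: there need not exist $x_n$ with $\|x_n\|_L\leq 1$, agreeing with $y$ off $U_n$, and with $m_{u_n,v_n}(x_n)=\omega$ exactly. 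If $y$ already has slope $1$ from $u_n$ to some point just outside $U_n$, there is no room to raise $x_n(u_n)$ above $y(u_n)$ (nor to lower $x_n(v_n)$ below $y(v_n)$), so the slope on the pair cannot be pinned to $\omega$ without the global Lipschitz constant exceeding $1$. The fallback via weak-star cluster points in the bidual does not rescue this, since it is not explained how such cluster points would retain the boundedness of the $c_0$-operator.

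The paper resolves exactly this difficulty with a two-scale construction that your sketch lacks. By \cite[Proposition 3.4 and Theorem 3.5]{gpr} --- this is where the Daugavet property is actually consumed, rather than through the length-space characterisation --- the set of points $w$ at which the \emph{local} Lipschitz constant of $x$ exceeds $1-\varepsilon$ is infinite; one then chooses the near-norming pair $(x_n,y_n)$ inside a ball of radius $r_n^2/8$ around $w_n$, while the bump is allowed to deviate from $y$ on the much larger ball $B(w_n,r_n)$. Because $d(x_n,y_n)\leq r_n^2/4$ is tiny compared with the distance (at least $r_n-\tfrac{r_n^2}{8}$) from the pair to the region where the function must agree with $y$, the McShane extension of the modified data has Lipschitz norm at most $1+\tfrac{r_n}{8-r_n}$, and after normalising by $\Vert g_n\Vert$ both the slope condition and the norm condition hold up to errors that vanish as $n\to\infty$. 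Without this separation of scales (pair-diameter of order $r_n^2$ versus bump-radius of order $r_n$) the gluing estimate fails; the construction you defer is precisely the mathematical content of the proof.
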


\begin{proof}
We will follow the lines of the proof of \cite[Proposition 3.3]{lr}. Pick $f,g\in S_{\Lip(M)}$. Notice that, by \cite[Proposition 3.4 and Theorem 3.5]{gpr}, for every $\varepsilon>0$ the set
$$
V_\varepsilon=\left\{x\in M\colon \inf\limits_{\beta>0}\Vert f_{|B(x,\beta)}\Vert>1-\varepsilon\right\}
$$
is infinite, so we can take a sequence $\{w_n\}$ of different points of $V_\frac{1}{n}$. An inductive argument allows to take $r_n>0$ small enough so that $d(w_n,w_m)\geq 2r_n$ holds for every $n> m$ and such that $\sum_{n=1}^\infty \frac{r_n}{8-r_n}<\infty$. By the property defining $w_n$, for every $n\in\mathbb N$ we can take a pair of different points $x_n,y_n\in B\left(w_n,\frac{r_n^2}{8}\right)$ such that $$f(x_n)-f(y_n)>(1-\frac{1}{n})d(x_n,y_n).$$ Now, we define $g_n\colon \bigl[M\setminus B(w_n,r_n)\bigr]\cup\{x_n,y_n\}\longrightarrow \mathbb R$ by $g_n(t)=g(t)$ if $t\neq x_n$ and $g_n(x_n)=g(y_n)+d(x_n,y_n)$. Let us estimate the norm of $g_n$. First, $g_n(x_n)-g_n(y_n)=d(x_n,y_n)$, so $\|g_n\|\geq 1$. Next, we only have to compare slopes of $g_n$ at $x_n$ and $z\notin B(w_n,r_n)$. Notice that $d(z,x_n)\geq r_n-\frac{r_n^2}{8}$ by the triangle inequality and, similarly, $d(z,y_n)\geq r_n-\frac{r_n^2}{8}$. Now,
\begin{align*}
\frac{\vert g_n(x_n)-g_n(z)\vert}{d(x_n,z)} & \leq \frac{\vert g(y_n)-g(z)\vert+d(x_n,y_n)}{d(x_n,z)} \leq \frac{d(y_n,z)+d(x_n,y_n)}{d(z,x_n)} \\
& \leq \frac{d(z,x_n)+d(x_n,y_n)}{d(z,x_n)}=1+\frac{2d(x_n,y_n)}{d(z,x_n)} \\ &\leq 1+\frac{\frac{r_n^2}{8}}{r_n-\frac{r_n^2}{8}}=1+\frac{r_n}{8-r_n}.
\end{align*}
By McShane's extension Theorem (see \cite[Theorem 1.33]{Weaver2}, for instance), we can extend $g_n$ to be defined in the whole of $M$ still satisfying
$$
1\leq \Vert g_n\Vert\leq 1+\frac{r_n}{8-r_n}.
$$
We clearly have that $\supp(g_n-g)\subseteq B(w_n,r_n)$. This implies that
$$
d(\supp(g_n-g),\supp(g_m-g))>0\ \text{ for every $n\neq m$}.
$$
Then, the arguments in the proof of \cite[Lemma~1.5]{ccgmr} implies that the operator $T\colon c_0\longrightarrow \Lip(M)$ given by $T(e_n):=g_n-g$ for every $n\in \mathbb{N}$ is continuous (even more, $\Vert T\Vert\leq 2$). Using that $\left\Vert \frac{g_n}{\Vert g_n\Vert}-g_n\right\Vert<\frac{r_n}{8-r_n}$ for every $n\in\mathbb N$, it is routine to prove that the operator $c_0\longrightarrow \Lip(M)$ given by $e_n\longmapsto \frac{g_n}{\Vert g_n\Vert}-g$ is bounded (actually, its norm is bounded by $2+\sum_{n=1}^\infty \frac{r_n}{8-r_n}$). On the other hand,
$$\left\Vert f+\frac{g_n}{\Vert g_n\Vert}\right\Vert\geq \frac{\bigl(f+\frac{g_n}{\Vert g_n\Vert}\bigr)(x_n)-\bigl(f+\frac{g_n}{\Vert g_n\Vert}\bigr)(y_n)}{d(x_n,y_n)}>1-\frac{1}{n}+\frac{1}{1+\frac{r_n}{8-r_n}}.$$
Finally, an application of Proposition \ref{prop:sufficient_polynomial_Daugavet} concludes that $\Lip(M)$ has the polynomial Daugavet property, as desired.
\end{proof}

\section{Weak operator Daugavet property and polynomial weak operator Daugavet property}\label{section:WODP}

Let us start the section by recalling the definition of the operator Daugavet property introduced in \cite{rtv} with the aim of providing a weaker version.

\begin{definition}[\textrm{\cite[Definition 4.1]{rtv}}]\label{def:ODP}
Let $X$ be a Banach space. We say that $X$ has the \emph{operator Daugavet property} (\emph{ODP} in short) if, given $x_1,\ldots x_n\in S_X$, $\varepsilon>0$, and a slice $S$ of $B_X$, there exists an element $x\in S$ such that, for every $x'\in B_X$ we can find an operator $T\colon X\longrightarrow X$ with $\Vert T\Vert\leq 1+\varepsilon$, $\Vert T(x_i)-x_i\Vert<\varepsilon$ for every $i\in \{1,\ldots,n\}$ and $T(x)=x'$.
\end{definition}

This property was introduced in the aforementioned paper \cite{rtv} as a sufficient condition for a pair of Banach spaces $X$ and $Y$ to get that $X\pten Y$ have the Daugavet property. Examples of spaces satisfying the ODP are $L_1$-preduals with the Daugavet property and $L_1(\mu,Y)$ when $\mu$ is atomless and $Y$ is arbitrary. Besides, this property is stable by finite $\ell_\infty$ sums.

Our strategy for proving the main results of this paper will be to consider the following weakening of the ODP.

\begin{definition}\label{def:WODP}
Let $X$ be a Banach space. We say that $X$ has the \emph{weak operator Daugavet property} (\emph{WODP} in short) if, given $x_1,\ldots x_n\in S_X$, $\varepsilon>0$, a slice $S$ of $B_X$ and $x'\in B_X$, we can find $x\in S$ and $T\colon X\longrightarrow X$ with $\Vert T\Vert\leq 1+\varepsilon$, $\Vert T(x_i)-x_i\Vert<\varepsilon$ for every $i\in \{1,\ldots,n\}$ and $\Vert T(x)-x'\Vert<\varepsilon$.
\end{definition}

The ODP clearly implies the WODP. Actually, the following result also holds.

\begin{remark}\label{remark:WODPimpliesDaugavet}
If $X$ is a Banach space with the WODP, then $X$ has the Daugavet property. Indeed, given $x\in S_X$, an slice $S$ of $B_X$, and $\varepsilon>0$, taking $x'=-x$ we can find, by the definition of WODP, an element $y\in S$ and an operator $T\colon X\longrightarrow X$ with $\Vert T\Vert\leq 1+\varepsilon$ and such that $\max\{\Vert T(x)-x\Vert,\Vert T(y)+x\Vert\}<\varepsilon$. It is not difficult to prove that $\Vert x+y\Vert\geq \frac{2-2\varepsilon}{1+\varepsilon}$.
\end{remark}

Our first interest in the WODP is that it is stable by projective tensor product, a result which improves the main ones of \cite{rtv}.

\begin{theorem}\label{theorem:WODP}
Let $X$ and $Y$ be two Banach spaces with the WODP. Then, $X\pten Y$ has the WODP.
\end{theorem}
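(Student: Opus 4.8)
The plan is to prove that $X \pten Y$ has the WODP directly from the definition, by exploiting the fact that the unit ball of $X \pten Y$ is the closed convex hull of elementary tensors $x \otimes y$ with $x \in S_X$, $y \in S_Y$. First I would reduce to the situation where the data are given in a convenient finite form. We are given $u_1, \dots, u_n \in S_{X \pten Y}$, a slice $S = S(B_{X\pten Y}, \Phi, \alpha)$ (with $\Phi \in \mathcal{B}(X,Y)$ a norm-one bilinear form), a target $u' \in B_{X\pten Y}$, and $\varepsilon > 0$. Using density of the algebraic tensor product, I would approximate each $u_i$ and $u'$ within $\varepsilon$-order by finite sums of elementary tensors, so that up to relabelling we may assume all the relevant vectors are finite combinations $\sum_j x_{ij} \otimes y_{ij}$, with the $x_{ij} \in S_X$ and $y_{ij} \in S_Y$. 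Since a slice is weak-open, a point in $S$ can likewise be taken to be an elementary tensor $x_0 \otimes y_0$ with $x_0 \in S_X$, $y_0 \in S_Y$ and $\re \Phi(x_0, y_0) > 1 - \alpha$; this is where the representation $B_{X\pten Y} = \cconv(S_X \otimes S_Y)$ is essential.

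The core of the argument is to produce the operator on $X \pten Y$ as a tensor product of operators coming separately from $X$ and $Y$. The key observation is that if $T_X \colon X \to X$ and $T_Y \colon Y \to Y$ are bounded, then $T_X \otimes T_Y$ extends to a bounded operator on $X \pten Y$ with $\|T_X \otimes T_Y\| \le \|T_X\| \, \|T_Y\|$, acting by $(T_X \otimes T_Y)(x \otimes y) = T_X(x) \otimes T_Y(y)$; this is a standard property of the projective norm. So I would apply the WODP of $X$ to the finite family of all first coordinates $\{x_{ij}\}$ (with slice and target extracted from the $X$-factor) to get $x_1^{\ast} \in S_X$ and $T_X$ with $\|T_X\| \le 1 + \delta$, $T_X$ nearly fixing each $x_{ij}$, and $T_X$ sending $x_1^{\ast}$ near a prescribed target; symmetrically apply the WODP of $Y$ to the second coordinates $\{y_{ij}\}$ to obtain $y_1^{\ast} \in S_Y$ and $T_Y$. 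Then $T := T_X \otimes T_Y$ should nearly fix each $u_i$ (because $T_X \otimes T_Y$ applied to $x_{ij} \otimes y_{ij}$ is close to $x_{ij} \otimes y_{ij}$ when both factors are nearly fixed) and send an appropriate point of $S$ close to $u'$.

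The main obstacle, which I expect to require the most care, is reconciling the slice condition and the target condition through the bilinear structure. The slice $S$ is defined by a single bilinear functional $\Phi$, not by a product of linear functionals, so I cannot simply run the two one-dimensional WODPs against independent slices: I must first locate a single elementary tensor $x_0 \otimes y_0 \in S$ and then arrange, via the WODP applied in each coordinate, that $T_X(x_0) \otimes T_Y(y_0)$ lands near $u'$. The natural route is to fix $x_0 \otimes y_0 \in S$ first, then view the problem in $X$ as: fix the $x_{ij}$ and send $x_0$ to some chosen $a \in B_X$; and in $Y$: fix the $y_{ij}$ and send $y_0$ to some chosen $b \in B_Y$, where $a \otimes b$ (or a short sum $\sum_k a_k \otimes b_k$ approximating $u'$) is the target. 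A technical subtlety is that $u'$ is a genuine tensor, not an elementary one, so to hit it I would likely need to write $u' \approx \sum_k x_k' \otimes y_k'$ and choose the point of the slice and the operators to produce the corresponding sum; handling this linear combination cleanly, while keeping the norm of $T$ below $1 + \varepsilon$ and controlling all the accumulated $\varepsilon$'s via a single small parameter $\delta$ chosen at the outset, is the delicate bookkeeping step. Everything else is routine triangle-inequality estimation, so I would set up the $\delta$-to-$\varepsilon$ dependence carefully before beginning the estimates.
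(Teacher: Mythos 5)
Your overall strategy coincides with the paper's: approximate the $u_i$ and $u'$ by finite convex combinations of elementary tensors, locate an elementary tensor $x_0\otimes y_0$ in the slice, apply the WODP in each factor separately, and take $T=T_X\otimes T_Y$, whose norm is controlled by $\|T_X\|\,\|T_Y\|$. The reduction steps and the final triangle-inequality estimates are as you describe.

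However, there is a genuine gap at precisely the point you dismiss as ``delicate bookkeeping.'' Since $u'\approx\sum_{k=1}^{t}\mu_k\, x_k'\otimes y_k'$ is a sum of $t$ elementary tensors while $(T_X\otimes T_Y)(x_0\otimes y_0)$ is a single elementary tensor, you are forced (as you note) to take the point of $S$ of the form $z=\sum_{k=1}^t\mu_k\, x_k\otimes y_k$ with each $x_k\otimes y_k\in S$, and you then need a \emph{single} operator $T_X$ with $\|T_X\|\le 1+\varepsilon$ that simultaneously almost fixes all the $x_{ij}$ and sends $x_k$ near $x_k'$ for \emph{every} $k=1,\dots,t$, with each $x_k$ lying in the slice $\{x\in B_X\colon \re \Phi(x,y_0)>1-\alpha\}$ (and similarly for $Y$, with slices depending on the $x_k$ already chosen). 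The definition of the WODP only provides one point in one slice with one target per operator; applying it $t$ times yields $t$ different operators, and there is then nothing to tensor. Upgrading the WODP to this multi-slice, multi-target form is not routine: the paper isolates it as a separate lemma (Lemma~\ref{lema:finislicesWODP}) proved by induction on $t$, where the induction step composes two operators $T\circ G$ --- the operator $T$ from the inductive hypothesis is required in addition to almost fix the \emph{target} $y_{k+1}'$ of the new slice, so that the composition still sends $G(y_{k+1})$ near $y_{k+1}'$. Your proposal neither states this strengthened property nor indicates how a single operator would be produced, so as written the plan cannot be executed; supplying this lemma is the one substantive idea missing from your outline.
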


We need the following technical lemma.

\begin{lemma}\label{lema:finislicesWODP}
Let $X$ be a Banach space with the WODP. Then, for every $x_1,\ldots, x_n\in S_X$, for every $y_1',\ldots, y_k'\in B_X$, every slices $S_1,\ldots, S_k$ of $B_X$ and every $\varepsilon>0$ we can find $y_j\in S_j$ for every $1\leq j\leq k$ an operator $T\colon X\longrightarrow X$ with $\Vert T\Vert\leq 1+\varepsilon$ satisfying that
$$
\Vert T(x_i)-x_i\Vert<\varepsilon\ \text{ for $1\leq i\leq n$} \quad \text{and} \quad \Vert T(y_j)-y_j'\Vert<\varepsilon\ \text{ for $1\leq j\leq k$}.
$$
\end{lemma}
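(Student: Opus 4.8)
The plan is to prove the statement by induction on the number $k$ of slices, combining at each step two applications of the WODP through a composition of operators. Before starting, I would record the elementary observation that the WODP remains valid when the ``fixed'' points are only required to lie in $B_X$ rather than in $S_X$: if $z\in B_X\setminus\{0\}$, then applying the WODP to $z/\Vert z\Vert\in S_X$ with tolerance $\delta$ produces an operator $T$ with $\Vert T(z)-z\Vert=\Vert z\Vert\,\Vert T(z/\Vert z\Vert)-z/\Vert z\Vert\Vert<\delta$, while $z=0$ is fixed by every operator. Thus I may freely feed points of $B_X$ into the ``fixed'' slot, and I would in fact prove the lemma in the formally stronger form where $x_1,\dots,x_n\in B_X$; the stated version is then the particular case $x_i\in S_X$.

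The base case $k=1$ is exactly this extended WODP. For the inductive step, assume the statement for $k-1$ slices, and let $x_1,\dots,x_n\in B_X$, $y_1',\dots,y_k'\in B_X$, slices $S_1,\dots,S_k$, and $\varepsilon>0$ be given; fix small parameters $\delta,\delta'>0$ to be specified at the end. First I would apply the inductive hypothesis to the slices $S_1,\dots,S_{k-1}$ with targets $y_1',\dots,y_{k-1}'$ and with the \emph{enlarged} list of fixed points $x_1,\dots,x_n,y_k'$ and tolerance $\delta$; this yields points $y_j\in S_j$ for $1\leq j\leq k-1$ and an operator $R\colon X\to X$ with $\Vert R\Vert\leq 1+\delta$, $\Vert R(x_i)-x_i\Vert<\delta$, $\Vert R(y_k')-y_k'\Vert<\delta$, and $\Vert R(y_j)-y_j'\Vert<\delta$ for $j<k$. \emph{Now that the points $y_1,\dots,y_{k-1}$ are known}, I would apply the WODP to the single slice $S_k$ with target $y_k'$ and with fixed points $x_1,\dots,x_n,y_1,\dots,y_{k-1}$ and tolerance $\delta'$, obtaining $y_k\in S_k$ and $T_k\colon X\to X$ with $\Vert T_k\Vert\leq 1+\delta'$, $\Vert T_k(x_i)-x_i\Vert<\delta'$, $\Vert T_k(y_j)-y_j\Vert<\delta'$ for $j<k$, and $\Vert T_k(y_k)-y_k'\Vert<\delta'$.

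The operator I would take is the composition $T:=R\circ T_k$, and the reason for ordering the two applications exactly as above is that $R$ has been arranged to (approximately) fix the last target $y_k'$, while $T_k$ has been arranged to (approximately) fix the previously chosen points $y_1,\dots,y_{k-1}$, so neither group of points is destroyed by the other operator. Concretely $\Vert T\Vert\leq(1+\delta)(1+\delta')$, and a routine triangle-inequality estimate using $\Vert R\Vert\leq 1+\delta$ gives, for all three types of vectors,
\[
\Vert T(x_i)-x_i\Vert,\ \Vert T(y_j)-y_j'\Vert\ (j<k),\ \Vert T(y_k)-y_k'\Vert\ \leq\ \delta+(1+\delta)\delta'.
\]
For instance $T(y_j)=R(T_k(y_j))$ lies within $(1+\delta)\delta'$ of $R(y_j)$, which lies within $\delta$ of $y_j'$; and $T(y_k)=R(T_k(y_k))$ lies within $(1+\delta)\delta'$ of $R(y_k')$, which lies within $\delta$ of $y_k'$. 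Choosing $\delta,\delta'>0$ small enough that $(1+\delta)(1+\delta')\leq 1+\varepsilon$ and $\delta+(1+\delta)\delta'<\varepsilon$ then closes the induction.

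I do not expect any serious obstacle: the entire content is the bookkeeping of which points each of the two operators must preserve. The only genuinely careful point is the one highlighted above, namely that the target $y_k'$ of the slice handled by the WODP must be inserted into the fixed-point list of the inductive step, and symmetrically the slice-points $y_1,\dots,y_{k-1}$ produced by the inductive step must be inserted into the fixed-point list of the WODP application; getting these two insertions right is precisely what makes the composition $R\circ T_k$ respect all constraints simultaneously. Keeping the norm of the composition below $1+\varepsilon$ is immediate once $\delta,\delta'$ are chosen small.
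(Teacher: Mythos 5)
Your proposal is correct and follows essentially the same route as the paper: induction on the number of slices, applying the inductive hypothesis with the last target $y_k'$ added to the list of (approximately) fixed points, then a single application of the WODP with the previously produced slice points added to the fixed list, and composing the two operators in the same order. The only (welcome) refinements are that you make explicit the harmless extension of the WODP to fixed points in $B_X$ rather than $S_X$, and that you track the tolerances $\delta,\delta'$ explicitly where the paper simply says ``up to making a choice of a smaller $\varepsilon$''.
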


\begin{proof}
Let us prove the result by induction on $k$. For the case $k=1$ there is nothing to prove. Now assume by induction hypothesis that the result holds for $k$, and let us prove the case $k+1$. To this end, pick $x_1,\ldots, x_n\in S_X$, $\varepsilon>0$, $S_1,\ldots, S_{k+1}$ slices of $B_X$ and $y_1',\ldots, y_{k+1}'\in B_X$, and let us find an operator $\phi$ witnessing the thesis of the lemma.

To this end, by the induction hypothesis, we can find $y_i\in S_i$ for $1\leq i\leq k$ and an operator $T\colon X\longrightarrow X$ with $\Vert T\Vert\leq 1+\varepsilon$ and such that
\begin{enumerate}
    \item $\Vert T(x_i)-x_i\Vert<\varepsilon$ for every $1\leq i\leq n$ and $\Vert T(y_{k+1}')-y_{k+1}'\Vert<\varepsilon$.
    \item $\Vert T(y_i)-y_i'\Vert<\varepsilon$ holds for every $1\leq i\leq k$.
\end{enumerate}

Now, by the definition of the WODP we can find $y_{k+1}\in S_{k+1}$ and an operator $G\colon X\longrightarrow X$ with $\Vert G\Vert\leq 1+\varepsilon$ and such that
\begin{itemize}
    \item[(3)] $\Vert G(x_i)-x_i\Vert<\varepsilon$ for $1\leq i\leq n$ and $\Vert G(y_j)-y_j\Vert<\varepsilon$ for $1\leq j\leq k$.
    \item[(4)] $\Vert G(y_{k+1})-y_{k+1}'\Vert<\varepsilon$.
\end{itemize}
Define $\phi:=T\circ G\colon X\longrightarrow X$ and let us prove that $\phi$ satisfies our purposes. First, $\Vert \phi\Vert\leq (1+\varepsilon)^2$.
Next, given $1\leq i\leq n$ we have
\[\begin{split}
    \Vert \phi(x_i)-x_i\Vert &=\Vert T(G(x_i))-T(x_i)+T(x_i)-x_i\Vert \\ & \leq \Vert T(G(x_i)-x_i)\Vert+\Vert T(x_i)-x_i\Vert\\
    & \leq \Vert T\Vert \Vert G(x_i)-x_i\Vert+\varepsilon\\
    & < (1+\varepsilon)\varepsilon+\varepsilon=(2+\varepsilon)\varepsilon
\end{split}\]
just combining (1) and (3). Moreover, given $i\in\{1,\ldots, k\}$, we obtain
\[\begin{split}
    \Vert \phi(y_i)-y_i'\Vert & =\Vert T(G(y_i))-T(y_i)+T(y_i)-y_i'\Vert\\ & \leq   \Vert T(G(y_i)-y_i)\Vert+\Vert T(y_i)-y_i'\Vert\\
    & \leq \Vert T\Vert \Vert G(y_i)-y_i\Vert+\varepsilon\\
    & <(1+\varepsilon)\varepsilon+\varepsilon=(2+\varepsilon)\varepsilon
\end{split}
\]
by combining (2) and (3). Finally,
\[\begin{split}
    \Vert \phi(y_{k+1})-y_{k+1}'\Vert& =\Vert T(G(y_{k+1}))-T(y_{k+1}')+T(y_{k+1}')-y_{k+1}'\Vert\\
    & \leq \Vert T(G(y_{k+1})-y_{k+1}')\Vert+\Vert T(y_{k+1}')-y_{k+1}'\Vert\\
    & <\Vert T\Vert \Vert G(y_{k+1})-y_{k+1}'\Vert+\varepsilon\\
    & <(1+\varepsilon)\varepsilon+\varepsilon=(2+\varepsilon)\varepsilon
\end{split}
\]
by combining (1) and (4). This proves, up to making a choice of a smaller $\varepsilon$, that $\phi$ is our desired operator.
\end{proof}

We are now ready to give the pending proof.

\begin{proof}[Proof of Theorem~\ref{theorem:WODP}] Let $Z:=X\pten Y$.
Fix $z_1,\ldots, z_n\in B_{Z}$, $\varepsilon>0$, $z'\in B_Z$, and a slice $S=S(B_Z,B,\alpha)$ for certain norm-one bilinear form $B\colon X\times Y\longrightarrow \mathbb K$.

By a density argument, we can assume with no loss of generality that $$z_i=\sum_{j=1}^{n_i} \lambda_{ij} a_{ij}\otimes b_{ij}\in \conv(S_X\otimes S_Y)\qquad i\in\{1,\ldots,n\}$$ and, in a similar way, that $z'=\sum_{k=1}^t \mu_k x_k'\otimes y_k'\in \conv(S_X\otimes S_Y)$.

Take $u_0\otimes v_0\in S$ with $u_0\in B_X$ and $v_0\in B_Y$, which means $\re B(u_0,v_0)>1-\alpha$ or, equivalently, that $u_0\in S':=\{z\in B_X\colon \re B(z,v_0)>1-\alpha\}$, which is a slice of $B_X$. By Lemma~\ref{lema:finislicesWODP}, for every $1\leq k\leq t$ we can find an element $x_k\in S'$ (which implies that $x_k\otimes v_0\in S$) and an operator $T\colon X\longrightarrow X$ with $\Vert T\Vert\leq 1+\varepsilon$, satisfying that
$$
\Vert T(a_{ij})-a_{ij}\Vert<\varepsilon \ \text{ for every $i,j$} \quad \text{and} \quad \Vert T(x_k)-x_k'\Vert<\varepsilon \ \text{ for every $k$}.
$$
Notice that $v_0\in S_k:=\{z\in B_Y\colon \re B(x_k,z)>1-\alpha\}$ for every $k\in\{1,\ldots, t\}$. Again, by the previous lemma, for every $k\in\{1,\ldots, t\}$ we can find $y_k\in S_k$ (which means that $x_k\otimes y_k\in S$) and an operator $U\colon Y\longrightarrow Y$ with $\Vert U\Vert\leq 1+\varepsilon$ satisfying that
$$
\Vert U(b_{ij})-b_{ij}\Vert<\varepsilon\ \text{ for every $i,j$} \quad \text{and} \quad \Vert U(y_k)-y_k'\Vert<\varepsilon\ \text{for $1\leq k\leq t$}.
$$

Now, define $z:=\sum_{k=1}^t \mu_k x_k\otimes y_k$. Notice that $z\in S$ since
$$
\re B(z)=\sum_{k=1}^t \mu_k  \re B(x_k,y_k)>(1-\alpha)\sum_{k=1}^t \mu_k=1-\alpha.
$$
Finally define $\phi:=T\otimes U\colon Z\longrightarrow Z$. By \cite[Proposition 2.3]{rya}, $\Vert \phi\Vert=\Vert T\Vert \Vert U\Vert\leq (1+\varepsilon)^2$. On the other hand, given $1\leq i\leq n$, we get
\[
\begin{split}
    \Vert \phi(z_i)-z_i\Vert& =\left\Vert \sum_{j=1}^{n_i}\lambda_{ij}( T(a_{ij})\otimes T(b_{ij})-a_{ij}\otimes b_{ij})\right\Vert\\
    & \leq \sum_{j=1}^{n_i}\lambda_{ij} \Vert T(a_{ij})\otimes T(b_{ij})-T(a_{ij})\otimes b_{ij}+T(a_{ij})\otimes b_{ij}-a_{ij}\otimes b_{ij}\Vert\\
    & \leq \sum_{j=1}^{n_i}\lambda_{ij} (\Vert T(a_{ij})\Vert \Vert T(b_{ij})-b_{ij}\Vert+\Vert T(a_{ij})-a_{ij}\Vert \Vert b_{ij}\Vert)\\
    & <\sum_{j=1}^{n_i}\lambda_{ij} ((1+\varepsilon)\varepsilon+\varepsilon) =(2+\varepsilon)\varepsilon\sum_{j=1}^{n_i}\lambda_{ij}=(2+\varepsilon)\varepsilon.
\end{split}
\]
Similar estimates to the previous ones prove that $\Vert \phi(z)-z'\Vert<(2+\varepsilon)\varepsilon$.
\end{proof}

Our next goal is to introduce the polynomial WODP. We need some notation. Given $x_1,\ldots, x_n\in S_X, \varepsilon>0$, and $x'\in B_X$, write
$$
\OF(x_1,\ldots, x_n;x',\varepsilon):=\left\{y\in B_X\colon \begin{array}{c}
 \text{ there exits }T\colon X\longrightarrow X\\
\Vert T\Vert\leq 1+\varepsilon,\ \Vert T(y)-x'\Vert<\varepsilon,\\
\Vert T(x_i)-x_i\Vert<\varepsilon\ \forall i\in\{1,\ldots, n\}
\end{array}\right\}.
$$
Notice that a Banach space $X$ has the WODP if, and only if, all the sets of the form $\OF(x_1,\ldots, x_n;x',\varepsilon)$ are norming for $X^*$, that is, if and only if
$$
B_X=\cconv\bigl(\OF(x_1,\ldots, x_n;x',\varepsilon)\bigr)
$$
regardless of $x_1,\ldots, x_n, x',\varepsilon$. Actually, the sets are even more massive in this case, as the following result exhibits.

\begin{lemma}
Let $X$ be a Banach space with the WODP. Then, for every $x_1,\ldots, x_n\in S_X$, $x'\in B_X$, and $\varepsilon>0$ the set $\OF(x_1,\ldots, x_n;x',\varepsilon)$ intersects any convex combination of slices of $B_X$. In particular, the set is weakly dense.
\end{lemma}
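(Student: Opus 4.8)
The plan is to reduce everything to Lemma~\ref{lema:finislicesWODP}, which is precisely the WODP upgraded so as to handle \emph{several} slices at once by means of a \emph{single} operator. The crucial observation is that, when one feeds that lemma the \emph{same} target vector $x'$ on each of the slices, the resulting operator can afterwards be tested against any convex combination of the points it produces, because the target $x'$ survives convex combinations (as $\sum_j \lambda_j x'=x'$).

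More precisely, I would fix $x_1,\ldots,x_n\in S_X$, $x'\in B_X$, $\varepsilon>0$, and a convex combination of slices
$$
C=\Bigl\{\textstyle\sum_{j=1}^k \lambda_j y_j \colon y_j\in S_j\Bigr\},
$$
where $S_1,\ldots,S_k$ are slices of $B_X$, $\lambda_j\geq 0$, and $\sum_{j=1}^k \lambda_j=1$. Applying Lemma~\ref{lema:finislicesWODP} to the points $x_1,\ldots,x_n$, the slices $S_1,\ldots,S_k$, and the targets $y_1'=\cdots=y_k'=x'$ yields $y_j\in S_j$ and an operator $T\colon X\longrightarrow X$ with $\Vert T\Vert\leq 1+\varepsilon$ such that $\Vert T(x_i)-x_i\Vert<\varepsilon$ for all $i$ and $\Vert T(y_j)-x'\Vert<\varepsilon$ for all $j$. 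Setting $y:=\sum_{j=1}^k \lambda_j y_j\in C$, the triangle inequality together with $\sum_j\lambda_j=1$ gives
$$
\Vert T(y)-x'\Vert=\Bigl\Vert \sum_{j=1}^k \lambda_j\bigl(T(y_j)-x'\bigr)\Bigr\Vert\leq \sum_{j=1}^k \lambda_j\Vert T(y_j)-x'\Vert<\varepsilon.
$$
Thus the very same operator $T$ witnesses that $y\in \OF(x_1,\ldots,x_n;x',\varepsilon)$, and since $y\in C$ we conclude $\OF(x_1,\ldots,x_n;x',\varepsilon)\cap C\neq\emptyset$, as wanted.

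For the final assertion I would invoke the standard fact that every nonempty relatively weakly open subset of $B_X$ contains a convex combination of slices of $B_X$. Since we have just shown that $\OF(x_1,\ldots,x_n;x',\varepsilon)$ meets every convex combination of slices, it meets every nonempty relatively weakly open subset of $B_X$, that is, it is weakly dense in $B_X$.

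There is no serious obstacle here: all the analytic content is already packaged in Lemma~\ref{lema:finislicesWODP}. The only point requiring care is the decision to impose the \emph{common} target $x'$ on all the slices, which is exactly what allows a single operator $T$ to work simultaneously for the whole convex combination; had the targets differed, the convexity estimate above would break down. The weak-density statement is then a soft topological consequence.
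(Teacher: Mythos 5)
Your argument is correct and coincides with the paper's own proof: both apply Lemma~\ref{lema:finislicesWODP} with the common target $x'$ on every slice, use the same convexity/triangle-inequality estimate to show that the resulting convex combination lies in $\OF(x_1,\ldots,x_n;x',\varepsilon)$, and then deduce weak density from Bourgain's lemma. Nothing to add.
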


\begin{proof}
Pick $C=\sum_{k=1}^t\lambda_k S_k$ to be a convex combination of slices of $B_X$. Pick $x_1,\ldots, x_n\in S_X$, $\varepsilon>0$ and $x'\in B_X$. Lemma~\ref{lema:finislicesWODP} allows us to find $y_k\in S_k$ for every $k\in\{1,\ldots,t\}$ and an operator $T\colon X\longrightarrow X$ satisfying that $\Vert T\Vert\leq 1+\varepsilon$, $\Vert T(x_i)-x_i\Vert<\varepsilon$ for $i\in\{1,\ldots,n\}$, and $\Vert T(y_k)-x'\Vert<\varepsilon$ for $k\in\{1,\ldots,t\}$. Now,
\[\begin{split}\left\Vert T\left(\sum_{k=1}^t \lambda_k y_k\right)-x'\right\Vert & = \left\Vert \sum_{k=1}^t \lambda_k T(y_k)-\sum_{k=1}^t \lambda_k x'\right\Vert \\ & \leq \sum_{k=1}^t \lambda_k \Vert T(y_k)-x'\Vert <\varepsilon\sum_{k=1}^t\lambda_k=\varepsilon.
\end{split}
\]
This implies that $\OF(x_1,\ldots, x_n;x',\varepsilon)\cap C\neq \emptyset$, as desired. Finally, the weak denseness of $\OF(x_1,\ldots, x_n;x',\varepsilon)$ follows since every non-empty weakly open subset of $B_X$ contains a convex combination of slices of $B_X$ by Bourgain's Lemma (see \cite[Lemma II.1]{ggms} for instance).
\end{proof}

Let us now consider the definition of polynomial WODP which is an stronger version of the WODP.

\begin{definition}\label{defi:polywodp}
Let $X$ be a Banach space. We say that $X$ has the \emph{polynomial weak operator Daugavet property} (\emph{polynomial WODP} in short) if, for every $P\in {\mathcal P(X)}$ with $\|P\|=1$, every $x_1,\ldots, x_n\in S_X$, $x'\in B_X$, $\alpha>0$ and $\varepsilon>0$, there exists $y\in \OF(x_1,\ldots, x_n;x',\varepsilon)$ and $\omega\in \T$ with $\re\omega P(y)>1-\alpha$.
\end{definition}

\begin{remark}
The polynomial WODP implies the WODP and the polynomial Daugavet property. Indeed, the first assertion is immediate since bounded linear functionals are in particular continuous polynomials. The second assertion follows with similar ideas behind the implication WODP $\Rightarrow$ Daugavet property in Remark~\ref{remark:WODPimpliesDaugavet}.
\end{remark}

Now it is time to exhibit examples of Banach spaces with the polynomial WODP, which in turn provides examples of Banach spaces with the WODP.

The first family of examples is the one of $L_1$-preduals with the Daugavet property.

\begin{proposition}\label{prop:WODPpolinomial-predual}
If $X$ is an $L_1$-predual with the Daugavet property, then $X$ has the polynomial WODP.
\end{proposition}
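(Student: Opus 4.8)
The plan is to verify that every $L_1$-predual $X$ with the Daugavet property satisfies the defining condition of the polynomial WODP from Definition~\ref{defi:polywodp}. The key observation is that the proof of Theorem~\ref{theorem:polyDPpreduL1} already produces, via condition~(6) of Theorem~\ref{theo:carapuntodaugapreduL1}, exactly the kind of $c_0$-sequence in $B_{X^{**}}$ that makes the Aron--Berner extension argument of Proposition~\ref{prop:sufficient_polynomial_Daugavet} work. So I would combine those two ingredients with the \emph{operator}-theoretic content coming from the ODP of $L_1$-preduals with the Daugavet property (recalled in the paragraph after Definition~\ref{def:ODP}, taken from \cite{rtv}).

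\emph{First step.} Fix $P\in\mathcal P(X)$ with $\|P\|=1$, points $x_1,\dots,x_n\in S_X$, $x'\in B_X$, and $\alpha,\varepsilon>0$. Since $\|P\|=1$, choose $y_0\in B_X$ and $\omega\in\T$ with $\re\,\omega P(y_0)>1-\alpha/2$. The element $-\bar\omega y_0$, suitably normalised, is a Daugavet-point (all points of $S_X$ are, as $X$ has the Daugavet property), so condition~(6) of Theorem~\ref{theo:carapuntodaugapreduL1} applied with base point $-\bar\omega x$ and reference $y_0$ furnishes a sequence $\{z_m^{**}\}\subseteq B_{X^{**}}$ with $\limsup\|{-\bar\omega x}-z_m^{**}\|=2$, equivalently $\limsup\|x+\omega z_m^{**}\|=2$, and with $e_m\mapsto z_m^{**}-y_0$ continuous from $c_0$ to $X^{**}$. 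Running the Aron--Berner argument of Proposition~\ref{prop:sufficient_polynomial_Daugavet} verbatim then yields, for some fixed large $m$, an element $w\in B_X$ (the polynomial-star limit approximant from \cite{dg}) with
$$
\re\,\omega P(w)>1-\alpha \quad\text{and}\quad \|x+\omega w\|>2-\varepsilon.
$$

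\emph{Second step (the operator part).} The above gives a point with large polynomial value lying in a Daugavet-type slice, but the polynomial WODP additionally requires this point to lie in $\OF(x_1,\dots,x_n;x',\varepsilon)$, i.e.\ to admit an operator $T$ with $\|T\|\le 1+\varepsilon$, $\|T(x_i)-x_i\|<\varepsilon$, and $\|T(y)-x'\|<\varepsilon$. Here I would invoke the ODP of $X$: the norming slice $S(B_X,g,\delta)$ in which $w$ approximately sits (coming from the condition $\|x+\omega w\|>2-\varepsilon$ rewritten as membership in a slice) contains, by the ODP, a point $y$ for which such a $T$ exists carrying $y$ to $x'$ while fixing the $x_i$ up to $\varepsilon$. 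The care needed is to choose the slice \emph{before} extracting the ODP point, and to arrange that the polynomial estimate $\re\,\omega P(y)>1-\alpha$ survives; this is done by requiring the slice to also be a witness for the polynomial value, which is legitimate because the polynomial Daugavet property of $X$ (Theorem~\ref{theorem:polyDPpreduL1}) already guarantees points of large polynomial value inside arbitrary Daugavet slices.

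\emph{Main obstacle.} The delicate point is \textbf{reconciling the two constructions}: condition~(6) of Theorem~\ref{theo:carapuntodaugapreduL1} lives in the bidual and is tailored to the polynomial part, whereas the ODP is a statement about operators on $X$ itself and about a single prescribed slice. I expect the cleanest route is to merge them at the level of slices: use the polynomial Daugavet property to locate a slice $S$ on which $\re\,\omega P$ is close to $1$ and which simultaneously forces $\|x+\omega(\cdot)\|$ large, then apply the ODP to that very slice to upgrade a point $y\in S$ to one lying in $\OF(x_1,\dots,x_n;x',\varepsilon)$. A bookkeeping of the $\varepsilon$'s and $\alpha$'s (replacing them by smaller quantities at the outset) then closes the argument, showing $y\in\OF(x_1,\dots,x_n;x',\varepsilon)$ with $\re\,\omega P(y)>1-\alpha$, which is exactly the polynomial WODP.
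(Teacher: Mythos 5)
There is a genuine gap at the point you yourself flag as the ``main obstacle'', and your proposed resolution does not close it. The polynomial WODP requires a \emph{single} point $y$ that simultaneously satisfies $\re\omega P(y)>1-\alpha$ and lies in $\OF(x_1,\dots,x_n;x',\varepsilon)$. Your plan is to get the second condition from the ODP, but the ODP is a statement about \emph{linear} slices $S(B_X,g,\delta)$: it produces one unspecified point $x\in S$ admitting the required operator. The set $\{y\in B_X\colon \re\omega P(y)>1-\alpha\}$ is not a slice (nor does it obviously contain one), so there is no slice you can feed to the ODP whose ODP-point is guaranteed to have large polynomial value; and conversely the point $w$ you produce in your first step via Theorem~\ref{theo:carapuntodaugapreduL1}(6) and the Aron--Berner argument need not be an ODP-point of any slice. ``Merging at the level of slices'' therefore has no mechanism behind it. (A secondary issue: your first step establishes $\|x+\omega w\|>2-\varepsilon$ for an $x$ that is never introduced in the data of the polynomial WODP; this norm condition is the polynomial \emph{Daugavet} property and is simply not the condition you need.)

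The paper's proof supplies exactly the missing ingredient. It first proves Lemma~\ref{lemma:Santos} (a polynomial version of the classical almost-$\ell_1$-orthogonality lemma): given the finite-dimensional subspace $E=\lin\{x_1,\dots,x_n\}$ and $P$, there is a point $y$ with $\re\omega P(y)>1-\alpha$ such that $\|e+\lambda y\|>(1-\varepsilon)(\|e\|+|\lambda|)$ for all $e\in E$, $\lambda\in\K$. This almost-isometric $\ell_1$-decomposition is what lets one \emph{define} the operator by hand: $T(e+\lambda y):=e+\lambda x'$ is bounded by $\tfrac{1}{1-\varepsilon}$ on $E\oplus\K y$, and the $L_1$-predual hypothesis is then used not through the ODP but through the Lindenstrauss--Hustad extension theorem, which extends $T$ to all of $X$ with norm at most $\tfrac{1+\varepsilon}{1-\varepsilon}$. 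That gives $T(x_i)=x_i$ exactly and $T(y)=x'$ exactly, so $y\in\OF(x_1,\dots,x_n;x',\tfrac{2\varepsilon}{1-\varepsilon})$ \emph{for the very same $y$} that carries the large polynomial value. If you want to repair your write-up, replace the ODP step by this explicit construction and prove (or cite) the almost-orthogonality lemma; without it, even a direct definition of $T$ on $E\oplus\K y$ has no norm control.
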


We will need a technical result which follows from \cite[Proposition~2.3]{santos2020} in the real case and which can be adapted also for the complex case.

\begin{lemma}\label{lemma:Santos}
Let $X$ be a Banach space with the polynomial Daugavet property. Then, given a finite-dimensional subspace $F$ of $X$, a norm-one polynomial $P\in \mathcal P(X)$, and $\varepsilon>0$, $\alpha>0$, there exists a norm-one polynomial $Q\in\mathcal P(X)$ and $\alpha_1>0$ satisfying that:
\begin{enumerate}[(a)]
  \item the set $\{z\in B_X\colon \vert Q(z)\vert >1-\alpha_1\}$ is contained in $\{z\in B_X\colon \vert P(z)\vert >1-\alpha\}$;
  \item the inequality $$\Vert e+ \lambda x\Vert>(1-\varepsilon)\bigl(\Vert e\Vert+\vert \lambda\vert\bigr)$$ holds for every $e\in F$, every $\lambda\in \mathbb K$, and every $x\in \{z\in B_X\colon \vert Q(z)\vert >1-\alpha_1\}$.
\end{enumerate}
\end{lemma}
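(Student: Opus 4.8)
The plan is to treat the real and complex scalar cases separately. For $\K=\R$ the statement is (a reformulation of) \cite[Proposition~2.3]{santos2020}, so I would simply invoke that result: from the polynomial Daugavet property it produces a norm-one $Q$ and $\alpha_1>0$ whose modulus-sublevel set $\{z\in B_X\colon |Q(z)|>1-\alpha_1\}$ is contained in $\{z\in B_X\colon |P(z)|>1-\alpha\}$ and consists of points that are almost $\ell_1$-orthogonal to $F$ in the sense of (b). The whole task then reduces to the complex case, and my strategy is to \emph{deduce} it from the real argument rather than redo the construction from scratch.

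The key observation for the complex case is that condition (b), although stated for all $\lambda\in\C$, is equivalent to its restriction to $\lambda\in\R$ precisely because $F$ is a complex-linear subspace. Indeed, fix a candidate point $x$ and suppose $\|e+t x\|>(1-\varepsilon)(\|e\|+|t|)$ holds for every $e\in F$ and every $t\in\R$. Given an arbitrary $\lambda\in\C\setminus\{0\}$, write $\lambda=|\lambda|\omega$ with $\omega\in\T$; multiplying inside the norm by $\bar\omega$ (which leaves the norm invariant) gives $\|e+\lambda x\|=\|\bar\omega e+|\lambda|x\|$, and since $\bar\omega e\in F$ with $\|\bar\omega e\|=\|e\|$, the real inequality applied to $e'=\bar\omega e$ and $t=|\lambda|$ yields exactly $\|e+\lambda x\|>(1-\varepsilon)(\|e\|+|\lambda|)$; the case $\lambda=0$ is trivial once $\|x\|>1-\varepsilon$, which can be arranged by choosing $\alpha_1$ small. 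Thus the phase is absorbed by $F$ and no uniformity over $\T$ has to be built into the sublevel set.

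With this reduction in hand, the plan for the complex case is to run the inductive refinement of \cite[Proposition~2.3]{santos2020} essentially verbatim, replacing its only use of scalar peaking by the complex reformulation of the polynomial Daugavet property recalled just after Definition~\ref{def:polynomialDaugavet}: for a norm-one polynomial $p$, a point $u\in S_X$, and $\eta>0$ one finds $y\in B_X$ and $\omega\in\T$ with $\re\omega p(y)>1-\eta$ (hence $|p(y)|>1-\eta$) and $\|u+\omega y\|>2-\eta$. First I would reduce (b), for real $\lambda$, to finitely many inequalities $\|e_j+t_j x\|>1-\varepsilon/2$ over a fixed finite net $\{(e_j,t_j)\}$ of the compact set $\{(e,t)\in F\times\R\colon\|e\|+|t|=1\}$ (the net depending only on $F$ and $\varepsilon$), so that the $\ell_1$-inequality for a single peaking point follows by continuity and homogeneity. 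Then the refinement step builds $Q$ from $P$, level by level against this net, so that $\{|Q|>1-\alpha_1\}$ both refines $\{|P|>1-\alpha\}$, giving (a), and forces every peaking point to satisfy the net inequalities with slack, giving the real form of (b), which the phase-absorption step upgrades to all $\lambda\in\C$.

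The main obstacle is the refinement construction itself: one must produce a \emph{single} polynomial $Q$ and a \emph{single} threshold $\alpha_1$ that work simultaneously for all peaking points and all finitely many test pairs, while keeping $Q$ a genuine bounded polynomial (so that the degrees and norms accumulated along the induction stay under control) and while the sublevel sets are taken in modulus $|Q|$ rather than in the real part that the complex reformulation naturally controls. A secondary technical point, needed in the degenerate case $\lambda=0$ of the reduction and implicitly throughout, is to guarantee that points with $|Q(z)|>1-\alpha_1$ have norm close to $1$; for homogeneous $Q$ this is automatic from $|Q(z)|\leq\|z\|^{\deg Q}$, and in general it is arranged by the construction. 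Once these bookkeeping issues are handled exactly as in \cite{santos2020}, the combination of the phase-absorption identity and the complex reformulation closes the complex case.
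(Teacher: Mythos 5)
Your surrounding architecture matches the paper's: lean on \cite[Proposition~2.3]{santos2020}, reduce (b) to unimodular (hence, by homogeneity, to all) $\lambda$ by rotating inside the complex subspace $F$ via $\Vert e+\lambda x\Vert=\Vert \bar\omega e+\vert\lambda\vert x\Vert$, and pass from a single direction to all of $F$ by a finite net on $S_F$. All of that is exactly how the paper finishes the argument.

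The genuine gap is the one step that actually requires an idea, which you name as ``the main obstacle'' and then defer: given $P$ and a \emph{single} $y_0\in S_X$, produce \emph{one} polynomial $Q$ and \emph{one} threshold $\alpha'$ such that \emph{every} $x$ with $\vert Q(x)\vert>1-\alpha'$ simultaneously satisfies $\vert P(x)\vert>1-\alpha$ and is almost aligned with $y_0$. The tool you propose to insert there --- the pointwise reformulation of the polynomial Daugavet property, which yields a single point $y$ with $\re\omega P(y)>1-\eta$ and $\Vert u+\omega y\Vert>2-\eta$ --- is not adequate: it produces one good point but gives no control over the entire peak set of any polynomial, so it cannot drive the ``level by level'' refinement. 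What the paper (and \cite{santos2020}) actually uses is the Daugavet equation for the rank-one polynomial $\phi(z):=P(z)y_0$ read through its adjoint $\phi^*\colon X^*\to\mathcal P(X)$: from $\Vert \Id^*+\phi^*\Vert=2$ one picks $y^*\in S_{X^*}$ with $\Vert y^*+y^*\circ\phi\Vert>2-\varepsilon'$ and sets $Q:=(y^*+y^*\circ\phi)/\Vert y^*+y^*\circ\phi\Vert$, $\alpha':=1-(2-\varepsilon')/\Vert y^*+y^*\circ\phi\Vert$. Then $\vert Q(x)\vert>1-\alpha'$ forces $2-\varepsilon'<\vert y^*(x)+P(x)y^*(y_0)\vert\leq 1+\vert P(x)\vert$, which yields both $\vert P(x)\vert>1-\varepsilon'$ and $\Vert x+P(x)y_0\Vert>2-\varepsilon'$ in one stroke; this works verbatim over $\C$, so no separate real/complex split is needed. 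Without this construction (or an equivalent), your induction against the net has nothing to iterate, and the claim that all peaking points of $Q$ satisfy the test inequalities ``with slack'' is unsupported.
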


\begin{proof}
Let us start proving the following:

\emph{Claim.}
For every $y_0\in S_X$, every norm-one polynomial $P\in \mathcal P(X)$, and $\alpha>0$, $\varepsilon>0$, there exist a norm-one polynomial $Q\in\mathcal P(X)$ and $\alpha'>0$ such that the set $\{z\in B_X\colon \vert Q(z)\vert>1-\alpha'\}$ is contained in $\{z\in B_X\colon \vert P(z)\vert>1-\alpha\}$ and satisfies that the inequality
$$
\left \Vert y_0+ \tfrac{\overline{P(x)}}{|P(x)|} x\right \Vert>2-\varepsilon
$$
holds for every $x\in B_X$ satisfying that $|Q(x)|>1-\alpha'$.

Notice that an inductive argument allows us to replace $y_0$ with any finite subset $\{y_1,\ldots, y_n\}\subseteq S_X$.

Indeed, define $\phi\colon X\longrightarrow X$ by $\phi(z):=P(z)y_0$ for every $z\in X$, which is a rank-one norm-one polynomial. Since $X$ has the polynomial Daugavet property, it follows that $2=\Vert \Id+\phi\Vert=\Vert \Id^*+\phi^*\Vert$, where $\phi^*\colon X^*\longrightarrow \mathcal P(X)$ is defined by $\phi^*(y^*):=y^*\circ \phi$ (see \cite{santos2020} for background). So, taking  $0<\varepsilon'<\min\{\varepsilon,\alpha\}>0$, we can find $y^*\in S_{X^*}$ such that
$$
\Vert y^*+y^*\circ \phi\Vert>2-\varepsilon'.
$$
Next, define
$$
Q:=\frac{y^*+y^*\circ \phi}{\Vert y^*+y^*\circ \phi\Vert} \quad \text{ and } \quad \alpha':=1-\frac{2-\varepsilon'}{\Vert y^*+y^*\circ \phi\Vert}.
$$
Let us prove that $Q$ and $\alpha'$ satisfies the required properties, following a similar argument to that of \cite[Theorem 2.2]{santos2020}. Pick $x\in B_X$ such that $\vert Q(x)\vert>1-\alpha'$. Then,
$$
2-\varepsilon'<\vert y^*(x)+P(x)y^*(y_0)\vert\leq 1+\vert P(x)y^*(y_0)\vert \leq 1+\vert P(x)\vert,
$$
from where it follows that $\vert P(x)\vert>1-\varepsilon' > 1-\alpha$. Moreover, such $x$ also satisfies that
$$
\Vert x+P(x)y_0\Vert \geq \vert y^*(x)+P(x)y^*(y_0)\vert >2-\varepsilon'
$$
and so,
$$
\left \Vert y_0+ \tfrac{\overline{P(x)}}{|P(x)|} x\right \Vert = \tfrac{1}{|P(x)|} \bigl\| x + P(x) y_0\bigr\| >2-\varepsilon' >  2 - \varepsilon,
$$
finishing the proof of the claim.

Now, if we take a $\delta$-net $A$ of $S_F$, for $\delta>0$ small enough, a standard argument (see the proofs of \cite[Proposition 2.3]{santos2020} or of \cite[Lemma~II.1.1]{lucking}) provide a norm-one polynomial $Q$ and $\alpha_1$ such that (a) is satisfied and so that the inequality
$$
\left\Vert y+\tfrac{\overline{P(x)}}{|P(x)|}x\right\Vert>(1-\varepsilon)(\Vert y\Vert+1)
$$
holds for every $x\in\{z\in B_X\colon \vert Q(z)\vert>1-\alpha_1\}$ and every $y\in F$. Being $Y$ a subspace, by just rotating $y$, we actually have that
$$
\left\Vert y+ x\right\Vert>(1-\varepsilon)(\Vert y\Vert+1).
$$
Using again that $Y$ is a subspace, we routinely get the desired inequality in (b).
\end{proof}

We are now ready to provide the pending proof.

\begin{proof}[Proof of Proposition~\ref{prop:WODPpolinomial-predual}]
Fix $x_1,\ldots, x_n\in S_X$, $x'\in B_X$, $\varepsilon>0$, $\alpha>0$, and $P\in \mathcal P(X)$ with $\|P\|=1$.
By Theorem~\ref{theorem:polyDPpreduL1} we get that $X$ has the polynomial Daugavet property. Hence, by Lemma~\ref{lemma:Santos} there exists an element $y\in B_X$ and $\omega\in \mathbb T$ with $\re \omega P(y)>1-\alpha$ and such that, denoting $E:=\lin\{x_1,\ldots, x_n\}$, we have that
$$
\Vert e+\lambda y\Vert>(1-\varepsilon)(\Vert e\Vert+\vert\lambda\vert)
$$
holds for every $e\in E$ and every $\lambda\in\mathbb K$. Define $T\colon E\oplus\mathbb K y\longrightarrow X$ by the equation
$$
T(e+\lambda y):=e+\lambda x'.
$$
Notice that
$$
\Vert T(e+\lambda y)\Vert =\Vert e+\lambda x'\Vert\leq \Vert e\Vert+\vert\lambda\vert\leq \frac{1}{1-\varepsilon}\Vert e+\lambda y\Vert,
$$
so $\Vert T\Vert\leq \frac{1}{1-\varepsilon}$. Since $X$ is an $L_1$-predual, $T$ can be extended to the whole of $X$ (still denoted by $T$) with norm $\Vert T\Vert\leq \frac{1+\varepsilon}{1-\varepsilon}$ (the real case follows from \cite[Theorem 6.1]{linds} and the complex case from \cite{hustad}, see \cite[p.\ 3]{lima}).

Since $T(x_i)=x_i$ and $T(y)=x'$, it follows that $y\in \OF\left(x_1,\ldots, x_n;y,\frac{2\varepsilon}{1-\varepsilon}\right)$. This, the arbitrariness of $\varepsilon>0$ and the fact that $\re \omega P(y)>1-\alpha$, show that $X$ has the polynomial WODP.
\end{proof}

The second family that we would like to present is the one of vector-valued $L_1$ spaces.

\begin{proposition}\label{prop:WODPpolinomial-L1vectorvalued}
Let $\mu$ be an atomless $\sigma$-finite positive measure and let $Y$ be a Banach space. Then, $L_1(\mu,Y)$ has the polynomial WODP.
\end{proposition}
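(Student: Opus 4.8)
The plan is to mimic the structure of the proof of Proposition~\ref{prop:WODPpolinomial-predual}, replacing the $L_1$-predual extension argument (which gave us the operator $T$ with small norm) by a direct construction of $T$ that is available in vector-valued $L_1$-spaces thanks to their atomless structure. The key point is that $L_1(\mu,Y)$ has the ODP (this is quoted in the paragraph after Definition~\ref{def:ODP}), so the ``operator side'' of the $\OF$-sets is essentially free; what remains is to produce, for a prescribed norm-one polynomial $P$, a point $y$ at which $\re\omega P(y)$ is close to $1$ while also lying in the relevant $\OF$-set. Concretely, given $x_1,\dots,x_n\in S_X$, $x'\in B_X$, $\alpha>0$, $\varepsilon>0$ and a norm-one polynomial $P\in\mathcal P(X)$ with $X=L_1(\mu,Y)$, I would first invoke that $L_1(\mu,Y)$ has the polynomial Daugavet property (for atomless $\mu$ this is known; it can be cited from the references listed after Definition~\ref{def:polynomialDaugavet}, e.g.\ it holds for $L_1(\mu)$ and its vector-valued versions). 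This lets me apply Lemma~\ref{lemma:Santos}.

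The heart of the argument is then identical in spirit to the predual case: set $E:=\lin\{x_1,\dots,x_n\}$, apply Lemma~\ref{lemma:Santos} to $E$, $P$, $\varepsilon$, $\alpha$ to obtain a point $y\in B_X$ and $\omega\in\T$ with $\re\omega P(y)>1-\alpha$ such that
\[
\Vert e+\lambda y\Vert>(1-\varepsilon)\bigl(\Vert e\Vert+|\lambda|\bigr)
\]
for all $e\in E$ and $\lambda\in\K$. As before this lets me define $T_0\colon E\oplus\K y\to X$ by $T_0(e+\lambda y):=e+\lambda x'$, which has norm at most $\tfrac{1}{1-\varepsilon}$ on this finite-dimensional domain. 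The only step where the proof genuinely diverges from Proposition~\ref{prop:WODPpolinomial-predual} is the extension of $T_0$ to all of $X$ with a controlled norm: for an $L_1$-predual one used Lindenstrauss' extension theorem, but $L_1(\mu,Y)$ is \emph{not} an $L_1$-predual, so that tool is unavailable and must be replaced.

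The natural replacement, and what I expect to be the main obstacle, is to use the atomlessness of $\mu$ directly. Here is the mechanism I would pursue. The map $y$ produced by Lemma~\ref{lemma:Santos} can be taken to be (close to) a normalised characteristic-type element, and in $L_1(\mu,Y)$ one can engineer the operator $T$ by ``acting on a small piece of the measure space''. More precisely, rather than extending an abstract finite-dimensional operator, I would exploit the fact that $L_1(\mu,Y)$ already satisfies the ODP to obtain an operator $T$ with $\Vert T\Vert\le 1+\varepsilon$, $\Vert T(x_i)-x_i\Vert<\varepsilon$ and $T$ sending a suitable point of a prescribed slice close to $x'$, and then only need the polynomial $P$ to be large at that point. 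Thus the cleanest route is to fuse the polynomial-Daugavet selection (via Lemma~\ref{lemma:Santos}, giving $y$ with $\re\omega P(y)>1-\alpha$ and the almost-orthogonality inequality) with the ODP-style construction of $T$: the inequality $\Vert e+\lambda y\Vert>(1-\varepsilon)(\Vert e\Vert+|\lambda|)$ guarantees that $E$ and $y$ sit in an almost-$\ell_1$ position, and in $L_1(\mu,Y)$ such an almost-$\ell_1$ family can be realised on essentially disjoint portions of the measure space (using atomlessness to split supports), on which one defines $T$ to be the identity on the $x_i$-part and to map $y$ to $x'$, extending by zero elsewhere.

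The delicate estimate will be verifying that this piecewise-defined $T$ has norm at most $1+\varepsilon$ globally: one must check that the $\ell_1$-decomposition coming from the disjointification is compatible with the bound on $T_0$, so that the extension does not inflate the norm beyond $\tfrac{1+\varepsilon}{1-\varepsilon}$. Once $T$ is in hand with $T(x_i)=x_i$ (up to $\varepsilon$) and $T(y)=x'$ (up to $\varepsilon$), one concludes exactly as in Proposition~\ref{prop:WODPpolinomial-predual} that $y\in\OF\!\bigl(x_1,\dots,x_n;x',\tfrac{c\varepsilon}{1-\varepsilon}\bigr)$ for an absolute constant $c$, and since $\re\omega P(y)>1-\alpha$ with $\varepsilon,\alpha$ arbitrary, this establishes the polynomial WODP for $L_1(\mu,Y)$. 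I expect the measure-theoretic disjointification and the accompanying norm bookkeeping to be the only real work; everything else is a transcription of the predual argument.
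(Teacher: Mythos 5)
Your high-level plan (find a point nearly norming $P$ that is positioned so that an operator fixing the $x_i$ and sending that point to $x'$ can be built with norm close to one) matches the paper's, but the route you propose has a genuine gap at exactly the step you flag as ``the only real work''. The almost-$\ell_1$ inequality $\Vert e+\lambda y\Vert>(1-\varepsilon)\bigl(\Vert e\Vert+|\lambda|\bigr)$ coming from Lemma~\ref{lemma:Santos} does \emph{not} imply that $y$ and the $x_i$ live on essentially disjoint portions of the measure space when $Y$ is a general Banach space: if $Y$ contains an isometric copy of $\ell_1^2$, the functions $f=(h,0)$ and $g=(0,h)$ have identical support yet are in exact $\ell_1$-position in $L_1(\mu,Y)$. (The pointwise computation that produces almost-disjointness of supports from almost-$\ell_1$ position in the scalar case breaks down for $Y$-valued functions.) Without disjointness of supports your piecewise definition of $T$, and in particular the bound $\Vert T\Vert\le 1+\varepsilon$, cannot be carried out; and ``extending by zero elsewhere'' would in any case not give $\Vert T(x_i)-x_i\Vert<\varepsilon$ unless each $x_i$ is concentrated where $T$ acts as the identity.

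The paper proceeds differently and avoids both problems. It does not use Lemma~\ref{lemma:Santos} here at all: instead it extracts from the proof of the polynomial Daugavet property for atomless $L_1(\mu,Y)$ (\cite[Theorem~3.3]{mmp}) the stronger fact that one can choose $g\in S_{L_1(\mu,Y)}$ and $\omega\in\T$ with $\re\omega P(g)>1-\alpha$ \emph{and} $\mu(\supp(g))<\delta$, where $\delta$ is chosen by uniform integrability of the finite family $\{x_1,\ldots,x_n\}$ so that $\int_A\Vert x_i\Vert<\varepsilon/2$ whenever $\mu(A)<\delta$. The operator is then written down explicitly: one approximates each $x_i$ by a simple function supported on finitely many pairwise disjoint sets $C_1,\ldots,C_t\subseteq\Omega\setminus\supp(g)$ and sets
$T(f)=\sum_{k=1}^t\bigl(\frac{1}{\mu(C_k)}\int_{C_k}f\,d\mu\bigr)\chi_{C_k}+\bigl(\int_{\supp(g)}\langle h(t),f(t)\rangle\,d\mu(t)\bigr)x'$
for a suitable $h\in S_{L_\infty(\mu,Y^*)}$ supported in $\supp(g)$ with $\re\langle h,g\rangle>1-\varepsilon$. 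Disjointness of the $C_k$ and of $\supp(g)$ gives $\Vert T\Vert\le 1$ at once, $T$ fixes the simple approximants of the $x_i$, and $\Vert T(g)-x'\Vert$ is small. If you want to salvage your approach, the missing ingredient is precisely this small-support selection of the polynomial-norming point, which does not follow from Lemma~\ref{lemma:Santos}.
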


\begin{proof}
Fix $x_1,\ldots, x_n\in S_{L_1(\mu,Y)}$, $x'\in B_{L_1(\mu,Y)}$, $\varepsilon>0$, $\alpha>0$, and $P\in \mathcal P(X)$ with $\|P\|=1$. From the finiteness of $\{x_1,\ldots, x_n\}$ and the fact that $\mu$ is atomless, we may find $\delta>0$ satisfying that
$$
A\in \Sigma,\ \mu(A)<\delta\ \ \Longrightarrow \ \ \int_A \Vert x_i\Vert<\frac{\varepsilon}{2}
$$
By the proof of \cite[Theorem~3.3]{mmp} there are $g\in S_{L_1(\mu,Y)}$ and $\omega\in \T$ satisfying
\begin{equation*}
\mu(\supp(g))<\delta \quad \text{ and } \quad \re \omega P(g) >1-\alpha.
\end{equation*}
Write $B:=\supp(g)$. As $L_\infty(\mu,Y^*)$ is norming for $L_1(\mu,Y)$ (because $L_1(\mu)^*=L_\infty(\mu)$ and simple functions are dense in $L_1(\mu,Y)$), we can find $h\in S_{L_\infty(\mu,Y^*)}$ such that
$$
\supp(h)\subseteq B\quad \text{and} \quad \re \langle h,g\rangle=\re \int_B \langle h(t),g(t)\rangle\, d\mu(t)>1-\varepsilon.
$$
By using again the denseness of simple functions, and taking into account that $\int_{B}\vert x_i\vert<\frac{\varepsilon}{2}$, we can find pairwise disjoint sets $C_1,\ldots, C_t\in \Sigma$ with positive and finite measure, all of them included in $\Omega\setminus B$, and $a_i^j\in Y$, $i\in\{1,\ldots, n\}$, $j\in \{1,\ldots, t\}$, such that $x_i':=\sum_{k=1}^t a_i^k \chi_{C_k}$ satisfies
$$
\Vert x_i-x_i'\Vert<\frac{\varepsilon}{2}.
$$
Define now $T\colon L_1(\mu,Y)\longrightarrow L_1(\mu,Y)$ by the equation
$$
T(f):=\sum_{k=1}^t\left(\frac{1}{\mu(C_k)}\int_{C_k} f\ d\mu \right)\chi_{C_k}+ \left(\int_{B} \langle h(t),f(t) \rangle\, d\mu(t)\right) x'.
$$
It is not difficult to see that $\Vert T\Vert\leq 1$ and that $T(x_i')=x_i'$, so
$$
\Vert T(x_i)-x_i\Vert\leq \Vert T(x_i-x_i')\Vert+\Vert x_i'-x_i\Vert<\varepsilon.
$$
Also, since $C_i\cap B=\emptyset$ and $\supp(g)=B$, we get
$$
\Vert T(g)-x'\Vert\leq \left\vert 1-\int_B \langle h(t),g(t)\rangle\, d\mu(t)\right\vert \Vert x'\Vert<\sqrt{2\varepsilon}.
$$
This concludes the proof.
\end{proof}

Now we are ready to establish the following result which, together with Propositions \ref{prop:WODPpolinomial-predual} and \ref{prop:WODPpolinomial-L1vectorvalued}, provides the promised proof of Theorem~\ref{theo:tensorsymejemplos}.

\begin{theorem}\label{theo:polynomialWODPimpliesptensNWODP}
Let $X$ be a Banach space with the polynomial WODP and let $N\in \mathbb{N}$. Then, $\ptensN X$ has the WODP and so, the Daugavet property.
\end{theorem}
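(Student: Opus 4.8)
The plan is to verify the geometric form of the WODP for $Z:=\ptensN X$ directly, producing the required slice point and operator as a symmetric tensor power $T^{\otimes N}$ of an operator $T$ on $X$ handed to us by the polynomial WODP of $X$. First I would fix the dictionary: $B_Z=\acconv\bigl(\{x^N\colon x\in S_X\}\bigr)$, the dual is $\mathcal P(^N X)$ with duality $\langle P,x^N\rangle=P(x)$, and so every slice of $B_Z$ has the form $S=\{u\in B_Z\colon \re P(u)>1-\alpha\}$ for a norm-one $N$-homogeneous polynomial $P$. Since $\acconv(A)=\conv(\T\!\cdot\! A)$ we have $B_Z=\cconv\bigl(\{\theta\,x^N\colon\theta\in\T,\,x\in S_X\}\bigr)$, so by a density argument (and because it suffices to verify the WODP for $z'$ ranging over a dense subset of $B_Z$) I may assume that the data are finite convex combinations $z_i=\sum_j\lambda_{ij}\theta_{ij}a_{ij}^N$ and $z'=\sum_k\mu_k\theta_k'(x_k')^N$, with $a_{ij},x_k'\in S_X$, unimodular $\theta_{ij},\theta_k'$, and nonnegative weights summing to $1$. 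In the complex case each phase is absorbed into its base point through an $N$-th root, so one may even take all phases equal to $1$.

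Second, I would record the two analytic facts about the functor $T\mapsto T^{\otimes N}$: the induced map satisfies $T^{\otimes N}(x^N)=(Tx)^N$ and $\|T^{\otimes N}\|\le\|T\|^N$ (the estimate is immediate from the definition of the projective symmetric norm), together with the elementary bound $\|u^N-v^N\|\le N\max(\|u\|,\|v\|)^{N-1}\|u-v\|$ in $Z$ for $u,v\in X$ (telescoping the symmetric product). Granting a single operator $T$ with $\|T\|\le 1+\varepsilon$ that approximately fixes all the base points $a_{ij}$ and sends suitably chosen $y_k\in S_X$ close to the targets $x_k'$, these facts give at once that $\Phi:=T^{\otimes N}$ satisfies $\|\Phi(z_i)-z_i\|$ small for each $i$, so $\Phi$ will be the desired operator as soon as the slice point is built.

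Third, the operator $T$ and the points $y_k$ are produced by the polynomial WODP of $X$ applied to the slice polynomial $P$, to the family $\{a_{ij}\}$ as points to be fixed, and to the targets $x_k'$. Iterating in the spirit of Lemma~\ref{lema:finislicesWODP}, composing the operators obtained at each stage, yields one operator $T$ with $\|T\|\le 1+\varepsilon$ (after shrinking $\varepsilon$ to absorb the finitely many compositions), $\|T(a_{ij})-a_{ij}\|<\varepsilon$ and $\|T(y_k)-x_k'\|<\varepsilon$, where for each $k$ one also obtains $\omega_k\in\T$ with $\re\omega_k P(y_k)>1-\alpha'$. The candidate slice point is then $z:=\sum_k\mu_k c_k\,y_k^N$ for unimodular coefficients $c_k$ to be determined, and one must check that $z\in B_Z$, that $\re P(z)>1-\alpha$, and that $\Phi(z)=\sum_k\mu_k c_k(Ty_k)^N$ lands within tolerance of $z'$; whenever this succeeds, $z\in S$ together with $\Phi$ witnesses the WODP of $Z$, which in turn forces the Daugavet property by Remark~\ref{remark:WODPimpliesDaugavet}.

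The hard part is exactly the phase bookkeeping hidden in the choice of the $c_k$. Because $P$ is $N$-homogeneous, replacing a base point $y_k$ by $\rho y_k$ multiplies \emph{both} $P(y_k^N)=P(y_k)$ and the image power $(Ty_k)^N$ by $\rho^N$; thus the phase that $P$ sees on the $k$-th block and the phase carried by the operator output on that block are rigidly coupled, and a single coefficient $c_k$ cannot simultaneously rotate $P(y_k)$ to be nearly real (needed to put $z$ in the slice) and keep $(Ty_k)^N$ aligned with $(x_k')^N$ (needed so that $\Phi(z)\approx z'$): choosing $c_k=\mu_k$ fixes the target but leaves $\re\sum_k\mu_k P(y_k)\approx\re\sum_k\mu_k\overline{\omega_k}$ uncontrolled, whereas choosing $c_k=\mu_k\omega_k$ fixes the slice but leaves $\Phi(z)\approx\sum_k\mu_k\omega_k(x_k')^N$ off by $\omega_k$. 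Decoupling these two phases is the genuine crux, and I expect to resolve it by exploiting the freedom in the absolutely convex decomposition of $z'$ (rotating the $x_k'$ by $N$-th roots of unimodular scalars) together with the homogeneity of $P$, treating the parity of $N$ separately—the restriction to odd $N$ in \cite[Proposition 5.3]{rtv} already signals that the even case needs extra care. Once this alignment is achieved the verification is routine, and combined with Theorem~\ref{theorem:WODP} it fits the pattern of the tensor-product stability results of this section.
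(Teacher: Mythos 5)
Your setup --- reduce to finite (absolutely) convex combinations, take $\Phi=T^{\otimes N}$ for a single operator $T$ produced by iterating the polynomial WODP in the spirit of Lemma~\ref{lema:finislicesWODP}, and control $\|\Phi(z_i)-z_i\|$ by telescoping --- is exactly the paper's (its Lemma~\ref{lema:finipolyslices} is precisely the iteration you describe, and the telescoping estimate is carried out in $X\pten\cdots\pten X$ and transferred to $\ptensN X$ at the cost of the polarisation constant $N^N/N!$, which your bound $\|u^N-v^N\|\le N\max(\|u\|,\|v\|)^{N-1}\|u-v\|$ omits; this is harmless). But the proposal stops at the decisive point. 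You correctly isolate the obstruction: the unimodular $\omega_k$ with $\re\omega_k P(y_k)>1-\alpha$ multiplies $P(y_k^N)$ and $(Ty_k)^N$ in lockstep, so no choice of coefficients $c_k$ makes $z=\sum_k\mu_kc_ky_k^N$ lie in $S$ and simultaneously makes $\Phi(z)\approx z'$. Declaring that you ``expect to resolve it by exploiting the freedom in the absolutely convex decomposition of $z'$'' is not a resolution, and the specific freedom you invoke is illusory: rotating $x_k'$ by an $N$-th root of unity leaves $(x_k')^N$ unchanged, so it cannot absorb a general $\omega_k$, while rotating by anything else changes $z'$. Since this alignment is the only genuinely nontrivial step of the whole argument, the proposal as it stands is an outline rather than a proof.

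For the record, the paper decouples the two phases at the level of the \emph{inputs} to the polynomial WODP, not of the output coefficients $c_k$. When $\K=\C$ or $N$ is odd, every unimodular scalar has an $N$-th root, so $B_{\ptensN X}=\cconv\bigl(\{x^N\colon x\in S_X\}\bigr)$, all coefficients $\mu_k$ may be taken nonnegative, and the unimodular factor is absorbed into the slice point itself: one arranges $\re P(y_k)>1-\alpha$ with $\omega_k=1$, whence $z=\sum_k\mu_ky_k^N\in S$ and $\Phi(z)\approx z'$ with no further bookkeeping. When $\K=\R$ and $N$ is even this fails, and the paper inserts a separate device (Lemma~\ref{lema:polypar}): replacing $P$ by $Q=\tfrac12\bigl(P+(x^*)^N\bigr)$ yields a subslice $S(B_Y,Q,\tfrac{\alpha}{2})\subseteq S$ on which the sign $\xi$ of any element $\xi y^N$ is forced to equal $+1$; the signs of the coefficients $\mu_k$ of $z'$ are then fed into the \emph{targets} of the WODP applications (one asks for $T(y_k)\approx\sign(\mu_k)x_k'$) rather than into the coefficients of $z=\sum_k|\mu_k|y_k^N$. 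Your instinct that the even case ``needs extra care'' is right, but the care consists of this auxiliary polynomial $Q$ and the sign-adjusted targets --- an idea that the proposal does not supply.
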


We will need the following result which can be proved by induction with a similar argument to the one of Lemma~\ref{lema:finislicesWODP}.

\begin{lemma}\label{lema:finipolyslices}
Let $X$ be a Banach space with the polynomial WODP. Then, for every $x_1,\ldots, x_n\in S_X$, every $\varepsilon>0$, every polynomials $P_1,\ldots, P_k\in S_{\mathcal P(X)}$ and every $x_1',\ldots, x_k'\in B_X$ we can find $y_j\in B_X$ and $\omega_j\in \T$ for $1\leq j\leq k$ and an operator $T\colon X\longrightarrow X$ with $\Vert T\Vert\leq 1+\varepsilon$ satisfying  that $$
\re \omega_j P(y_j)>1-\alpha\ \text{ and } \ \Vert T(y_j)-x_j'\Vert<\varepsilon \ \text{ for every $1\leq j\leq k$},
$$
and that
$$
\Vert T(x_i)-x_i\Vert<\varepsilon \ \text{ for every $1\leq i\leq n$}.
$$
\end{lemma}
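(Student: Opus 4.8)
The plan is to mimic the inductive proof of Lemma~\ref{lema:finislicesWODP}, replacing the slice-membership conditions ``$y_j\in S_j$'' there by the polynomial conditions ``$\re\omega_j P_j(y_j)>1-\alpha$'' here. The key structural observation that makes the same argument work is that both kinds of conditions are \emph{point conditions}: they constrain only the chosen points $y_j$ and not the operator, so they are unaffected when one later replaces an operator by a composition of operators. I interpret the statement with an additional given parameter $\alpha>0$ and read the polynomial requirement as $\re\omega_j P_j(y_j)>1-\alpha$.

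First I would dispose of the base case $k=1$, which is nothing but the defining property of the polynomial WODP (Definition~\ref{defi:polywodp}): applied to $P_1$, the vectors $x_1,\ldots,x_n$, the target $x_1'$, and the parameters $\alpha,\varepsilon$, it produces $\omega_1\in\T$ and $y_1\in\OF(x_1,\ldots,x_n;x_1',\varepsilon)$ with $\re\omega_1 P_1(y_1)>1-\alpha$; the operator witnessing membership of $y_1$ in $\OF(\cdots)$ is the desired $T$.

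For the inductive step, assume the statement for $k$ and fix the data for $k+1$. I would first apply the induction hypothesis to the polynomials $P_1,\ldots,P_k$, the targets $x_1',\ldots,x_k'$, and the \emph{enlarged} list of fixed vectors $x_1,\ldots,x_n,x_{k+1}'$ (adjoining $x_{k+1}'$ so that the operator produced also approximately fixes it, exactly as $y_{k+1}'$ was adjoined in Lemma~\ref{lema:finislicesWODP}). This yields $y_1,\ldots,y_k$, scalars $\omega_1,\ldots,\omega_k$ with $\re\omega_j P_j(y_j)>1-\alpha$, and an operator $T$ with $\Vert T\Vert\leq 1+\varepsilon$ satisfying $\Vert T(x_i)-x_i\Vert<\varepsilon$, $\Vert T(x_{k+1}')-x_{k+1}'\Vert<\varepsilon$, and $\Vert T(y_j)-x_j'\Vert<\varepsilon$. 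Next I would apply the polynomial WODP to $P_{k+1}$, with fixed list $x_1,\ldots,x_n,y_1,\ldots,y_k$ and target $x_{k+1}'$, obtaining $y_{k+1}$ and $\omega_{k+1}$ with $\re\omega_{k+1}P_{k+1}(y_{k+1})>1-\alpha$, together with an operator $G$ with $\Vert G\Vert\leq 1+\varepsilon$, $\Vert G(x_i)-x_i\Vert<\varepsilon$, $\Vert G(y_j)-y_j\Vert<\varepsilon$, and $\Vert G(y_{k+1})-x_{k+1}'\Vert<\varepsilon$.

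Finally I would set $\phi:=T\circ G$, so that $\Vert\phi\Vert\leq(1+\varepsilon)^2$, and estimate the three families of errors by inserting a telescoping term and using $\Vert T\Vert\leq 1+\varepsilon$, exactly as in Lemma~\ref{lema:finislicesWODP}: for $1\leq i\leq n$ one writes $\phi(x_i)-x_i=T(G(x_i)-x_i)+(T(x_i)-x_i)$ to get $\Vert\phi(x_i)-x_i\Vert<(2+\varepsilon)\varepsilon$; for $1\leq j\leq k$ one writes $\phi(y_j)-x_j'=T(G(y_j)-y_j)+(T(y_j)-x_j')$; and for $j=k+1$ one writes $\phi(y_{k+1})-x_{k+1}'=T(G(y_{k+1})-x_{k+1}')+(T(x_{k+1}')-x_{k+1}')$, using here precisely the extra condition that $T$ almost fixes $x_{k+1}'$. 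Crucially, the polynomial conditions $\re\omega_j P_j(y_j)>1-\alpha$ for $1\leq j\leq k+1$ are untouched by passing from $T,G$ to $\phi$, since they refer only to the points $y_j$, which are kept fixed. After replacing $\varepsilon$ by a suitably smaller value to absorb the factor $(2+\varepsilon)$ and the squaring of the norm bound, the operator $\phi$ and the data $y_j,\omega_j$ are the desired objects. The only point needing attention—rather than a genuine obstacle—is the bookkeeping of the enlarged fixed lists so that the composition estimates close; everything else is the triangle inequality.
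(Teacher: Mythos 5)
Your proof is correct and is essentially the paper's own argument: the paper gives no detailed proof, remarking only that Lemma~\ref{lema:finipolyslices} ``can be proved by induction with a similar argument to the one of Lemma~\ref{lema:finislicesWODP}'', and your induction---adjoining $x_{k+1}'$ to the fixed list, invoking the polynomial WODP once for $P_{k+1}$ with fixed vectors $x_1,\ldots,x_n,y_1,\ldots,y_k$, and composing $\phi=T\circ G$ with the telescoping estimates---is precisely that argument carried out. Your reading of the (misstated) lemma, quantifying $\alpha>0$ and replacing $P$ by $P_j$, is the intended one, and your key observation that the conditions $\re\omega_j P_j(y_j)>1-\alpha$ constrain only the points $y_j$ and are therefore untouched when operators are composed is exactly why the slice argument transfers.
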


We divide the proof in two cases: first, when either $\K=\C$ or $\K=\R$ and $N$ is odd; second, when $\K=\R$ and $N$ is even. Observe that the difference is whether we may find $N$-roots of every scalar or not.

Let us start with the first case: either $\K=\C$ or $\K=\R$ and $N$ is odd.

\begin{proof}[Proof of Theorem~\ref{theo:polynomialWODPimpliesptensNWODP} for either $\K=\C$ or $\K=\R$ and $N$ odd]$ $\newline
Let $Y:=\ptensN X$. Pick $z_1,\ldots, z_n\in B_Y$, $\varepsilon>0$, $z'\in B_Y$ and $S:=S(B_Y,P,\alpha)$, for a certain $P\in S_{\mathcal P(^N X)}$. By a density argument, we can assume that $$z_i:=\sum_{j=1}^{n_i} \lambda_{ij} x_{ij}^N \in \conv(S_X^N)\ \ \text{ and }\ \ z'=\sum_{k=1}^t \mu_k (x_k')^N\in \conv(S_X^N).
$$
By Lemma~\ref{lema:finipolyslices} we can find $y_k\in B_X$ with $\re P(y_k)>1-\alpha$ and $T\colon X\longrightarrow X$ with $\Vert T\Vert\leq 1+\varepsilon$ and such that $\Vert T(x_{ij})-x_{ij}\Vert<\varepsilon$ for every $i,j$ and such that $\Vert T(y_k)-x_k'\Vert<\varepsilon$ for every $k\in\{1,\ldots, t\}$. Define $z:=\sum_{k=1}^t \mu_k y_k^N\in B_Y$. First of all, notice that $z\in S$. Indeed,
$$\re P(z)=\sum_{k=1}^t \mu_k \re P(y_k^N)=\sum_{k=1}^t \mu_k \re P(y_k)>(1-\alpha)\sum_{k=1}^t \mu_k=1-\alpha.$$
Now, define $\phi:=T^N\colon Y\longrightarrow Y$ by the equation
$$\phi(a^N):=T(a)^N.$$
By \cite[P.10, Proposition. (6)]{flo} we get that $\Vert \phi\Vert=\Vert T\Vert^N<(1+\varepsilon)^N$. Let us estimate $\Vert \phi(z_i)-z_i\Vert$. Indeed, given $i\in\{1,\ldots, n\}$ we get
\[\begin{split}
    \Vert \phi(z_i)-z_i\Vert&=\left\Vert \sum_{k=1}^{N_i}\lambda_{ij}(T(x_{ij})^N-x_{ij}^N)\right\Vert \leq \sum_{j=1}^{N_i}\lambda_{ij}\Vert T(x_{ij})^N-x_{ij}^N\Vert\\
    & \leq \frac{N^N}{N!}\sum_{j=1}^{n_i}\lambda_{ij}\Vert T(x_{ij})^N-x_{ij}^N\Vert_{X\pten X\pten\ldots\pten X}.
\end{split}\]
where in the last inequality we have used the polarization constant (see \cite[P.11 Subsection 2.3]{flo}).
Now, fix $j\in\{1,\ldots, n_i\}$. Then, in $X\pten X\pten\cdots\pten X$, we get the following equality
\begin{align*}
T(x_{ij})^N-x_{ij}^N & =\sum_{k=1}^N T(x_{ij})^{N-k+1}x_{ij}^{k-1}-T(x_{ij})^{N-k}x_{ij}^k \\
& =\sum_{k=1}^N T(x_{ij})^{N-k}\otimes (T(x_{ij})-x_{ij})\otimes x_{ij}^k.
\end{align*}
So,
\[\begin{split}\left\Vert T(x_{ij})^N-x_{ij}^N\right\Vert&\leq \sum_{k=1}^N \Vert T(x_{ij})\Vert^{N-k} \Vert T(x_i)-x_i\Vert\Vert x_{ij}\Vert^k <\sum_{k=1}^N (1+\varepsilon)^{N-k}\varepsilon\\
& <\varepsilon \sum_{k=0}^N(1+\varepsilon)^k =\varepsilon\frac{1-(1+\varepsilon)^{N+1}}{-\varepsilon} =(1+\varepsilon)^{N+1}-1.
\end{split}\]
Puting all together, we get
$$
\Vert \phi(z_i)-z_i\Vert\leq \frac{N^N}{N!}\sum_{j=1}^{n_i} \lambda_{ij}((1+\varepsilon)^{N+1}-1)=\frac{N^N}{N!}((1+\varepsilon)^{N+1}-1).
$$
Similar estimates to the above ones prove also that
$$\Vert \phi(z)-z'\Vert<\frac{N^N}{N!}((1+\varepsilon)^{N+1}-1).$$
The arbitrariness of $\varepsilon>0$ gives that $Y$ has the WODP, as desired.
\end{proof}

For the case of an even number $N$ and $\mathbb K=\mathbb R$, the proof will be similar but a bit more delicate. Notice that, given a polynomial $P\in S_{\mathcal P(^N X)}$, it is not true, in contrast with the odd case, that $\sup\limits_{x\in S_X} P(x)=1$ and $\inf\limits_{x\in S_X} P(x)=-1$ but we can only guarantee that one of those condition meets (in other words, the set $S_X^N$ is not balanced in $\ptensN X$). This induces a technical difficulty because, given a slice $S=S(B_{\ptensN X},P,\alpha)$ and given $\xi x^N\in S$, for $x\in B_X$ and $\xi\in \{-1,1\}$, we will not be able to determine the sign of $\xi$. This difficultly will be overcome by taking a smaller slice using the following technical result.

\begin{lemma}\label{lema:polypar}
Let $X$ be a real Banach space and let $N$ be an even number. Take $P\in S_{\mathcal P(^N X)}$ and assume that $\sup\limits_{x\in S_X} P(x)=1$. Then, for every $\alpha>0$ there exists a polynomial $Q\in B_{\mathcal P(^N X)}$ with the following properties:
\begin{enumerate}
    \item $\Vert Q\Vert>1-\frac{\alpha}{2}$.
    \item If $\xi\in\{-1,1\}$ and $y\in B_X$ are so that $\xi Q(y)>1-\frac{\alpha}{2}$ then $\xi=1$.
    \item If $Q(y)>1-\frac{\alpha}{2}$ for $y\in B_X$ then $P(y)>1-\alpha$.
\end{enumerate}
\end{lemma}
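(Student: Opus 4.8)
The plan is to exploit the evenness of $N$ to manufacture a nonnegative degree-$N$ polynomial that peaks where $P$ is close to $1$, and then to perturb $P$ by it. Since $\sup_{x\in S_X}P(x)=1$, I would first fix $x_0\in S_X$ with $P(x_0)>1-\delta$ for a small $\delta>0$ to be adjusted later, together with a norming functional $x_0^*\in S_{X^*}$, i.e.\ $x_0^*(x_0)=1$. Setting $\ell(x):=\bigl(x_0^*(x)\bigr)^N$, this $\ell$ is an $N$-homogeneous polynomial with $\|\ell\|=\|x_0^*\|^N=1$ and $\ell(x_0)=1$; crucially, since $N$ is even we have $\ell(x)=|x_0^*(x)|^N\geq 0$ for every $x\in X$. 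This nonnegativity is the whole point of the construction, and it is exactly what fails in the odd case (which is why that case was treated separately). I will also assume $\alpha\leq 1$, which is the relevant range, since in the application the lemma is only used to pass to a smaller slice.

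Next I would set $Q:=(1-s)P+s\,\ell$ with $s:=\tfrac{\alpha}{2}$, so that $s\leq\tfrac12$ and $\|Q\|\leq(1-s)\|P\|+s\|\ell\|=1$, giving $Q\in B_{\mathcal P(^N X)}$. For (1), evaluating at $x_0$ yields $Q(x_0)=(1-s)P(x_0)+s\geq 1-\delta(1-s)>1-\tfrac{\alpha}{2}$ once $\delta<\tfrac{\alpha}{2}$, whence $\|Q\|>1-\tfrac{\alpha}{2}$. For (3), if $Q(y)>1-\tfrac{\alpha}{2}$ for some $y\in B_X$ then, using $\ell(y)\leq 1$, we obtain $(1-s)P(y)>1-\tfrac{\alpha}{2}-s\ell(y)\geq 1-\alpha$, so $P(y)>\tfrac{1-\alpha}{\,1-\alpha/2\,}\geq 1-\alpha$, as required.

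The heart of the matter is (2), and this is where the nonnegativity of $\ell$ does the work. For every $y\in B_X$ one has $Q(y)=(1-s)P(y)+s\ell(y)\geq(1-s)(-1)+0=-\bigl(1-\tfrac{\alpha}{2}\bigr)$, using $P(y)\geq -1$ and $\ell(y)\geq 0$. Consequently no $y\in B_X$ satisfies $-Q(y)>1-\tfrac{\alpha}{2}$, so the only sign $\xi\in\{-1,1\}$ for which $\xi Q(y)>1-\tfrac{\alpha}{2}$ can hold is $\xi=1$, which is precisely (2). The main obstacle is purely bookkeeping: the three prescribed thresholds (the two copies of $1-\tfrac{\alpha}{2}$ in (1) and (2), and the $1-\alpha$ in (3)) force the weight to satisfy $\tfrac{\alpha}{2}\leq s\leq\tfrac12$ simultaneously, which both pins down the choice $s=\tfrac{\alpha}{2}$ and explains the restriction to $\alpha\leq 1$. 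No deeper difficulty arises, since everything reduces to the elementary bound $P(y)\geq -1$ on $B_X$ together with the sign of $\ell$ coming from $N$ being even.
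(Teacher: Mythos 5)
Your proof is correct and follows essentially the same route as the paper's: both perturb $P$ by $(x^*)^N$ for a functional norming a point where $P$ is nearly $1$, exploiting that $(x^*)^N\geq 0$ because $N$ is even (the paper takes the midpoint $Q=\tfrac{1}{2}\bigl(P+(x^*)^N\bigr)$ rather than your weight $s=\alpha/2$, and obtains (2) from the inequality $\xi\, x^*(y)^N>1-\alpha$ instead of your global lower bound $Q\geq -(1-\tfrac{\alpha}{2})$ on $B_X$, but these are cosmetic differences). Your explicit restriction to $\alpha\leq 1$ is also implicitly present in the paper's argument and is harmless, since the lemma is only invoked for small slice parameters.
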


Note that if $\inf\limits_{x\in S_X} P(x)=-1$, an analogous statement holds making appropriate change of sings and order in (2) and (3).

\begin{proof}
Pick $x_0\in S_X$ such that $P(x_0)>1-\alpha$. Pick $x^*\in S_{X^*}$ so that $x^*(x_0)=1$ and define $Q:=\frac{P+(x^*)^N}{2}$. Notice that $Q\in B_{\mathcal P(^N X)}$ and that
$$Q(x_0)=\frac{P(x_0)+1}{2}>\frac{2-\alpha}{2}=1-\frac{\alpha}{2}$$
which proves (1). Moreover, if $\xi\in \{-1,1\}$ and $y\in B_X$ satisfies that $\xi Q(y)>1-\frac{\alpha}{2}$, then
$$
1-\frac{\alpha}{2}<\frac{\xi P(y)+\xi x^*(y)^N}{2}\leq \frac{1+\xi x^*(y)^N}{2}.
$$
It follows that $2-\alpha<1+\xi x^*(y)^N$, so $\xi x^*(y)^N>1-\alpha$.
Since $x^*(y)^N>1-\alpha$ because $N$ is even, we get that $\xi=1$ which proves (2). Finally, we get (3) by a simple convexity argument similar to the previously exposed.
\end{proof}

Now we are able to prove the remaining case.

\begin{proof}[Proof of Theorem~\ref{theo:polynomialWODPimpliesptensNWODP} for $\K=\R$ and $N$ even]$ $\newline
Let $Y:=\ptensN X$. Pick $z_1,\ldots, z_n\in B_Y$, $\varepsilon>0$, $z'\in B_Y$ and $S:=S(B_Y,P,\alpha)$, for a certain $P\in S_{\mathcal P(^N X)}$. By a density argument, for every $i$ we can assume that $z_i:=\sum_{j=1}^{n_i} \lambda_{ij} x_{ij}^N \in \aconv(S_X^N)$ with $\sum_{j=1}^{n_i}|\lambda_{ij}|=1$, and also that $z'=\sum_{k=1}^t \mu_k (x_k')^N\in \aconv(S_X^N)$ with $\sum_{k=1}^t\vert \mu_k\vert=1$. Pick a polynomial $Q\in B_{\mathcal P(^N X)}$ satisfying the thesis of Lemma~\ref{lema:polypar} and notice that $S(B_Y,Q,\frac{\alpha}{2})\subseteq S$.

By Lemma~\ref{lema:finipolyslices} we can find $y_k\in B_X$ with $Q(y_k)>1-\frac{\alpha}{2}$ (and so $y_k^N\in S$) and $T\colon X\longrightarrow X$ with $\Vert T\Vert\leq 1+\varepsilon$ and such that $\Vert T(x_{ij})-x_{ij}\Vert<\varepsilon$ for every $i,j$ and such that
$$\Vert T(\sign(\mu_k)y_k)-x_k'\Vert=\Vert T(y_k)-\sign(\mu_k)x_k'\Vert<\varepsilon$$
for every $k\in\{1,\ldots, t\}$. Define $z:=\sum_{k=1}^t \vert \mu_k\vert y_k^N\in B_Y$. First of all, notice that $z\in S$. Indeed,
$$Q(z)=\sum_{k=1}^t \vert \mu_k\vert  Q(y_k^N)=\sum_{k=1}^t \vert \mu_k\vert Q(y_k)>(1-\frac{\alpha}{2})\sum_{k=1}^t \vert \mu_k\vert=1-\frac{\alpha}{2}.$$
This implies that $z\in S(B_Y,Q,\frac{\alpha}{2})\subseteq S$. Now define $\phi:=T^N\colon Y\longrightarrow Y$ by the equation
$$\phi(a^N):=T(a)^N.$$
Similar estimates to the ones of the proof of Theorem~\ref{theo:polynomialWODPimpliesptensNWODP} for the case of $N$ odd, proves that
$$
\Vert \phi(z_i)-z_i\Vert<\frac{N^N}{N!}\bigl((1+\varepsilon)^{N+1}-1\bigr).
$$
Finally,
\[
\begin{split}
    \left\Vert \phi(z)-z'\right\Vert& =\left\Vert \sum_{k=1}^t \vert \mu_k\vert T(y_k)^N-\mu_k (x_k')^N\right\Vert\\
    & = \left\Vert \sum_{k=1}^t \mu_k \sign(\mu_k) T(y_k)^N-\mu_k (x_k')^N\right\Vert\\
    & = \left\Vert \sum_{k=1}^t \mu_k  (T(\sign(\mu_k) y_k)^N- (x_k')^N)\right\Vert\\
    & \leq \sum_{k=1}^t \vert \mu_k\vert \left\Vert T(\sign(\mu_k) y_k)^N- (x_k')^N\right\Vert
\end{split}
\]
Now, since $\sum_{k=1}^t \vert \mu_k\vert=1$, $\Vert T(\sign(\mu_k) y_k)-(x_k')^N\Vert<\varepsilon$ and from the estimates done in the proof of the case $N$ odd of Theorem~\ref{theo:polynomialWODPimpliesptensNWODP}, we get again that $$\Vert \phi(z)-z'\Vert<\frac{N^N}{N!}((1+\varepsilon)^{N+1}-1),$$ so we are done.
\end{proof}

\end{document}